\title{A Fra\"{i}ss\'{e} theory for partial orders of a fixed finite dimension}
\author[I. B. Smythe]{Iian B. Smythe}
\address{Department of Mathematics and Statistics, University of Winnipeg, 515 Portage Avenue, Winnipeg, Manitoba, Canada, R3B 2E9}
\email{i.smythe@uwinnipeg.ca}
\urladdr{www.iiansmythe.com}
\author[M. Threz]{Mithuna Threz}
\address{Department of Mathematics, University of Manitoba, Winnipeg, Manitoba, Canada, R3T 2N2}
\author[M. Wiebe]{Max Wiebe}
\address{Department of Mathematics, University of Manitoba, Winnipeg, Manitoba, Canada, R3T 2N2}
\date{\today}
\subjclass[2020]{06A07, 03C15, 05D10, 37B05} 
\thanks{This research was conducted as part of two undergraduate summer research projects under the supervision of the first author and supported by his NSERC Discovery Grant (RGPIN-2023-03343). The second author was also supported by two Undergraduate Summer Research Awards -- International from the University of Winnipeg. The authors gratefully acknowledge these organizations and their support for undergraduate research.}
\newtheorem{theorem}{Theorem}[section]
\newtheorem{corollary}[theorem]{Corollary}
\newtheorem{lemma}[theorem]{Lemma}
\newtheorem{proposition}[theorem]{Proposition}
\theoremstyle{definition}
\newtheorem{definition}[theorem]{Definition}
\newtheorem{example}[theorem]{Example}
\begin{document}

\begin{abstract}
    For each $n\geq 2$, we show that the class of all finite $n$-dimensional partial orders, when expanded with $n$ linear orders which realize the partial order, forms a Fra\"iss\'e class and identify its Fra\"iss\'e limit $(D_n,<,<_1,\ldots,<_n)$. We give a finite axiomatization of this limit which specifies it uniquely up to isomorphism among countable structures, show that its class of finite substructures satisfies the Ramsey property, and conclude, by the Kechris--Pestov--Todor\v{c}evi\'{c} correspondence, that the automorphism group of the limit is extremely amenable. We then identify the universal minimal flow of the automorphism group of the reduct $(D_n,<)$. Similar results are established for the $n$-dimensional rational grid $(\mathbb{Q}^n,<)$ and its expansion by the coordinate orders.
\end{abstract}

\maketitle

\section*{Introduction}

In his 1895 \textit{Beitr\"age} \cite{Cantor1895}, Cantor proved that the set $\mathbb{Q}$ of rational numbers, with its usual ordering, is the unique countable dense linear order without endpoints, up to isomorphism. The most well-known proof of this fact (e.g.,~in \cite{MR1697766}) is by a ``back-and-forth'' argument:\footnote{While the ``back-and-forth'' argument is often credited to Cantor, his original proof uses only the ``forth'' direction; see \cite{MR1281169} for the history of this proof.} Given a countable dense linear order without endpoints $(L,<)$, we enumerate both $L$ and $\mathbb{Q}$, and construct an isomorphism from $(L,<)$ to $(\mathbb{Q},<)$ by building an increasing chain of isomorphisms between finite substructures. To extend the domain of our isomorphism to the next enumerated point of $L$, we find a corresponding point in $\mathbb{Q}$ (going ``forth''), and to extend the range to the next enumerated point of $\mathbb{Q}$, we find a corresponding point in $L$ (going ``back''). Density and the lack of endpoints ensure that these extensions are always possible.

Fra\"iss\'e \cite{MR0069239} gave a new interpretation of Cantor's result by showing that $(\mathbb{Q},<)$ can be viewed as the ``limit'' of its class of finite substructures, namely the finite linear orderings. More generally, Fra\"iss\'e showed that whenever a class of finite structures satisfies certain properties, all of its members embed into a unique countable ultrahomogeneous structure, up to isomorphism, now called its \emph{Fra\"iss\'e limit}.

This line of research connects to topological dynamics via the study of automorphism groups of countable structures. The most remarkable such connection, established by Kechris, Pestov, and Todor\v{c}evi\'{c} in \cite{MR2140630}, says that a certain strong fixed-point property, \emph{extreme amenability}, of the automorphism group of a Fra\"iss\'e limit of a class of finite structures endowed with a linear ordering, is equivalent to a finite combinatorial \emph{Ramsey property} for that class. For example, the classical form of Ramsey's Theorem \cite{MR1576401} can be viewed as saying that the Ramsey property holds for the class of finite linear orders and thus, via this correspondence, the automorphism group of $(\mathbb{Q},<)$ is extremely amenable, a result original proved by Pestov \cite{MR1608494}. In cases where the underlying class of finite structures does not have the Ramsey property, but can be expanded to one which does, techniques from \cite{MR2140630} can be used to identify the largest irreducible action, or \emph{universal minimal flow}, of the automorphism group of its Fra\"iss\'e limit.

The goal of the present article is to establish generalizations of the above results to partial orders of an arbitrary fixed finite dimension.

The notion of \emph{dimension} for a partial order $(P,<)$ was defined by Dushnik and Miller in \cite{MR0004862} to be the minimum number of linear orders on $P$ whose intersection is $<$. The prototypical examples of $n$-dimensional partial orders are $n$-fold direct products of linear orders. Consequently, every countable $n$-dimensional partial order embeds into the rational grid $(\mathbb{Q}^n,<)$. We will focus particular attention on certain topologically dense subsets $D_n$ of $\mathbb{Q}^n$, isolated by Manaster and Remmel in \cite{MR0641492}.

The \textit{prima facie} difficulty in attempting to develop a Fra\"iss\'e theory for $n$-dimensional partial orders is that, when $n\geq 2$, they do not form a Fra\"iss\'e class (Proposition \ref{prop:PO_n_not_AP}), nor are the natural ``limit'' objects $(\mathbb{Q}^n,<)$ or $(D_n,<)$ ultrahomogeneous (Proposition \ref{prop:D_n_not_hom}). The solution, as we will see, is to expand these structures by the $n$ linear orders (or ``realizers'') in the case of $D_n$, or $n$ weak linear orders (or ``weak realizers'') in the case of $\mathbb{Q}^n$, which intersect to give the underlying partial order.

This article is arranged as follows: We begin by reviewing partial orders and dimension in Section \ref{sec:posets}, and then Fra\"iss\'e theory in Section \ref{sec:Fraisse}. We show, in Section \ref{sec:limit}, that the class of finite $n$-dimensional partial orders with realizers is a Fra\"iss\'e class with Fra\"iss\'e limit $(D_n,<,<_1,\ldots,<_n)$, where $<_1,\ldots,<_n$ are the $n$ coordinate orderings inherited from $\mathbb{Q}^n$ (Theorem \ref{thm:D_n_Fraisse}). In Section \ref{sec:axioms}, we give a finite axiomatization of the first-order theory of this Fra\"iss\'e limit which specifies it uniquely up to isomorphism among countable structures (Theorem \ref{thm:axiomatization}). In Section \ref{sec:Ramsey}, we show, as a consequence of the Product Ramsey Theorem of Graham, Rothschild, and Spencer \cite{MR3288500}, that the class of finite $n$-dimensional partial orders with realizers satisfies the Ramsey property (Theorem \ref{thm:PO_n_RP}) and conclude that the automorphism group of $(D_n,<,<_1\ldots,<_n)$ is extremely amenable (Corollary \ref{cor:EA}). In Section \ref{sec:uni_min_flow}, we identify the universal minimal flow of the automorphism group of $(D_n,<)$ as its natural action on the (finite) set of realizers for the underlying partial order (Theorem \ref{thm:UMF}). Finally, in Section \ref{sec:Qn}, we carry out a similar analysis for $(\mathbb{Q}^n,\leq)$ and its expansion by the (weak) coordinate orders $\leq_1,\ldots,\leq_n$.

Before proceeding further, we note that some of our results, namely the fact that the finite $n$-dimensional partial orders with realizers form a Fra\"iss\'e class and have the Ramsey property, are equivalent to results first proved by Bodirsky \cite{MR3210656} and Soki\'c \cite{MR3047085}, a fact of which we only became aware after establishing them on our own. However, in neither \cite{MR3210656} nor \cite{MR3047085} is the Fra\"iss\'e limit of this class explicitly described, and our proof of the Ramsey property appears to be both simpler and more direct. Our other results, in particular those concerning universal minimal flows, are, to the best our knowledge, completely new.

\section{Partial orders and dimension}\label{sec:posets}

By a \emph{partial order} we will mean both an irreflexive and transitive relation $<$ on a set $P$, as well as the structure $(P,<)$. Elements $a,b\in P$ are \emph{comparable} if $a< b$, $a=b$, or $b< a$; otherwise, they are \emph{incomparable}. A subset of $P$ consisting of pairwise incomparable elements is called an \emph{antichain}. A \emph{linear order} is a partial order in which every pair of elements is comparable. We will write $\leq$ for the non-strict order corresponding to $<$, where $a\leq b$ if and only if $a=b$ or $a<b$. We caution readers that we will often use the same symbols (e.g.,~$<$ or $<_i$) for different orders on different sets at the same time, however their individual meanings should be clear from context.

Given partial orders $(P,<)$ and $(Q,<)$, an \emph{embedding} of $(P,<)$ into $(Q,<)$ is an injective function $f:P\to Q$ such that
\[
    a<b \iff f(a)<f(b)
\]
for all $a,b\in P$. A bijective embedding from $(P,<)$ onto $(Q,<)$ is an \emph{isomorphism} and if such an isomorphism exists, we say that $(P,<)$ and $(Q,<)$ are \emph{isomorphic}. 

There are several different ways to form the ``product'' of partial orders $(P,<)$ and $(Q,<)$. First, there is the \emph{direct product} $(P\times Q,<)$, endowed with the \emph{product order}:
\[
    (a,b)<(c,d) \iff \text{$a\leq c$, $b\leq d$, and $(a,b)\neq (c,d)$},
\]
for all $(a,b),(c,d)\in P\times Q$. This definition naturally generalizes to products of more than two partial orders and is linear only in trivial cases.

Second, we have the \emph{lexicographic product} $(P\times Q,<_{\mathrm{lex}})$, endowed with the \emph{lexicographic order}:
\[
    (a,b)<_{\mathrm{lex}}(c,d) \iff \begin{cases}a<c, \text{ or}\\a=c \text{ and } b<d,\end{cases}
\]
for all $(a,b),(c,d)\in P\times Q$. It will be convenient to refer to this as the \emph{1st lexicographic order} on $P\times Q$, and write it as $<_{\mathrm{lex}_1}$. We define the \emph{2nd lexicographic order} (or \emph{colexicographic order}) $<_{\mathrm{lex}_2}$ on $P\times Q$ similarly, by considering the 2nd coordinate first. 

Given partial orders $(P_1,<),\ldots,(P_n,<)$, we will use boldface $\mathbf{a}$, $\mathbf{b}$, $\mathbf{c}$, to denote elements of the product $P_1\times\cdots\times P_n$, and write $a_i$, $b_i$, $c_i$, respectively, for their $i$th coordinates in $P_i$. For each $i\leq n$, we define the \emph{$i$th lexicographic order} $<_{\mathrm{lex}_i}$ on $P_1\times\cdots\times P_n$ by:
\[
    \mathbf{a}<_{\mathrm{lex}_i}\mathbf{b} \iff 
    \begin{cases}
        a_{\sigma_i(1)}<b_{\sigma_i(1)},\text{ or}\\
        a_{\sigma_i(1)}=b_{\sigma_i(1)} \text{ and } a_{\sigma_i(2)}<b_{\sigma_i(2)},\text{ or}\\
        \vdots\\
        \forall j<n (a_{\sigma_i(j)}=b_{\sigma_i(j)}) \text{ and } a_{\sigma_i(n)}<b_{\sigma_i(n)},
    \end{cases}
\]
where $\sigma_i$ is the cyclic permutation of $\{1,\ldots,n\}$ given by $\sigma_i(1)=i$, $\sigma_i(2)=i+1\bmod(n+1)$, $\sigma_i(3)=i+2\bmod(n+1)$, and so on.\footnote{The particular choice of permutation here is somewhat arbitrary, so long as $\sigma_i(1)=i$.} 
In particular,
\[
    \mathbf{a}<_{\mathrm{lex}_i}\mathbf{b} \implies a_i\leq b_i.
\]
Note that a lexicographic product of linear orders is always linear.

Given a partial order $(P,<)$, linear orders $<_1,\ldots,<_n$ on $P$ \emph{realize} $<$ if
\[
    <\;=\;<_1\cap\cdots\cap <_n,
\]
that is,
\[
    a<b \iff   a<_i b\text{ for all $i\leq n$,}
\]
for all $a,b\in P$. The \emph{dimension} of $(P,<)$, written $\dim(P,<)$, is the minimum number of linear orders on $P$ required to realize $<$. In particular, linear orders are exactly the partial orders of dimension $1$. The following fact, proved by a standard application of Zorn's Lemma, implies that (possibly infinite cardinal) dimension is well-defined for any partial order:

\begin{lemma}[Szpilrajn \cite{zbMATH02566488}]\label{lem:szpilrajn}
    Every partial order $<$ on $P$ can be extended to a linear order on $P$. Moreover, if $a,b\in P$ are incomparable, then there is a linear extension $\prec$ of $<$ on $P$ such that $a\prec b$.
\end{lemma}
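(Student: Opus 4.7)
The plan is to prove Szpilrajn's Lemma in three stages: a \emph{one-step extension} claim, an application of Zorn's Lemma to a family of extensions, and a maximality argument forcing linearity. The moreover clause will fall out automatically by performing the one-step extension on the pair $(a,b)$ before invoking Zorn.

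First I would establish the key combinatorial step: if $(P,<)$ is a partial order and $a,b\in P$ are incomparable, then the relation $<'$ obtained as the transitive closure of $<\cup\{(a,b)\}$ is itself a partial order on $P$ extending $<$ with $a<'b$. Transitivity and containment of $<$ are immediate, so the content is irreflexivity. Unraveling the closure, any $<'$-chain can use the new pair $(a,b)$ at most once, since two uses would require a $<$-chain from $b$ back to $a$ in between, contradicting incomparability. Hence every new relation $x<'y$ has the form $x\leq a$ followed by $a<'b$ followed by $b\leq y$. A hypothetical $c<'c$ would then force $b\leq c\leq a$, giving $b\leq a$ with $a\neq b$, and so $b<a$ — again contradicting incomparability. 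I expect this irreflexivity verification to be the main (and essentially only) obstacle, since it is the unique place where the incomparability hypothesis is used.

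Next, let $\mathcal{F}$ denote the family of all partial orders on $P$ extending $<$ (or, for the moreover clause, extending the relation $<'$ produced in the previous step), ordered by inclusion as subsets of $P\times P$. Every chain in $\mathcal{F}$ has its union as an upper bound in $\mathcal{F}$: irreflexivity is inherited from any single member of the chain, while transitivity follows because any two witnessing pairs already lie in some common element of the chain. Zorn's Lemma then produces a maximal element $\prec$ of $\mathcal{F}$.

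Finally, I would show that $\prec$ must be linear. If two elements $c,d\in P$ were $\prec$-incomparable, the one-step extension claim applied to $(P,\prec)$ would yield a strictly larger partial order still extending $<$, violating the maximality of $\prec$. Hence $\prec$ linearly orders $P$ and extends $<$, and in the moreover case the initial step guarantees $a\prec b$, completing the proof.
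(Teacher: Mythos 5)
Your proof is correct and is precisely the ``standard application of Zorn's Lemma'' that the paper alludes to; the paper itself cites Szpilrajn and does not reproduce a proof. Your one-step extension via the transitive closure of $<\cup\{(a,b)\}$, including the irreflexivity check that uses incomparability of $a$ and $b$, is sound, and deriving the moreover clause by performing that step first is the standard route.
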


We will focus on those partial orders having finite dimension. It is easy to see that every finite partial order $(P,<)$ has finite dimension, and in fact, provided $|P|\geq 4$,
\begin{equation}\label{eqn:hiraguchi}
    \dim(P,<)\leq \left\lfloor{\frac{|P|}{2}}\right\rfloor,
\end{equation}
a sharp bound due to Hiraguchi \cite{MR0070681}. 

We abuse terminology slightly and call partial orders of dimension $\leq\!n$ simply \emph{$n$-dimensional}, and denote by $\mathcal{PO}_n$ the \emph{class of all finite $n$-dimensional partial orders}.

\begin{example}\label{ex:crown}
    Komm \cite{MR25541} showed that for any $n\in\mathbb{N}$, the dimension of $(\mathcal{P}(\{1,\ldots,n\}),\subset)$, where $\subset$ is the proper subset relation on the power set of $\{1,\ldots,n\}$, is exactly $n$. In fact, if we just consider singletons and their complements in $\{1,\ldots,n\}$, this already has dimension $n$, as observed in \cite{MR0070681}. We can view the latter as consisting of two disjoint antichains, $a_1,\ldots,a_n$ and $b_1,\ldots,b_n$, with $b_i<a_j$ if and only if $i\neq j$; we call this partial order the \emph{$n$-crown} and denote it by $(C_n,<)$. By Hiraguchi's inequality (\ref{eqn:hiraguchi}), $(C_n,<)$ is a minimal example of a partial order having dimension $n$.
    \begin{figure}[ht]
    \scalebox{0.75}{
    \begin{tikzpicture}
        \tikzstyle{node_style_a}=[shape=circle,very thin, draw=black]
        \tikzstyle{node_style_b}=[shape=circle,very thin, draw=black, fill=gray]
        \node[node_style_a] (a1) at (1,1) {$a_1$};
        \node[node_style_a,right= of a1] (a2) {$a_2$};
        \node[node_style_a,right=of a2] (a3) {$a_3$};
        \node [node_style_b,above=of a1] (b1) {$b_1$};
        \node [node_style_b,right=of b1] (b2) {$b_2$};
        \node [node_style_b,right=of b2] (b3) {$b_3$};
        \draw (a1) -- (b2) (a1) -- (b3); 
        \draw (a2) -- (b1) (a2) -- (b3); 
        \draw (a3) -- (b1) (a3) -- (b2); 
    \end{tikzpicture}}
    \caption{A Hasse diagram for $(C_3,<)$.}
    \end{figure}
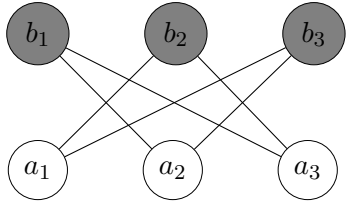
\end{example}

We note that, for any $n\geq2$, there is no finite list of ``forbidden'' partial orders with respect to the property of being $n$-dimensional. In fact, the class of $n$-dimensional partial orders is not even finitely axiomatizable in the first-order language of partial orders \cite{MR0300944}.

If $(L_1,<),\ldots,(L_n,<)$ are linear orders, $<$ their product order, and $<_{\mathrm{lex}_1},\ldots,<_{\mathrm{lex}_n}$ the lexicographic orders on $L_1\times\cdots\times L_n$ defined above, then $\mathbf{a}<\mathbf{b}$ if and only if $\mathbf{a}<_{\mathrm{lex}_i}\mathbf{b}$ for all $i\leq n$, and so $<_{\mathrm{lex}_1},\ldots,<_{\mathrm{lex}_n}$ realize the product order. This shows that $(L_1\times\cdots\times L_n,<)$, and thus any suborder, is $n$-dimensional. Conversely, if $(P,<)$ is $n$-dimensional, say with realizers $<_1,\ldots,<_n$, then the mapping $p\mapsto(p,\ldots,p)$ is an embedding of $(P,<)$ into the product of $(P,<_1),\ldots,(P,<_n)$ with the product order. This discussion proves the following alternate characterization of dimension:

\begin{theorem}[Theorem 10.4.2 in \cite{MR0150753}]\label{thm:Ore}
    A partial order $(P,<)$ is $n$-dimensional if and only if it embeds into an $n$-fold product of linear orders.\qed
\end{theorem}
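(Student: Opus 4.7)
The plan is to split the theorem into two implications, both of which are essentially laid out in the paragraph immediately preceding the statement; the task is to verify the assertions made there.

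For the ``if'' direction, I would first show that the $n$-fold product of linear orders $(L_1\times\cdots\times L_n,<)$ is itself $n$-dimensional. By the definitions given earlier, each lexicographic order $<_{\mathrm{lex}_i}$ on $L_1\times\cdots\times L_n$ is a linear order (being a lexicographic product of linear orders), and I would verify the equality $<\;=\;<_{\mathrm{lex}_1}\cap\cdots\cap<_{\mathrm{lex}_n}$ by proving both containments. The containment $<\;\subseteq\;<_{\mathrm{lex}_i}$ for each $i$ is immediate from the definitions: if $\mathbf{a}<\mathbf{b}$ in the product order, then $a_j\leq b_j$ for all $j$ with some strict inequality, so $\mathbf{a}<_{\mathrm{lex}_i}\mathbf{b}$ after going to the first coordinate (under the cyclic permutation $\sigma_i$) where they differ. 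For the reverse containment, if $\mathbf{a}<_{\mathrm{lex}_i}\mathbf{b}$ for every $i$, then in particular $a_i\leq b_i$ for every $i$ (by the implication noted just after the definition of $<_{\mathrm{lex}_i}$), and since the $<_{\mathrm{lex}_i}$'s are strict, $\mathbf{a}\neq\mathbf{b}$, giving $\mathbf{a}<\mathbf{b}$ in the product order. Then, since any suborder of an $n$-dimensional partial order is $n$-dimensional (the restrictions of realizers to a subset remain realizers of the restricted order), any partial order embedding into $L_1\times\cdots\times L_n$ is $n$-dimensional.

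For the ``only if'' direction, suppose $(P,<)$ is $n$-dimensional with realizers $<_1,\ldots,<_n$. I would consider the diagonal map $\Delta:P\to P^n$ given by $p\mapsto(p,\ldots,p)$, viewed as a function into the direct product of the linear orders $(P,<_1),\ldots,(P,<_n)$. Injectivity is trivial, and preservation of the order relation reduces to a chain of equivalences: $a<b$ in $(P,<)$ iff $a<_i b$ for every $i\leq n$ (by the realizer property) iff $\Delta(a)<\Delta(b)$ in the product order (by definition of the product, since all coordinates are required to satisfy $\leq$ with at least one strict).

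I do not anticipate any real obstacle in this argument, as the definitions have been arranged so that both directions unfold immediately. The only point that warrants care is the verification that the $<_{\mathrm{lex}_i}$ genuinely realize the product order on $L_1\times\cdots\times L_n$, where the cyclic permutation $\sigma_i$ plays a role: one must be sure that requiring $\mathbf{a}<_{\mathrm{lex}_i}\mathbf{b}$ for every $i$ forces $a_j\leq b_j$ in \emph{every} coordinate $j$, which follows because the cyclic rotations ensure that each index $j\in\{1,\ldots,n\}$ appears as the leading coordinate $\sigma_i(1)=i$ for some $i$.
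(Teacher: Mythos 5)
Your proposal is correct and follows essentially the same route as the paper, which proves this theorem via the discussion immediately preceding it: the lexicographic orders $<_{\mathrm{lex}_1},\ldots,<_{\mathrm{lex}_n}$ realize the product order (giving the ``if'' direction, together with closure of $n$-dimensionality under suborders), and the diagonal map $p\mapsto(p,\ldots,p)$ into the product of the realizers gives the ``only if'' direction. Your added care about the role of $\sigma_i(1)=i$ in forcing $a_i\leq b_i$ is exactly the point the paper records as the displayed implication after the definition of $<_{\mathrm{lex}_i}$.
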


It follows from Theorem \ref{thm:Ore} that every countable $n$-dimensional partial order embeds into the product $(\mathbb{Q}^n,<)$. Given what we know about $(\mathbb{Q},<)$, this may suggest that $(\mathbb{Q}^n,<)$ is the natural ``limit'' of the class of all finite $n$-dimensional partial orders. However, the presence of points in $(\mathbb{Q}^n,<)$ which are \emph{colinear}, i.e.,~share a common coordinate, will prevent it from being the right limit object in our initial analysis (see Proposition \ref{prop:Qn_not_hom} below). Instead, we will consider subsets of $\mathbb{Q}^n$ which are topologically dense, with respect to the usual product topology on $\mathbb{Q}^n$, but such that no two distinct points are colinear. Manaster and Remmel isolated these sets in \cite{MR0641492} (see also Theorem 6A in \cite{MR560230}) and showed that if $D_n$ was such a subset of $\mathbb{Q}^n$, then $(D_n,<)$ is universal for countable $n$-dimensional partial orders and is the model companion of the theory of $n$-dimensional partial orderings. These sets are not explicitly constructed in \cite{MR0641492}, so we do so here for reference:

\begin{proposition}\label{prop:D_n_exists}
    For each $n\geq 2$, there exist sets $D_n\subseteq\mathbb{Q}^n$ which are topologically dense and such that no two distinct points are colinear.
\end{proposition}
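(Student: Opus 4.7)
The plan is to build $D_n$ by a straightforward recursive construction that meets a fixed countable basis for the product topology on $\mathbb{Q}^n$ one set at a time, while at each step avoiding finitely many ``forbidden'' coordinate values.

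First I would fix an enumeration $(B_k)_{k\in\mathbb{N}}$ of the collection of open boxes
\[
    (a_1,b_1)\times\cdots\times(a_n,b_n)\subseteq\mathbb{Q}^n
\]
with rational endpoints $a_i<b_i$; this is a countable basis for the product topology. I would then construct a sequence of points $\mathbf{p}_k\in\mathbb{Q}^n$ recursively as follows. Having chosen $\mathbf{p}_0,\ldots,\mathbf{p}_{k-1}$, let $F_i^{(k)}\subseteq\mathbb{Q}$ be the (finite) set of $i$th coordinates appearing among $\mathbf{p}_0,\ldots,\mathbf{p}_{k-1}$. Writing $B_k=(a_1,b_1)\times\cdots\times(a_n,b_n)$, for each $i\leq n$ the set $(a_i,b_i)\cap\mathbb{Q}$ is infinite while $F_i^{(k)}$ is finite, so we may pick
\[
    q_i\in\bigl((a_i,b_i)\cap\mathbb{Q}\bigr)\setminus F_i^{(k)}.
\]
Set $\mathbf{p}_k=(q_1,\ldots,q_n)\in B_k$. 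Finally, let $D_n=\{\mathbf{p}_k:k\in\mathbb{N}\}$.

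By construction, $\mathbf{p}_k\in B_k$ for every $k$, so $D_n$ meets every basic open set and is therefore topologically dense in $\mathbb{Q}^n$. Also by construction, for each $i\leq n$ and each $k$, the $i$th coordinate of $\mathbf{p}_k$ differs from the $i$th coordinate of every $\mathbf{p}_j$ with $j<k$, so no two distinct points of $D_n$ share a coordinate.

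I do not anticipate any serious obstacle: the construction relies only on the fact that each open interval in $\mathbb{Q}$ contains infinitely many rationals, so removing a finite ``forbidden'' set at each stage leaves plenty of room to choose the next coordinate. The only minor care is in making sure that the forbidden set at stage $k$ really is finite, which it is because we have only made finitely many choices so far.
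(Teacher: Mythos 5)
Your proof is correct and follows essentially the same strategy as the paper's: enumerate a countable basis for $\mathbb{Q}^n$ and recursively choose one point per basic open set while avoiding the finitely many coordinate values already used. The only cosmetic difference is that you select the coordinates one at a time (making the existence of a valid choice immediate), whereas the paper phrases this as removing finitely many hyperplanes from an open ball and noting that infinitely many points remain.
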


\begin{proof}
   Let $\{B_m:m\in\mathbb{N}\}$ enumerate all open balls with rational radii centered on points in $\mathbb{Q}^n$. We will construct $D_n$ recursively. 

   Begin by choosing $\mathbf{d}_0\in B_0$ arbitrarily. Observe that if we remove the $n$ hyperplanes consisting of all points in $\mathbb{Q}^n$ which share a coordinate with $\mathbf{d}_0$ from the open ball $B_1$, then there are still infinitely many points remaining in $B_1$; choose one of them as $\mathbf{d}_1$. Continue in this fashion. Having chosen $\mathbf{d}_0,\ldots,\mathbf{d}_m$, remove from $B_{m+1}$ the finitely hyperplanes sharing coordinates with any of $\mathbf{d}_0,\ldots,\mathbf{d}_m$, and choose a remaining point in $B_{m+1}$ as $\mathbf{d}_{m+1}$. Then, $D_n=\{\mathbf{d}_m:m\in\mathbb{N}\}$ clearly has the properties desired.
\end{proof}

This construction shows that there are, in fact, $2^{\aleph_0}$-many such subsets of $\mathbb{Q}^n$, as one can make many choices at each step. It will be a consequence of Theorem \ref{thm:axiomatization} below that it does not matter which one we choose, so for each $n\geq 2$, we will fix one such set $D_n$ throughout and write $\mathbf{D}_n$ for $(D_n,<)$.

If $<$ is the product order and $<_{\mathrm{lex}_1},\ldots,<_{\mathrm{lex}_n}$ the $n$ lexicographic orders on $D_n$ inherited from $\mathbb{Q}^n$, then as before, the latter realize the former. Moreover, since no two distinct points of $D_n$ are colinear, we have that
\[
    \mathbf{a}<_{\mathrm{lex}_i}\mathbf{b} \iff a_i<b_i,
\]
and so $<_{\mathrm{lex}_i}$ is just the $i$th coordinate order on $D_n$. For this reason, we will write $<_i$ for $<_{\mathrm{lex}_i}$ when referring to $D_n$, and let $\mathbf{D}_{n,<_1,\ldots,<_n}$ denote the resulting structure $(D_n,<,<_1,\ldots,<_n)$.

In \cite{MR560230}, Manaster and Rosenstein pointed out that $2$-dimensional partial orders are far more tractable, from a computability standpoint, when endowed with the two linear orders which realize the underlying partial order. We make a similar move here. That is, we will consider \emph{$n$-dimensional partial orders with realizers} as structures of the form $(P,<,<_1,\ldots,<_n)$, expanded by $n$ linear orders $<_1,\ldots,<_n$ which realize $<$, and denote by $\mathcal{PO}_{n,<_1,\ldots,<_n}$ the \emph{class of all finite $n$-dimensional partial orders with realizers}.

\section{Fra\"{i}ss\'{e} theory}\label{sec:Fraisse}

In this section, we review the basic notions from Fra\"iss\'e theory; a standard reference for this material is Chapter 7 of \cite{MR1221741}. We will confine our attention to structures whose \emph{signature} consists of finitely many binary relation symbols, say $R_0,R_1,\ldots,R_n$. A \emph{structure} in this signature is then an object of the form 
\[
    \mathbf{A}=(A,R_0^\mathbf{A},R_1^\mathbf{A},\ldots,R_n^{\mathbf{A}}),
\]
where $A$ is a non-empty set and each $R_i^\mathbf{A}$ is a binary relation on $A$. We will drop the superscript on $R_i^{\mathbf{A}}$, and simply write $R_i$, when $\mathbf{A}$ is understood.

If $\mathbf{A}$ and $\mathbf{B}$ are structures in the same signature, then an \emph{embedding} from $\mathbf{A}$ to $\mathbf{B}$ is an injective function $f:A\to B$ such that
\[
    a R_i^\mathbf{A} b \iff f(a) R_i^\mathbf{B} f(b)
\]
for all $a,b\in A$ and $i\leq n$. A bijective embedding is an \emph{isomorphism}. We write $\mathbf{A}\preceq\mathbf{B}$ if there is an embedding from $\mathbf{A}$ to $\mathbf{B}$, and $\mathbf{A}\cong\mathbf{B}$ if there is an isomorphism. If $A\subseteq B$ and $R_i^\mathbf{A}=R_i^\mathbf{B}\cap A^2$ for each $i\leq n$ (i.e., the inclusion map $A\hookrightarrow B$ is an embedding), then we call $\mathbf{A}$ a \emph{substructure} of $\mathbf{B}$ and write $\mathbf{A}\subseteq\mathbf{B}$.  An isomorphism from $\mathbf{A}$ to itself is called an \emph{automorphism} and we denote by $\mathrm{Aut}(\mathbf{A})$ the group of all automorphisms of $\mathbf{A}$. We say that $\mathbf{A}$ is \emph{ultrahomogeneous} if every isomorphism between finite substructures of $\mathbf{A}$ extends to an automorphism of $\mathbf{A}$.

Given a class $\mathcal{K}$ of finite structures in a fixed signature, we consider the following three properties which may or may not hold of $\mathcal{K}$:
\begin{itemize}
    \item The \emph{hereditary property (HP)}: If $\mathbf{A}\in\mathcal{K}$ and $\mathbf{B}\preceq\mathbf{A}$, then $\mathbf{B}\in\mathcal{K}$.
    \item The \emph{joint embedding property (JEP)}: If $\mathbf{A},\mathbf{B}\in\mathcal{K}$, then there is a $\mathbf{C}\in\mathcal{K}$ such that $\mathbf{A},\mathbf{B}\preceq\mathbf{C}$.
    \item The \emph{amalgamation property (AP)}: If $\mathbf{A},\mathbf{B},\mathbf{C}\in\mathcal{K}$ and $e:A\to B$ and $f:A\to C$ are embeddings, then there is a $\mathbf{D}\in\mathcal{K}$ and embeddings $g:B\to D$ and $h:C\to D$ such that $g\circ e=h\circ f$.
\end{itemize}

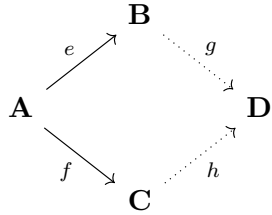
\begin{figure}[ht]
    \begin{tikzcd}
        & \mathbf{B} \arrow[dr,dotted,"g"] &\\
        \mathbf{A} \arrow[ur,"e"] \arrow[dr,swap,"f"] & & \mathbf{D} \\
        & \mathbf{C} \arrow[ur,dotted,swap,"h"] &
    \end{tikzcd}
    \caption{The amalgamation property (AP).}
    \label{fig:AP}
\end{figure}

If $\mathbf{A}$ is an infinite structure, then its \emph{age} is the class $\mathrm{Age}(\mathbf{A})$ of all finite structures in the same signature that embed into $\mathbf{A}$. It is easy to see that $\mathrm{Age}(\mathbf{A})$ always satisfies the HP and JEP. The following fundamental results are due to Fra\"iss\'e:

\begin{theorem}[Fra\"iss\'e \cite{MR0069239}, Theorem 7.1.2 in \cite{MR1221741}]\label{thm:Fraisse1}
    Let $\mathcal{K}$ be a non-empty class of finite structures in a fixed signature. If $\mathcal{K}$ satisfies the HP, JEP, and AP, then there is a unique countable ultrahomogeneous structure $\mathbf{M}$, up to isomorphism, such that $\mathrm{Age}(\mathbf{M})=\mathcal{K}$.
\end{theorem}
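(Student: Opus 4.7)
The plan is to construct $\mathbf{M}$ as the union of an increasing chain of structures from $\mathcal{K}$, built by a bookkeeping argument that ensures every finite amalgamation problem posed along the way gets solved inside $\mathbf{M}$; uniqueness will then follow from a standard back-and-forth argument in the style of Cantor's theorem mentioned in the introduction.

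First, because the signature consists of finitely many binary relation symbols, there are only countably many isomorphism types of finite structures in this signature, so $\mathcal{K}$ has at most countably many isomorphism types; pick representatives $\mathbf{K}_0, \mathbf{K}_1, \ldots$ of them. I would then recursively build a chain $\mathbf{A}_0 \subseteq \mathbf{A}_1 \subseteq \cdots$ of members of $\mathcal{K}$ designed to secure two properties: $(i)$ each $\mathbf{K}_n$ embeds into some $\mathbf{A}_m$, so that $\mathrm{Age}(\bigcup_n \mathbf{A}_n) \supseteq \mathcal{K}$, and $(ii)$ whenever $\mathbf{B}_0 \subseteq \mathbf{B}$ are members of $\mathcal{K}$ and $f : \mathbf{B}_0 \to \mathbf{A}_k$ is an embedding appearing at some finite stage, then some later $\mathbf{A}_m$ admits an embedding $\tilde{f} : \mathbf{B} \to \mathbf{A}_m$ extending $f$. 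Property $(ii)$ is the heart of the construction: at each stage one selects, via some fair enumeration, one outstanding triple $(\mathbf{B}_0, \mathbf{B}, f)$, applies AP to the diagram $\mathbf{A}_k \hookleftarrow f(\mathbf{B}_0) \cong \mathbf{B}_0 \hookrightarrow \mathbf{B}$, and takes $\mathbf{A}_{k+1}$ to be the resulting amalgam in $\mathcal{K}$. Property $(i)$ is secured by interleaving: at even stages, resolve an amalgamation task from $(ii)$; at odd stages, use JEP to jointly embed $\mathbf{A}_k$ and the next $\mathbf{K}_n$ into a member of $\mathcal{K}$, declared to be $\mathbf{A}_{k+1}$.

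Set $\mathbf{M} = \bigcup_n \mathbf{A}_n$, which is countable since each $\mathbf{A}_n$ is finite. By HP and $(i)$, $\mathrm{Age}(\mathbf{M}) = \mathcal{K}$. For ultrahomogeneity, suppose $\sigma : \mathbf{X} \to \mathbf{Y}$ is an isomorphism between finite substructures of $\mathbf{M}$. Enumerate $M = \{m_0, m_1, \ldots\}$, set $\sigma_0 = \sigma$, and at each step alternately extend the domain to include the next $m_i$ and then the range to include the next $m_i$. To extend the domain from $\mathbf{X}_k$ to $\mathbf{X}_k \cup \{m_i\}$: let $\mathbf{B}_0 = \mathbf{X}_k$, let $\mathbf{B} = \mathbf{X}_k \cup \{m_i\}$ with the induced structure from $\mathbf{M}$ (which lies in $\mathcal{K}$ by HP), and view $\sigma_k$ as an embedding of $\mathbf{B}_0$ into $\mathbf{M}$. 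Since the image of $\sigma_k$ lives inside some $\mathbf{A}_j$, property $(ii)$ produces an extension of $\sigma_k$ to an embedding of $\mathbf{B}$ into some later $\mathbf{A}_{j'} \subseteq \mathbf{M}$. The ``back'' step is symmetric. The union of the $\sigma_k$ is then the desired automorphism of $\mathbf{M}$.

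For uniqueness, let $\mathbf{M}$ and $\mathbf{M}'$ be two countable ultrahomogeneous structures with age $\mathcal{K}$. Enumerate both universes and run a back-and-forth construction: given a partial isomorphism $\tau : \mathbf{X} \to \mathbf{Y}$ between finite substructures of $\mathbf{M}$ and $\mathbf{M}'$ respectively, extend the domain to cover the next element $m$ of $\mathbf{M}$ by embedding $\mathbf{X} \cup \{m\} \in \mathcal{K}$ into $\mathbf{M}'$ (possible since $\mathrm{Age}(\mathbf{M}') = \mathcal{K}$) and composing with an automorphism of $\mathbf{M}'$ supplied by ultrahomogeneity to realign its restriction to $\mathbf{Y}$ with $\tau$; then do the symmetric back step. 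The union is an isomorphism $\mathbf{M} \cong \mathbf{M}'$.

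The main obstacle is the bookkeeping in step $(ii)$: one must enumerate all potential triples $(\mathbf{B}_0, \mathbf{B}, f)$ that could arise at any finite stage in such a way that each is handled cofinally often, while keeping the chain inside $\mathcal{K}$. Once this accounting is set up correctly, AP applied to each triple does exactly the right thing, and the remainder of the argument is a routine back-and-forth that mirrors Cantor's proof for $(\mathbb{Q},<)$ recalled in the introduction.
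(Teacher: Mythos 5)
Your proposal is correct: it is the standard construction of the Fra\"iss\'e limit as a union of a chain in $\mathcal{K}$ with the extension property secured by dovetailed applications of JEP and AP, followed by back-and-forth arguments for ultrahomogeneity and uniqueness. The paper does not prove this theorem itself --- it is quoted as a classical result of Fra\"iss\'e, citing Theorem 7.1.2 of Hodges --- and your argument is essentially the proof given in that reference, with the only routine details left implicit being the identification of $\mathbf{A}_k$ with its image inside each amalgam (so that the chain is genuinely a chain of substructures) and the fact that one-point extensions suffice for the bookkeeping.
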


We call such a $\mathcal{K}$ a \emph{Fra\"iss\'e class} and $\mathbf{M}$ its \emph{Fra\"iss\'e limit}. Conversely:

\begin{theorem}[Fra\"iss\'e \cite{MR0069239}, Theorem 7.1.7 in \cite{MR1221741}]\label{thm:Fraisse2}
    If $\mathbf{M}$ is a countable ultrahomogeneous structure, then $\mathrm{Age}(\mathbf{M})$ is a Fra\"iss\'e class.
\end{theorem}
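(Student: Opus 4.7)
The plan is to verify each of the three defining properties of a Fra\"iss\'e class---HP, JEP, and AP---for $\mathrm{Age}(\mathbf{M})$, working throughout inside concrete copies of the given finite structures sitting as substructures of $\mathbf{M}$. Since $\mathbf{M}$ is countable, $\mathrm{Age}(\mathbf{M})$ is (up to isomorphism) countable and contains only finite structures in the fixed signature, so the only content is the three closure conditions.

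The HP is immediate: if $\mathbf{A} \in \mathrm{Age}(\mathbf{M})$ and $\mathbf{B} \preceq \mathbf{A}$, then composing the embedding $\mathbf{B} \preceq \mathbf{A}$ with an embedding $\mathbf{A} \preceq \mathbf{M}$ shows $\mathbf{B} \preceq \mathbf{M}$, so $\mathbf{B} \in \mathrm{Age}(\mathbf{M})$. The JEP is only slightly more involved: given $\mathbf{A}, \mathbf{B} \in \mathrm{Age}(\mathbf{M})$, fix embeddings $i_A \colon \mathbf{A} \to \mathbf{M}$ and $i_B \colon \mathbf{B} \to \mathbf{M}$, and let $\mathbf{D}$ be the substructure of $\mathbf{M}$ with universe $i_A(A) \cup i_B(B)$. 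This is finite, hence lies in $\mathrm{Age}(\mathbf{M})$, and $i_A$, $i_B$ witness $\mathbf{A}, \mathbf{B} \preceq \mathbf{D}$.

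The AP is the only step of substance, and is the step where ultrahomogeneity is essential. Suppose we are given $\mathbf{A}, \mathbf{B}, \mathbf{C} \in \mathrm{Age}(\mathbf{M})$ together with embeddings $e \colon A \to B$ and $f \colon A \to C$. First I would pick arbitrary embeddings $i_B \colon \mathbf{B} \to \mathbf{M}$ and $i_C \colon \mathbf{C} \to \mathbf{M}$. Then $i_B \circ e$ and $i_C \circ f$ are two embeddings of $\mathbf{A}$ into $\mathbf{M}$, so their images are finite substructures of $\mathbf{M}$ and the map
\[
    \varphi \;=\; (i_C \circ f) \circ (i_B \circ e)^{-1} \colon (i_B \circ e)(A) \to (i_C \circ f)(A)
\]
is an isomorphism between these finite substructures of $\mathbf{M}$. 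Invoking ultrahomogeneity, I extend $\varphi$ to an automorphism $\sigma \in \mathrm{Aut}(\mathbf{M})$. Setting $g = \sigma \circ i_B \colon \mathbf{B} \to \mathbf{M}$ and $h = i_C \colon \mathbf{C} \to \mathbf{M}$, the equality $g \circ e = h \circ f$ holds on the nose by construction, and letting $\mathbf{D}$ be the substructure of $\mathbf{M}$ on $g(B) \cup h(C)$ produces the required amalgam in $\mathrm{Age}(\mathbf{M})$.

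The main obstacle, if there is one, is simply recognizing the right use of ultrahomogeneity: we cannot expect arbitrary embeddings of $\mathbf{B}$ and $\mathbf{C}$ into $\mathbf{M}$ to agree on the image of $\mathbf{A}$, but ultrahomogeneity is exactly what allows us to correct one of them by an automorphism of $\mathbf{M}$ so that the two copies of $\mathbf{A}$ inside $\mathbf{M}$ coincide. Beyond that observation, the argument is a diagram chase internal to $\mathbf{M}$.
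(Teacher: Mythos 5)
Your proof is correct. The paper does not actually prove this statement---it is quoted as a known result of Fra\"iss\'e, with a reference to Theorem 7.1.7 of Hodges---but your argument is precisely the standard one given there: HP and JEP follow formally from composing embeddings and taking the substructure on a union of images, and for AP you use ultrahomogeneity exactly where it is needed, namely to extend the isomorphism $(i_C \circ f) \circ (i_B \circ e)^{-1}$ between the two finite copies of $\mathbf{A}$ inside $\mathbf{M}$ to an automorphism $\sigma$, after which $g = \sigma \circ i_B$ and $h = i_C$ satisfy $g \circ e = h \circ f$ and the substructure of $\mathbf{M}$ on $g(B) \cup h(C)$ is the required amalgam. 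No gaps.
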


The standard back-and-forth argument used to prove Cantor's characterization of $(\mathbb{Q},<)$ can be used to show that $(\mathbb{Q},<)$ is ultrahomogeneous, and consequently, is the Fra\"iss\'e limit of the class $\mathcal{LO}$ of all finite linear orders. It can also be shown that the class $\mathcal{PO}$ of all finite partial orders forms a Fra\"iss\'e class, and thus has a Fra\"iss\'e limit, known as the \emph{generic partial order}; see \cite{MR2800979} for this and other examples. However, for any $n\geq 2$, the class $\mathcal{PO}_n$ of all finite $n$-dimensional partial orders is not a Fra\"iss\'e class:

\begin{proposition}\label{prop:PO_n_not_AP}
    For any $n\geq 2$, $\mathcal{PO}_n$ does not satisfy the amalgamation property.
\end{proposition}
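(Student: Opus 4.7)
The plan is to exhibit $\mathbf{A},\mathbf{B},\mathbf{C}\in\mathcal{PO}_n$ and embeddings $e\colon A\to B$, $f\colon A\to C$ such that any amalgam necessarily contains the $(n+1)$-crown $(C_{n+1},<)$ of Example~\ref{ex:crown} as a subposet. Since $\dim(C_{n+1},<)=n+1$ and dimension is monotone under passing to subposets (any realizer of a poset restricts to one on a subposet), this forces the amalgam to have dimension at least $n+1$, contradicting membership in $\mathcal{PO}_n$.

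Concretely, I take $\mathbf{A}=\{a_1,\ldots,a_{n+1}\}$ to be an antichain (dimension $2\leq n$), with $e,f$ the inclusions into
\begin{itemize}
    \item $\mathbf{B}=\mathbf{A}\cup\{b\}$, where $b<a_j$ for $j\leq n$ and $b$ is incomparable to $a_{n+1}$; and
    \item $\mathbf{C}=\mathbf{A}\cup\{b_1,\ldots,b_n\}$, where the $b_i$'s are pairwise incomparable and $b_i<a_j$ iff $i\neq j$ (for $i\leq n$, $j\leq n+1$).
\end{itemize}
The intuition is that $\mathbf{B}$ supplies the ``missing row'' $b_{n+1}$ of the crown, while $\mathbf{C}$ supplies $b_1,\ldots,b_n$; moreover, the element $a_{n+1}$ is present on both sides precisely to create the conflict that will rule out amalgamation.

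Membership $\mathbf{B}\in\mathcal{PO}_n$ is witnessed by the two-order realizer $(b<a_1<\cdots<a_{n+1})$ paired with $(a_{n+1}<b<a_n<\cdots<a_1)$. For $\mathbf{C}$, the subposet on $\{a_1,\ldots,a_n,b_1,\ldots,b_n\}$ is isomorphic to $(C_n,<)$, giving $\dim(\mathbf{C})\geq n$; for the matching upper bound $\dim(\mathbf{C})\leq n$, I would modify the standard $n$-order realizer of $(C_n,<)$ by inserting $a_{n+1}$ between $b_k$ and the tail of remaining $a$'s in each of the $n$ linear orders. Explicitly, for $k=1,\ldots,n$, one can take the linear order listing elements as $b_1,\ldots,b_{k-1},b_{k+1},\ldots,b_n,a_k,b_k,a_{n+1},a_1,\ldots,a_{k-1},a_{k+1},\ldots,a_n$. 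Verifying that the intersection of these $n$ orders agrees with $\mathbf{C}$ is the main technical step, a routine but multi-case check.

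Finally, suppose for contradiction some $\mathbf{D}\in\mathcal{PO}_n$ amalgamates $\mathbf{B}$ and $\mathbf{C}$ over $\mathbf{A}$ via embeddings $g,h$; after identifying $g(a_i)=h(a_i)$, rename $g(b)$ as $b_{n+1}$. A transitivity argument determines the relation in $\mathbf{D}$ between $b_{n+1}$ and each $b_i$ ($i\leq n$): the relation $b_i<b_{n+1}$ combined with $b_{n+1}<a_i$ (from $\mathbf{B}$, valid since $i\leq n$) would give $b_i<a_i$, contradicting incomparability in $\mathbf{C}$; dually, $b_{n+1}<b_i$ combined with $b_i<a_{n+1}$ (from $\mathbf{C}$, valid since $i\neq n+1$) would give $b_{n+1}<a_{n+1}$, contradicting incomparability in $\mathbf{B}$. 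Hence $b_{n+1}$ is incomparable to every $b_i$ in $\mathbf{D}$. Together with the pairwise incomparability of $b_1,\ldots,b_n$ from $\mathbf{C}$ and the crown relations inherited from $\mathbf{B}$ and $\mathbf{C}$, the induced subposet of $\mathbf{D}$ on $\{a_1,\ldots,a_{n+1},b_1,\ldots,b_{n+1}\}$ is exactly $(C_{n+1},<)$, giving the desired contradiction.
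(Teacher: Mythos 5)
Your proof is correct and follows essentially the same route as the paper: both obstruct amalgamation by forcing a copy of the $(n+1)$-crown, splitting it over the antichain $A=\{a_1,\ldots,a_{n+1}\}$ into a piece containing one $b$ and a piece containing the other $n$ (your $\mathbf{B}$ and $\mathbf{C}$ are the paper's $\mathbf{C}$ and $\mathbf{B}$ up to relabelling), and then using the same transitivity argument to pin down the missing incomparabilities. Two small remarks: the paper gets $\dim(\mathbf{C})\leq n$ for free from Hiraguchi's inequality (\ref{eqn:hiraguchi}) since $|C|=2n+1$, sparing your explicit realizers, and your case analysis should also rule out the identification $g(b)=h(b_i)$ in the amalgam, which follows at once because $h(b_i)<a_{n+1}$ while $g(b)$ is incomparable to $a_{n+1}$.
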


\begin{proof}
    Let $(C_{n+1},<)$ be the $(n+1)$-crown as in Example \ref{ex:crown}, where
    \[
        C_{n+1}=\{a_1,a_2,\ldots,a_{n+1}\}\cup\{b_1,b_2,\ldots,b_{n+1}\}
    \]
    and $a_i<b_j$ if and only if $i\neq j$. Let 
    \begin{align*}
        A&=\{a_1,a_2,\ldots,a_{n+1}\},\\
        B&=A\cup\{b_2,\ldots,b_{n+1}\},\\
        C&=A\cup\{b_1\},
    \end{align*}
    and consider $(A,<)$, $(B,<)$, and $(C,<)$ as suborders of $(C_{n+1},<)$. Let $e$ and $f$ be the natural embeddings of $(A,<)$ into $(B,<)$ and $(C,<)$ respectively.

    \begin{figure}[ht]
    \scalebox{.50}{
    \begin{tikzpicture}
        \tikzstyle{node_style_a}=[shape=circle,very thin, draw=black]
        \tikzstyle{node_style_b}=[shape=circle,very thin, draw=black, fill=gray]

        \node[node_style_a] (a1) at (1,1) {$a_1$};
        \node[node_style_a,right=of a1] (a2) {$a_2$};
        \node[node_style_a,right=of a2] (a3) {$a_3$};
        \node[below=0.25cm of a2] {\huge{$A$}};

        \node[node_style_a, above right=4cm and 4cm of a3] (aa1) {$a_1$};
        \node[node_style_a,right=of aa1] (aa2) {$a_2$};
        \node[node_style_a,right=of aa2] (aa3) {$a_3$};
        \node[node_style_b,above=of aa2] (bb2) {$b_2$};
        \node[node_style_b,above=of aa3] (bb3) {$b_3$};
        \draw (aa1) -- (bb2) (aa1) -- (bb3);
        \draw (aa2) -- (bb3);
        \draw (aa3) -- (bb2);
        \node[below=0.25cm of aa2] {\huge{$B$}};

        \node[node_style_a, below right=4cm and 4cm of a3] (aaa1) {$a_1$};
        \node[node_style_a,right=of aaa1] (aaa2) {$a_2$};
        \node[node_style_a,right=of aaa2] (aaa3) {$a_3$};
        \node[node_style_b, above=of aaa1] (bbb1) {$b_1$};
        \draw (aaa2) -- (bbb1) (aaa3) -- (bbb1);
        \node[below=0.25cm of aaa2] {\huge{$C$}};

        \node[node_style_a,right=12cm of a3] (aaaa1) {$h(a_1)$};
        \node[node_style_a,right=of aaaa1] (aaaa2) {$h(a_2)$};
        \node[node_style_a,right=of aaaa2] (aaaa3) {$h(a_3)$};
        \node[node_style_b,above=of aaaa1] (bbbb1) {$h(b_1)$};
        \node[node_style_b,above=of aaaa2] (bbbb2) {$g(b_2)$};
        \node[node_style_b,above=of aaaa3] (bbbb3) {$g(b_3)$};
        \draw (aaaa1) -- (bbbb2) (aaaa1) -- (bbbb3);
        \draw (aaaa2) -- (bbbb1) (aaaa2) -- (bbbb3);
        \draw (aaaa3) -- (bbbb1) (aaaa3) -- (bbbb2);
        \node[below=0.25cm of aaaa2] {\huge{$D$}};

        \draw[->] (a1) -- (aa1) node[midway, above left=0.25cm and 0.25cm] {\huge{$e$}};
        \draw[->] (a2) -- (aa2);
        \draw[->] (a3) -- (aa3);
        \draw[->] (a1) -- (aaa1) node[midway, below left=0.25cm and 0.25cm] {\huge{$f$}};
        \draw[->] (a2) -- (aaa2);
        \draw[->] (a3) -- (aaa3);
        \draw[dotted, ->] (aa1) -- (aaaa1);
        \draw[dotted, ->] (aa2) -- (aaaa2);
        \draw[dotted, ->] (aa3) -- (aaaa3);
        \draw[dotted, ->] (bb2) -- (bbbb2);
        \draw[dotted, ->] (bb3) -- (bbbb3) node[midway, above right=0.25cm and 0.25cm] {\huge{$g$}};
        \draw[dotted, ->] (aaa1) -- (aaaa1);
        \draw[dotted, ->] (aaa2) -- (aaaa2);
        \draw[dotted, ->] (aaa3) -- (aaaa3) node[midway, below right=0.25cm and 0.25cm] {\huge{$h$}};
        \draw[dotted, ->] (bbb1) -- (bbbb1);
    \end{tikzpicture}}
    \caption{The failure of the AP in $\mathcal{PO}_2$.}
    \end{figure}
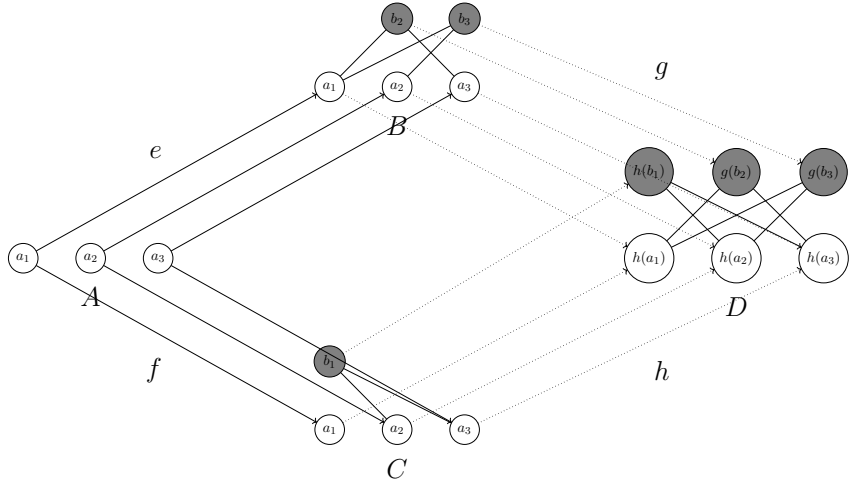

    The partial orders $(A,<)$ and $(C,<)$ are clearly $2$-dimensional, while $(B,<)$ is $n$-dimensional by Hiraguchi's inequality (\ref{eqn:hiraguchi}). Suppose that a partial order $(D,<)$ amalgamates $(B,<)$ and $(C,<)$ over $(A,<)$ via $g$ and $h$, as in Figure \ref{fig:AP}. We claim that the union of the images of $g$ and $h$ forms an embedded copy $(C_{n+1},<)$ in $(D,<)$. 
    
    Since $a_i<b_1$ if and only if $i\neq 1$ in $(C,<)$, $g(a_i)=h(a_i)<h(b_1)$ if and only if $i\neq 1$ in $(D,<)$. Then, since $g(a_1)=h(a_1)\not< h(b_1)$ and $g(a_1)<g(b_j)$ for any $j\neq 1$, it cannot be that $g(b_j)\leq h(b_1)$. Finally, if $h(b_1)<g(b_j)$ for some $j\neq 1$, then since $g(a_j)=h(a_j)<h(b_1)$, we have $g(a_j)<g(b_j)$, contrary to the fact that $a_j\not<b_j$ in $(B,<)$. Thus, the images of $g$ and $h$ form an embedded copy of $(C_{n+1},<)$. But then, $(D,<)$ has dimension $\geq n+1$, showing that the amalgamation property fails.
\end{proof}

We also note that neither $\mathbf{D}_n=(D_n,<)$ nor $(\mathbb{Q}^n,<)$ are ultrahomogeneous.

\begin{proposition}\label{prop:D_n_not_hom}
    For any $n\geq 2$, $\mathbf{D}_n$ and $(\mathbb{Q}^n,<)$ are not ultrahomogeneous.
\end{proposition}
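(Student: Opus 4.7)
The plan is to prove this by contradiction, leveraging the already-established failure of amalgamation for $\mathcal{PO}_n$. Specifically, the second half of Fra\"{i}ss\'{e}'s theorem (Theorem \ref{thm:Fraisse2}) says that if $\mathbf{D}_n$ were ultrahomogeneous, then $\mathrm{Age}(\mathbf{D}_n)$ would automatically be a Fra\"{i}ss\'{e} class, in particular satisfying the amalgamation property.

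The first step is to identify $\mathrm{Age}(\mathbf{D}_n) = \mathcal{PO}_n$. The inclusion $\mathrm{Age}(\mathbf{D}_n)\subseteq\mathcal{PO}_n$ is immediate from Theorem \ref{thm:Ore}, since $\mathbf{D}_n$ is a suborder of the $n$-dimensional partial order $(\mathbb{Q}^n,<)$, so all its finite substructures are $n$-dimensional. The reverse inclusion is the Manaster--Remmel universality of $\mathbf{D}_n$ for countable $n$-dimensional partial orders, cited in the excerpt (and which applies \emph{a fortiori} to finite members of $\mathcal{PO}_n$). With this identification in hand, the contradiction is immediate: Theorem \ref{thm:Fraisse2} would force $\mathcal{PO}_n$ to satisfy the amalgamation property, directly contradicting Proposition \ref{prop:PO_n_not_AP}.

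The parenthetical remark about $(\mathbb{Q}^n,<)$ is then automatic: by Theorem \ref{thm:Ore}, $(\mathbb{Q}^n,<)$ is itself $n$-dimensional and universal for countable $n$-dimensional partial orders, so $\mathrm{Age}(\mathbb{Q}^n,<)=\mathcal{PO}_n$ as well, and the same reduction to Proposition \ref{prop:PO_n_not_AP} applies. There is no serious obstacle here, as all the pieces are already assembled in the paper; if a more concrete argument were preferred, one could instead unwind Proposition \ref{prop:PO_n_not_AP} directly by embedding the sets $B$ and $C$ from its proof into $\mathbf{D}_n$ so that their copies of $A$ agree, and then use a hypothetical extension of the identity isomorphism on $A$ to an automorphism of $\mathbf{D}_n$ to produce an embedded $(n+1)$-crown, contradicting the $n$-dimensionality of $\mathbf{D}_n$.
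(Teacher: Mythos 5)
Your main argument is correct, but it takes a genuinely different route from the paper. The paper proves Proposition \ref{prop:D_n_not_hom} by a direct construction: it exhibits a $3$-point antichain $\{\mathbf{a},\mathbf{b},\mathbf{c}\}$ in $D_n$, takes the isomorphism swapping $\mathbf{a}$ and $\mathbf{b}$, and shows it cannot be extended to a fourth point $\mathbf{x}$ with $\mathbf{b},\mathbf{c}<\mathbf{x}$ but $\mathbf{x}$ incomparable to $\mathbf{a}$ (any candidate image $\mathbf{y}$ of $\mathbf{x}$ would be forced above all coordinates of $\mathbf{b}$, hence above $f(\mathbf{a})$). You instead deduce the result abstractly: $\mathbf{D}_n$ is countable, $\mathrm{Age}(\mathbf{D}_n)=\mathcal{PO}_n$ (one inclusion via Theorem \ref{thm:Ore}, the other via the Manaster--Remmel universality cited in Section \ref{sec:posets}), so ultrahomogeneity would force $\mathcal{PO}_n$ to have the AP by Theorem \ref{thm:Fraisse2}, contradicting Proposition \ref{prop:PO_n_not_AP}. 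All the ingredients you invoke do precede this proposition in the paper, so there is no circularity, and the same reasoning handles $(\mathbb{Q}^n,<)$. What your approach buys is brevity and a clean conceptual explanation of \emph{why} homogeneity must fail; what the paper's approach buys is quantitative information --- its four-point configuration shows $\mathbf{D}_n$ is not even $3$-homogeneous, a fact the authors explicitly record and contrast with Corollary \ref{cor:D_n_2-hom}, whereas unwinding your argument only witnesses a failure at substructures of size $n+1$.

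One small caution about your optional concrete aside: you cannot ``embed $B$ and $C$ into $\mathbf{D}_n$ so that their copies of $A$ agree,'' since by the argument of Proposition \ref{prop:PO_n_not_AP} such a configuration would already be an embedded $(n+1)$-crown inside the $n$-dimensional order $\mathbf{D}_n$, which is impossible. The correct unwinding embeds $B$ and $C$ separately (so their copies of $A$ need not agree) and then uses a hypothetical automorphism extending the isomorphism between the two copies of $A$ to align them, producing the forbidden crown. This does not affect your main proof, which stands on its own.
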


\begin{proof}
    We will describe the proof for $\mathbf{D}_n$, the same argument works for $(\mathbb{Q}^n,<)$. Consider points $\mathbf{a}$, $\mathbf{b}$, and $\mathbf{c}$ in $D_n$ such that 
    \[
        a_1<b_1<c_1 \quad\text{and}\quad c_i<b_i<a_i
    \]
    for $2\leq i\leq n$. Then, $\{\mathbf{a},\mathbf{b},\mathbf{c}\}$ forms an antichain in $\mathbf{D}_n$, and so any permutation of $\{\mathbf{a},\mathbf{b},\mathbf{c}\}$ is an automorphism of $(\{\mathbf{a},\mathbf{b},\mathbf{c}\},<)$. Let $f$ be the permutation which exchanges $\mathbf{a}$ and $\mathbf{b}$, i.e., $f(\mathbf{a})=\mathbf{b}$, $f(\mathbf{b})=\mathbf{a}$, and $f(\mathbf{c})=\mathbf{c}$.
    
    \begin{figure}[ht]
    \scalebox{.67}{
    \begin{tikzpicture}
        \tikzstyle{node_style}=[shape=circle,minimum size=0.01cm,very thin, draw=black, fill=black]
        \tikzstyle{red}=[shape=circle,minimum size=0.01cm,very thin, draw=gray, fill=gray]
        \draw[->] (0,0) -- (0,5);
        \draw[->] (0,0) -- (5,0);
        \draw[->] (8,0) -- (13,0);
        \draw[->] (8,0) -- (8,5);
        \node[red] (x) at (4,3) {};
        \node[left=3 pt of {(4,3)}, outer sep=3pt, fill=white] {$\mathbf{x}$};
        \node[red] (y) at (12,5) {};
        \node[left=3 pt of {(12,5)}, outer sep=3pt, fill=white] {$\mathbf{y}$};
        \node[node_style] (a) at (1,4) {};
        \node[left=3 pt of {(1,4)}, outer sep=3pt,fill=white] {$\mathbf{a}$};
        \node[node_style] (b) at (2,2) {};
        \node[left=3 pt of {(2,2)}, outer sep=3pt,fill=white] {$\mathbf{b}$};
        \node[node_style] (c) at (3,1) {};
        \node[left=3 pt of {(3,1)}, outer sep=3pt,fill=white,] {$\mathbf{c}$};
        \node[node_style] (d) at (9,4) {};
        \node[right=3 pt of {(9,4)}, outer sep=3pt,fill=white] {$f(\mathbf{b})$};
        \node[node_style] (e) at (10,2) {};
        \node[right=3 pt of {(10,2)}, outer sep=3pt,fill=white] {$f(\mathbf{a})$};
        \node[node_style] (f) at (11,1) {};
        \node[right=3 pt of {(11,1)}, outer sep=3pt,fill=white] {$f(\mathbf{c})$};

        \draw[thin,->] (a) -- (e);
        \draw[thin,->] (b) -- (d);
        \draw[thin,->] (c) -- (f);
        \draw[thin,dotted,->] (x) -- (y);
    \end{tikzpicture}}
    \caption{$\mathbf{D}_2$ is not ultrahomogeneous.}
    \end{figure}
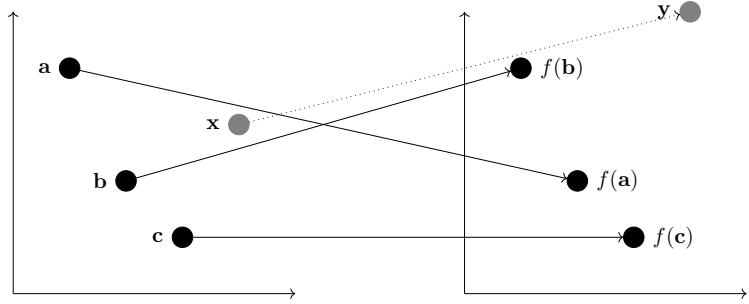

    Consider a new point $\mathbf{x}$ in $D_n$ such that $b_i<x_i$ and $c_i<x_i$ for all $i\leq n$, but $x_2<a_2$. In particular, $\mathbf{b}<\mathbf{x}$ and $\mathbf{c}<\mathbf{x}$, but $\mathbf{a}$ and $\mathbf{x}$ are incomparable. In order to extend $f$ to an isomorphism $g$ on $\{\mathbf{a},\mathbf{b},\mathbf{c},\mathbf{x}\}$, we must assign $g(\mathbf{x})$ to a point $\mathbf{y}$ in $D_n$ where $\mathbf{a}=f(\mathbf{b})<\mathbf{y}$ and $\mathbf{c}=f(\mathbf{c})<\mathbf{y}$. Then, $a_i<y_i$ and $c_i<y_i$ for all $i\leq n$, which implies, by our choice of $\mathbf{a}$, $\mathbf{b}$, and $\mathbf{c}$, that $b_i<y_i$ for all $i\leq n$, i.e., $\mathbf{b}=f(\mathbf{a})<\mathbf{y}$. But $\mathbf{a}$ and $\mathbf{x}$ are incomparable, so we cannot have $g(\mathbf{a})=f(\mathbf{a})<\mathbf{y}=g(\mathbf{x})$. Thus, there is no such $g$.
\end{proof}

A structure is called \emph{$k$-homogeneous} if every isomorphism between substructures of size $\leq\!k$ extends to an automorphism, see \cite{MR1261211}. The preceding proof shows that $\mathbf{D}_n$ is not $3$-homogeneous, but we will see in Corollary \ref{cor:D_n_2-hom} that it is $2$-homogeneous.

\section{\texorpdfstring{$\mathbf{D}_{n,<_1,\ldots,<_n}$}{Dn<1,...,<n} as a Fra\"iss\'e limit}\label{sec:limit}

Throughout this section and the remainder of the article, we fix an $n\geq 2$. We will show that the class $\mathcal{PO}_{n,<_1,\ldots,<_n}$ of all finite $n$-dimensional partial orders with realizers is a Fra\"iss\'e class and that $\mathbf{D}_{n,<_1,\ldots,<_n}=(D_n,<,<_1,\ldots,<_n)$ is its Fra\"iss\'e limit. We start with the following lemma which allows us to carry out back-and-forth arguments involving $\mathbf{D}_{n,<_1,\ldots,<_n}$.

\begin{lemma}\label{lem:forth}
    Let $\mathbf{P}\subseteq\mathbf{Q}$ be structures in $\mathcal{PO}_{n,<_1,\ldots,<_n}$. If $f$ is an embedding of $\mathbf{P}$ into $\mathbf{D}_{n,<_1,\ldots,<_n}$, then there is an embedding $g$ of $\mathbf{Q}$ into $\mathbf{D}_{n,<_1,\ldots,<_n}$ which extends $f$.
    \[
        \begin{tikzcd}
            \mathbf{P} \arrow[r,"f"] \arrow[d,swap,"\subseteq"] & \mathbf{D}_{n,<_1,\ldots,<_n}\\
            \mathbf{Q} \arrow[ur,dotted,swap,"g"] &
        \end{tikzcd}
    \]
\end{lemma}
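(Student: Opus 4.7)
The plan is to reduce to the single-point extension case and then use the topological density of $D_n$ in $\mathbb{Q}^n$ to locate the image of the new point. By induction on $|Q\setminus P|$ (composition of one-point extensions), it suffices to handle $Q = P\cup\{q\}$ for a single $q\notin P$.

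For each $i\leq n$, let $p_i^-$ be the immediate $<_i$-predecessor of $q$ in $\mathbf{Q}$ (if one exists), and $p_i^+$ the immediate $<_i$-successor (if one exists). I would look for $\mathbf{y}\in D_n$ satisfying $(f(p_i^-))_i < y_i < (f(p_i^+))_i$ for each $i\leq n$, with the convention that a missing endpoint imposes no constraint on that side. Since $<_i$ on $\mathbf{D}_{n,<_1,\ldots,<_n}$ is exactly the $i$-th coordinate order, such a $\mathbf{y}$ ensures that $g:=f\cup\{(q,\mathbf{y})\}$ preserves each $<_i$; preservation of $<$ is then automatic, as $<$ is the intersection of $<_1,\ldots,<_n$ in both $\mathbf{Q}$ and $\mathbf{D}_{n,<_1,\ldots,<_n}$. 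Injectivity comes for free: for every $p\in P$, either $p<_i q$ or $q<_i p$ for some $i$, which forces $y_i \neq (f(p))_i$, so $\mathbf{y}\neq f(p)$.

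The admissible set for $\mathbf{y}$ is a product of (possibly unbounded) open rational intervals, hence a non-empty open subset of $\mathbb{Q}^n$: non-emptiness of each interval follows, when both endpoints exist, from $p_i^- <_i p_i^+$ in $\mathbf{P}$ and the fact that $f$ preserves $<_i$. The topological density of $D_n$ in $\mathbb{Q}^n$, built into its construction via Proposition \ref{prop:D_n_exists}, then yields a suitable $\mathbf{y}$ in this open set. I do not foresee a serious obstacle; the critical structural feature is that the $n$ constraints on $\mathbf{y}$ act on distinct coordinates and so decouple, allowing them to be satisfied simultaneously by density of $D_n$.
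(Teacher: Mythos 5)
Your proposal is correct and follows essentially the same route as the paper's proof: reduce to a one-point extension, use the position of $q$ in each realizer $<_i$ to carve out an open rational interval in the $i$-th coordinate, and pick the image of $q$ in the resulting non-empty open box via topological density of $D_n$. The only cosmetic difference is that you phrase the interval endpoints via immediate $<_i$-predecessors and successors rather than the paper's three-case analysis, and you make injectivity explicit where the paper leaves it implicit.
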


\begin{proof}
    Suppose $\mathbf{P}=(P,<,<_1,\ldots,<_n)\subseteq\mathbf{Q}=(Q,<,<_1,\ldots,<_n)$ and $f$ is as described. It suffices to consider the case when $|Q|=|P|+1$, as the general situation may be handled by repeated application of this ``one-step'' case. So, we will assume that $P=\{p_1,\ldots,p_m\}$ and $Q=P\cup\{q\}$, and write $\mathbf{a}^j=(a^j_1,\ldots,a^j_n)=f(p_j)\in D_n$ for each $j\leq m$.

    For each $i\leq n$, $q$ lies in a unique interval with respect to the ordering $<_i$ on the elements of $P$: either (i) $q<_i p_j$ for all $j\leq m$, (ii) $p_j<_i q$ for all $j\leq m$, or (iii) there are $j,k\leq m$ such that $p_j<_iq<_ip_k$ with no other elements $<_i$-between $p_j$ and $p_k$. We define a non-empty open interval $I_i$ in $\mathbb{Q}$ according to which of these three cases holds:
    \[
        I_i = \begin{cases} (-\infty,\min\{a^j_i:j\leq m\}) &\text{if (i) holds},\\ (\max\{a^j_i:j\leq m\},\infty) &\text{if (ii) holds},\\ (a^j_i,a^k_i)&\text{if (iii) holds}.\end{cases}
    \]

    Put $U=I_1\times\cdots\times I_n$, a non-empty open hypercube in $\mathbb{Q}^n$. By density, we can choose an element $\mathbf{y}\in U\cap D_n$ and then extend $f$ to $g$ on $Q$ by setting $g(q)=\mathbf{y}$. It remains to verify that $g$ is an embedding. Since $<_1,\ldots,<_n$ realize $<$ on $Q$, it suffices to show that for each $i\leq n$, $g$ preserves $<_i$.

    Fix $i\leq n$ and consider each of the cases (i), (ii), and (iii) above. If (i) holds, $q<_i p_j$ for all $j\leq m$, then 
    \[
        y_i\in I_i=(-\infty,\min\{a^j_i:j\leq m\}),
    \]
    so $y_i<a^j_i$, and thus $\mathbf{y}<_i\mathbf{a}^j=f(p_j)$, for all $j\leq m$. If (ii) holds, then similarly we have that $f(p_j)=\mathbf{a}^j<_i\mathbf{y}$ for all $j\leq m$. If (iii) holds, $p_j<_iq<_i p_k$ for $j,k\leq m$ with no other elements $<_i$-between them, then 
    \[
        y_i\in I_i=(a^j_i,a^j_k),
    \]
    so $a^j_i<y_i<a^k_i$, and thus $f(p_j)=\mathbf{a}^j<_i\mathbf{y}<_i\mathbf{a}^k=f(p_k)$. Hence, $\mathbf{y}$ has exactly the same relationship to $\mathbf{a}^1,\ldots,\mathbf{a}^m$ with respect to $<_i$ as $q$ does to $p_1,\ldots,p_m$, proving that $g$ preserves $<_i$. Thus, $g$ is an embedding.    
\end{proof}

\begin{theorem}\label{thm:D_n_Fraisse}
   $\mathbf{D}_{n,<_1,\ldots,<_n}$ is ultrahomogeneous and its age is $\mathcal{PO}_{n,<_1,\ldots,<_n}$. Consequently, $\mathcal{PO}_{n,<_1,\ldots,<_n}$ is a Fra\"iss\'e class with limit $\mathbf{D}_{n,<_1,\ldots,<_n}$. 
\end{theorem}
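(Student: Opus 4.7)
The plan is to prove the two assertions in the order stated, deduce the Fra\"{i}ss\'{e}-class conclusion from Theorem \ref{thm:Fraisse2}, and appeal to Theorem \ref{thm:Fraisse1} for the identification of the limit. Lemma \ref{lem:forth} will do almost all of the combinatorial work.

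\textbf{Step 1: Identifying the age.} The inclusion $\mathrm{Age}(\mathbf{D}_{n,<_1,\ldots,<_n}) \subseteq \mathcal{PO}_{n,<_1,\ldots,<_n}$ is immediate: for any finite $S\subseteq D_n$, the restriction of each $<_i$ to $S$ is a linear order, and the restriction of $<$ is their intersection, by the general observation at the end of Section~\ref{sec:posets} about how $<$, $<_1,\ldots,<_n$ are related on $D_n$. For the reverse inclusion, let $\mathbf{Q}\in \mathcal{PO}_{n,<_1,\ldots,<_n}$ be arbitrary. Take $\mathbf{P}$ to be the empty substructure and the empty map as a vacuous embedding of $\mathbf{P}$ into $\mathbf{D}_{n,<_1,\ldots,<_n}$; a single application of Lemma \ref{lem:forth} (iterated $|Q|$ times, one element at a time) produces an embedding of $\mathbf{Q}$ into $\mathbf{D}_{n,<_1,\ldots,<_n}$. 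Hence $\mathrm{Age}(\mathbf{D}_{n,<_1,\ldots,<_n}) = \mathcal{PO}_{n,<_1,\ldots,<_n}$.

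\textbf{Step 2: Ultrahomogeneity via back-and-forth.} Let $f_0 \colon \mathbf{A} \to \mathbf{B}$ be an isomorphism between two finite substructures of $\mathbf{D}_{n,<_1,\ldots,<_n}$. Since $D_n$ is countable, fix an enumeration $D_n = \{\mathbf{d}_m : m\in\mathbb{N}\}$. I will construct a chain of partial isomorphisms $f_0 \subseteq f_1 \subseteq f_2 \subseteq \cdots$ between finite substructures of $\mathbf{D}_{n,<_1,\ldots,<_n}$ such that $\mathbf{d}_m \in \mathrm{dom}(f_{2m+1})$ and $\mathbf{d}_m \in \mathrm{range}(f_{2m+2})$; the union $\bigcup_k f_k$ will then be an automorphism of $\mathbf{D}_{n,<_1,\ldots,<_n}$ extending $f_0$. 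On an odd step, having $f_{2m}$ with domain $A'$, view $f_{2m}$ as an embedding of $(A',<,<_1,\ldots,<_n)$ into $\mathbf{D}_{n,<_1,\ldots,<_n}$, set $\mathbf{Q}$ to be the substructure on $A'\cup\{\mathbf{d}_m\}$, and apply Lemma \ref{lem:forth} to extend to the domain. On an even step, apply the same lemma to $f_{2m+1}^{-1}$ to absorb $\mathbf{d}_m$ into the range, and invert. Step 1 guarantees that the substructures appearing here lie in $\mathcal{PO}_{n,<_1,\ldots,<_n}$, so Lemma \ref{lem:forth} applies.

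\textbf{Step 3: Conclusion.} From Steps 1 and 2, $\mathbf{D}_{n,<_1,\ldots,<_n}$ is a countable ultrahomogeneous structure whose age is $\mathcal{PO}_{n,<_1,\ldots,<_n}$. Theorem \ref{thm:Fraisse2} then states that $\mathcal{PO}_{n,<_1,\ldots,<_n}$ is a Fra\"iss\'e class, and Theorem \ref{thm:Fraisse1} gives uniqueness of the ultrahomogeneous structure with that age up to isomorphism, so $\mathbf{D}_{n,<_1,\ldots,<_n}$ is its Fra\"iss\'e limit.

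I do not anticipate any serious obstacle here: once Lemma \ref{lem:forth} is in hand, the argument is essentially Cantor's back-and-forth adapted to the expanded signature, with the ``free'' choice of the intermediate point $\mathbf{y}\in U\cap D_n$ (provided by the topological density of $D_n$) playing the role of Cantor's density of the rationals in each coordinate interval $I_i$. The only minor point to watch is that in the back step one must verify that $f_{2m+1}^{-1}$ is indeed an embedding between elements of $\mathcal{PO}_{n,<_1,\ldots,<_n}$, which is automatic from Step~1 applied to the range substructure.
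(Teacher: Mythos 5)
Your proposal is correct and follows essentially the same route as the paper: Lemma \ref{lem:forth} yields the age identification and drives the back-and-forth argument for ultrahomogeneity, with the Fra\"iss\'e-class conclusion then coming from Theorems \ref{thm:Fraisse1} and \ref{thm:Fraisse2}. The only cosmetic difference is that you start the embedding of an arbitrary $\mathbf{Q}$ from the empty substructure, whereas the paper (whose structures have non-empty underlying sets) starts from a one-point substructure; this is immaterial since any singleton trivially embeds into $\mathbf{D}_{n,<_1,\ldots,<_n}$.
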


\begin{proof}
    To show that an arbitrary structure in $\mathcal{PO}_{n,<_1,\ldots,<_n}$ embeds into $\mathbf{D}_{n,<_1,\ldots,<_n}$, let $\mathbf{Q}$ be such a structure, $\mathbf{P}$ a one-point substructure of $\mathbf{Q}$, and apply Lemma \ref{lem:forth}. Thus, $\mathrm{Age}(\mathbf{D}_{n,<_1,\ldots,<_n})=\mathcal{PO}_{n,<_1,\ldots,<_n}$.

    That $\mathbf{D}_{n,<_1,\ldots,<_n}$ is ultrahomogeneous now follows from Lemma \ref{lem:forth} and Lemma 7.1.4(b) in \cite{MR1221741} via a back-and-forth argument, which we briefly describe here: Suppose that $f_0$ is an isomorphism from finite substructures $\mathbf{P_0}=(P,<,<_1,\ldots,<_n)$ and $\mathbf{Q}_0=(Q,<,<_1,\ldots,<_n)$ of $\mathbf{D}_{n,<_1,\ldots,<_n}$. Enumerate $D_n\setminus P$ as $\{\mathbf{a}^i:i\in\mathbb{N}\}$ and $D_n\setminus Q$ as $\{\mathbf{b}^i:i\in\mathbb{N}\}$. We recursively define an increasing chain of functions extending $f_0$. By Lemma \ref{lem:forth}, going ``forth'', there is an extension $f_1$ of $f_0$ to $\mathbf{P}_1=(P\cup\{\mathbf{a}^0\},<,<_1,\ldots,<_n)$ whose range $\mathbf{Q}_1$ is a substructure of $\mathbf{D}_{n,<_1,\ldots,<_n}$ which contains $\mathbf{Q}$. Next, going ``back'', there is an extension $f_2^{-1}$ of $f_1^{-1}$ to $\mathbf{Q}_2=(Q\cup\{f_1(\mathbf{a}^0),\mathbf{b}^0\},<,<_1,\ldots,<_n)$ whose range $\mathbf{P}_2$ contains $\mathbf{P}_1$. We continue in this fashion. The resulting $f=\bigcup_{m\in\mathbb{N}}f_m$ is then an automorphism of $\mathbf{D}_{n,<_1,\ldots,<_n}$ which extends $f_0$. It now follows from Theorem \ref{thm:Fraisse2} that $\mathcal{PO}_{n,<_1,\ldots,<_n}$ is a Fra\"iss\'e class and $\mathbf{D}_{n,<_1,\ldots,<_n}$ its limit.
\end{proof}

Lemma \ref{lem:forth} also implies the following (see Lemma 7.1.3 in \cite{MR1221741}):

\begin{corollary}
    Every countable $n$-dimensional partial order with realizers embeds into $\mathbf{D}_{n,<_1,\ldots,<_n}$.\qed
\end{corollary}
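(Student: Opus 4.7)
The plan is to prove this as a direct ``forth''-only application of Lemma \ref{lem:forth}, exactly as suggested by the reference to Lemma 7.1.3 in \cite{MR1221741}. Let $\mathbf{Q}=(Q,<,<_1,\ldots,<_n)$ be a countable $n$-dimensional partial order with realizers, and enumerate $Q=\{q_0,q_1,q_2,\ldots\}$ (the finite case is handled identically, just truncating the enumeration). For each $m\in\mathbb{N}$, let $\mathbf{Q}_m$ denote the substructure of $\mathbf{Q}$ induced on $\{q_0,\ldots,q_m\}$.

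First I would observe that each $\mathbf{Q}_m$ lies in $\mathcal{PO}_{n,<_1,\ldots,<_n}$: the restriction of $<$ to any subset is still a partial order, and the restrictions of $<_1,\ldots,<_n$ are still linear orders whose intersection is the restricted partial order, so realizers restrict to realizers. (This is just the hereditary property for $\mathcal{PO}_{n,<_1,\ldots,<_n}$.) Likewise, $\mathbf{Q}_m\subseteq\mathbf{Q}_{m+1}$ as structures in this signature.

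Next, I would recursively build a chain of embeddings $f_m:\mathbf{Q}_m\to\mathbf{D}_{n,<_1,\ldots,<_n}$. For the base case, $\mathbf{Q}_0$ is a one-point structure with trivial relations, so any choice of a point in $D_n$ gives an embedding $f_0$. For the inductive step, assume $f_m$ has been defined. Since $\mathbf{Q}_m\subseteq\mathbf{Q}_{m+1}$ are both in $\mathcal{PO}_{n,<_1,\ldots,<_n}$, Lemma \ref{lem:forth} produces an embedding $f_{m+1}:\mathbf{Q}_{m+1}\to\mathbf{D}_{n,<_1,\ldots,<_n}$ extending $f_m$.

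Finally, setting $f=\bigcup_{m\in\mathbb{N}}f_m$, one checks that $f$ is a well-defined function from $Q$ to $D_n$ (the $f_m$ agree on overlaps by construction), is injective (any two points $q_j,q_k$ lie in some common $\mathbf{Q}_m$, where $f_m$ is injective), and preserves and reflects each of $<,<_1,\ldots,<_n$ (again by considering the two points in a common $\mathbf{Q}_m$). There is no real obstacle here; the entire content of the corollary is in Lemma \ref{lem:forth}, and the argument is a routine ``forth''-only induction reminiscent of the embedding half of Cantor's theorem for $(\mathbb{Q},<)$.
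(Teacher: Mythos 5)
Your proposal is correct and is exactly the argument the paper intends: the corollary is stated with only a pointer to Lemma \ref{lem:forth} and Lemma 7.1.3 of \cite{MR1221741}, which is precisely the ``forth''-only chain construction you carry out (enumerate, extend one point at a time via Lemma \ref{lem:forth}, take the union). No gaps; nothing further is needed.
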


We note here that $(\mathbb{Q}^n,<,<_{\mathrm{lex}_1},\ldots,<_{\mathrm{lex}_n})$ is not ultrahomogeneous:

\begin{proposition}\label{prop:Qn_not_hom}
    $(\mathbb{Q}^n,<,<_{\mathrm{lex}_1},\ldots,<_{\mathrm{lex}_n})$ is not ultrahomogeneous.
\end{proposition}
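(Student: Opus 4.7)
The plan is to pit a pair of distinct \emph{colinear} points (points sharing some coordinates) against a pair of points sharing no coordinates, and show that the obvious isomorphism between the corresponding two-element substructures cannot extend to an automorphism of $(\mathbb{Q}^n,<,<_{\mathrm{lex}_1},\ldots,<_{\mathrm{lex}_n})$. This is in contrast to $\mathbf{D}_{n,<_1,\ldots,<_n}$, where colinearity is ruled out by construction.

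Concretely, I would take $\mathbf{a}=(0,\ldots,0)$ and $\mathbf{b}=(0,\ldots,0,1)$, which agree on all coordinates except the last, together with $\mathbf{a}'=(2,\ldots,2)$ and $\mathbf{b}'=(3,\ldots,3)$, which disagree on every coordinate. The first step is a routine verification that in each of $<$, $<_{\mathrm{lex}_1},\ldots,<_{\mathrm{lex}_n}$ the smaller element lies below the larger, so both pairs induce identical two-element substructures and $\mathbf{a}\mapsto\mathbf{a}'$, $\mathbf{b}\mapsto\mathbf{b}'$ is an isomorphism.

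The key step is to produce a structural property that distinguishes the two pairs. For the colinear pair, any $\mathbf{d}$ with $\mathbf{a}<_{\mathrm{lex}_1}\mathbf{d}<_{\mathrm{lex}_1}\mathbf{b}$ must satisfy $d_i=0$ for $i<n$ and $0<d_n<1$: the lexicographic walk starting at coordinate $1$ first traverses $n-1$ coordinates on which $\mathbf{a}$ and $\mathbf{b}$ agree, and if $d_i\neq 0$ at any such coordinate, then exactly one of $\mathbf{a}<_{\mathrm{lex}_1}\mathbf{d}$ or $\mathbf{d}<_{\mathrm{lex}_1}\mathbf{b}$ fails. It follows that every such $\mathbf{d}$ satisfies $\mathbf{a}<\mathbf{d}$ in the product order. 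For the non-colinear pair, however, $\mathbf{c}=(2.5,-1,2,\ldots,2)$ satisfies $\mathbf{a}'<_{\mathrm{lex}_1}\mathbf{c}<_{\mathrm{lex}_1}\mathbf{b}'$ (decided at the first coordinate alone) while being incomparable to $\mathbf{a}'$ in the product order. If some automorphism $F$ extended the isomorphism above, then $F^{-1}(\mathbf{c})$ would be a $<_{\mathrm{lex}_1}$-intermediate element between $\mathbf{a}$ and $\mathbf{b}$ that is not above $\mathbf{a}$ in the product order, contradicting the forcing of the colinear case.

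The main subtlety is choosing $\mathbf{a}$ and $\mathbf{b}$ correctly. A pair sharing only a single coordinate would not pin down all the intermediate coordinates of $\mathbf{d}$ when $n\geq 3$; one could place negative entries elsewhere in $\mathbf{d}$ while still satisfying the lex constraint. Arranging for $\mathbf{a}$ and $\mathbf{b}$ to differ in exactly one coordinate---specifically the last one reached by $<_{\mathrm{lex}_1}$---is what makes the forcing work uniformly for all $n\geq 2$.
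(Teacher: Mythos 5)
Your proof is correct, but it takes a genuinely different route from the paper's. The paper uses three-point substructures: it maps a configuration $\{\mathbf{a},\mathbf{b},\mathbf{c}\}$ in general position onto a degenerate configuration $\{\mathbf{a}',\mathbf{b}',\mathbf{c}'\}$ with $a_1'=b_1'$ and $a_i'=c_i'$ for $i\geq 2$, so that the region that should receive the fourth point $\mathbf{x}$ collapses to the single point $\mathbf{a}'$ and is therefore empty. You instead use two-point substructures, sending a \emph{colinear} pair $\mathbf{a}=(0,\ldots,0)$, $\mathbf{b}=(0,\ldots,0,1)$ to a non-colinear pair, and you observe that the $<_{\mathrm{lex}_1}$-interval between a colinear pair is forced onto the segment $\{(0,\ldots,0,t):0<t<1\}$ and hence lies entirely above $\mathbf{a}$ in the product order, whereas the interval between $(2,\ldots,2)$ and $(3,\ldots,3)$ contains points incomparable to $(2,\ldots,2)$. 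Your verification of the forcing (induction through the coordinates $1,\ldots,n-1$ on which $\mathbf{a}$ and $\mathbf{b}$ agree) is sound, and your closing remark about why the pair must differ only in the last coordinate visited by $<_{\mathrm{lex}_1}$ correctly identifies the one place the argument could go wrong for $n\geq 3$. Each approach buys something: yours is more economical and shows the stronger fact that the structure is not even $2$-homogeneous (the paper's argument only rules out $3$-homogeneity), while the paper's three-point configuration is reused later in Section 4, where the emptiness of the region determined by $\mathbf{a}'$, $\mathbf{b}'$, $\mathbf{c}'$ witnesses that $(\mathbb{Q}^n,<,<_{\mathrm{lex}_1},\ldots,<_{\mathrm{lex}_n})$ fails the density axioms of $\mathsf{DPO}_{n,<_1,\ldots,<_n}$. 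Both arguments ultimately exploit the colinearity that the sets $D_n$ are built to exclude.
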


\begin{proof}
    Consider points $\mathbf{a},\mathbf{b},\mathbf{c},\mathbf{x}\in\mathbb{Q}^n$ such that 
    \[
        a_1<x_1<b_1<c_1 \quad \text{and}\quad a_i<x_i<c_i<b_i
    \]
    for $2\leq i\leq n$, so that 
    \[
        \mathbf{a}<_{\mathrm{lex}_1}\mathbf{x}<_{\mathrm{lex}_1}\mathbf{b}<_{\mathrm{lex}_1}\mathbf{c}\quad\text{and}\quad\mathbf{a}<_{\mathrm{lex}_i}\mathbf{x}<_{\mathrm{lex}_i}\mathbf{c}<_{\mathrm{lex}_i}\mathbf{b}
    \]
    for $2\leq i\leq n$. 

    \begin{figure}[ht]
    \scalebox{.67}{
    \begin{tikzpicture}
        \tikzstyle{node_style}=[shape=circle,minimum size=0.01cm,very thin, draw=black, fill=black]
        \tikzstyle{red}=[shape=circle,minimum size=0.01cm,very thin, draw=gray, fill=gray]
        \draw[->] (0,0) -- (0,5);
        \draw[->] (0,0) -- (5,0);
        \draw[->] (8,0) -- (13,0);
        \draw[->] (8,0) -- (8,5);
        \node[red] (x) at (1.5,1.5) {};
        \node[left=3 pt of {(1.5,1.5)}, outer sep=3pt, fill=white] {$\mathbf{x}$};
        \node[node_style] (a) at (1,1) {};
        \node[left=3 pt of {(1,1)}, outer sep=3pt,fill=white] {$\mathbf{a}$};        
        \node[node_style] (b) at (2,4) {};
        \node[left=3 pt of {(2,4)}, outer sep=3pt,fill=white] {$\mathbf{b}$};
        \node[node_style] (c) at (4,2) {};
        \node[left=3 pt of {(4,2)}, outer sep=3pt,fill=white,] {$\mathbf{c}$};
        \node[node_style] (d) at (9,1) {};
        \node[right=3 pt of {(9,1)}, outer sep=3pt,fill=white] {$\mathbf{a}'$};
        \node[node_style] (e) at (9,4) {};
        \node[right=3 pt of {(9,4)}, outer sep=3pt,fill=white] {$\mathbf{b}'$};
        \node[node_style] (f) at (12,1) {};
        \node[right=3 pt of {(12,1)}, outer sep=3pt,fill=white] {$\mathbf{c}'$};

        \draw[thin,->] (a) -- (d);
        \draw[thin,->] (b) -- (e);
        \draw[thin,->] (c) -- (f);
        \draw[thin,dotted,->] (x) -- (d);
    \end{tikzpicture}}
    \caption{$(\mathbb{Q}^2,<,<_{\mathrm{lex}_1},<_{\mathrm{lex}_2})$ is not ultrahomogeneous.}
    \end{figure}
    
    Next, take points $\mathbf{a}',\mathbf{b}',\mathbf{c}'\in\mathbb{Q}^n$ such that
    \[
        a_1'=b_1'<c_1\quad\text{and}\quad a_i'=c_i'<b_i'
    \]
    for $2\leq i\leq n$, so that
    \[ \mathbf{a}'<_{\mathrm{lex}_1}\mathbf{b}'<_{\mathrm{lex}_1}\mathbf{c}'\quad\text{and}\quad\mathbf{a}'<_{\mathrm{lex}_i}\mathbf{c}'<_{\mathrm{lex}_i}\mathbf{b}'
    \]
    for $2\leq i\leq n$. The mapping $f$ given by $f(\mathbf{a})=\mathbf{a}'$, $f(\mathbf{b})=\mathbf{b}'$ and $f(\mathbf{c})=\mathbf{c}'$ is then an isomorphism from $(\{\mathbf{a},\mathbf{b},\mathbf{c}\},<,<_{\mathrm{lex}_1},\ldots,<_{\mathrm{lex}_n})$ to $(\{\mathbf{a}',\mathbf{b}',\mathbf{c}'\},<,<_{\mathrm{lex}_1},\ldots,<_{\mathrm{lex}_n})$. But, we claim there is no way to extend $f$ to an automorphism $g$ on all of $\mathbb{Q}^n$: If $g$ was such an automorphism, then $g(\mathbf{x})=\mathbf{x}'$ must satisfy:
    \[  
\mathbf{a}'<_{\mathrm{lex}_1}\mathbf{x}'<_{\mathrm{lex}_1}\mathbf{b}'\quad\text{and}\quad\mathbf{a}'<_{\mathrm{lex}_i}\mathbf{x}'<_{\mathrm{lex}_i}\mathbf{c}'
    \] 
    for $2\leq i\leq n$. The first of these implies $a_1'=x_1'=b_1'$, while the second implies $a_i'=x_i'=c_i'$ for $2\leq i\leq n$, and so $\mathbf{a}'=\mathbf{x}'$, a contradiction.
\end{proof}

\section{A finite axiomatization for \texorpdfstring{$\mathbf{D}_{n,<_1,\ldots,<_n}$}{Dn<1,...,<n)}}\label{sec:axioms}

Theorems \ref{thm:Fraisse1} and \ref{thm:D_n_Fraisse} tell us that $\mathbf{D}_{n,<_1,\ldots,<_n}$ is a canonical object; it is the unique countable ultrahomogeneous $n$-dimensional partial order with realizers, up to isomorphism, in which all finite $n$-dimensional partial orders with realizers embed. But there is another sense in which it is canonical, namely that any countable structure, in the same signature, that satisfies the same first-order sentences must be isomorphic to it. In other words, its \emph{theory} is \emph{$\aleph_0$-categorical}. This is because, for any $m\in\mathbb{N}$, whether one $m$-tuple from $D_n$ can be mapped to another by an automorphism of $\mathbf{D}_{n,<_1,\ldots,<_n}$ is entirely determined by the finitely many order relations among the coordinates of the $m$-tuples (this can be seen in the proof of Lemma \ref{lem:forth}). In other words, $\mathrm{Aut}(\mathbf{D}_{n,<_1,\ldots,<_n})$ is \emph{oligomorphic}; it has finitely many orbits on $D_n^m$ for any $m\in\mathbb{N}$. It then follows by the theorem of Engeler, Ryll-Nardzewski, and Svenonius (Theorem 7.3.1 in \cite{MR1221741}) that the theory of $\mathbf{D}_{n,<_1,\ldots,<_n}$ is $\aleph_0$-categorical.

In what follows, we will show a stronger result. We will describe a finite list of axioms, which are clearly satisfied by $\mathbf{D}_{n,<_1,\ldots,<_n}$, and then prove that they are $\aleph_0$-categorical. It was shown in \cite{MR0641492} that the reduct $\mathbf{D}_n=(D_n,<)$ has a finitely axiomatizable $\aleph_0$-categorical theory, but the added expressive power of the realizers affords us a conceptually simpler axiomatization for the expansion $\mathbf{D}_{n,<_1,\ldots,<_n}$. 

First, some terminology: If $<_1,\ldots,<_n$ are linear orders on a set $P$, a point $p\in P$ \emph{determines} $2n$ \emph{regions}, via the formulas (with free variable $u$):
\[
    u<_i p \quad\text{and}\quad  p<_iu
\]
for $i\leq n$. More generally, finitely many points in $P$ \emph{determine} finitely many \emph{regions} as intersections (i.e., conjunctions of the formulas above) of the regions determined by each point. We call such regions \emph{non-trivial} if the corresponding region in $D_n$, defined by the same formula, is non-empty.

\begin{example}\label{ex:6_points}
    Consider the six points lying in $D_2$ pictured below. The vertical and horizontal dotted lines through each point divide $D_2$ into the four regions determined by that point. Collectively, these six points determine non-trivial 49 regions. As each such region is an open rectangle (possibly unbounded on one or more sides), it is uniquely determined by at most four points.
    
\begin{figure}[ht]
\scalebox{0.5}{
\begin{tikzpicture}
    \draw[very thick][<->] (0,8) -- (0,-4);
    \draw[very thick][<->] (8,0) -- (-2,0);
    
    \tikzstyle{node_style}=[shape=circle,minimum size=0.01cm,very thin, draw=black, fill=black]
    \tikzstyle{node_outline}=[shape=circle,minimum size=0.5cm,very thin, draw=black]
    \tikzstyle{nodegray}=[shape=circle,minimum size=0.01cm,very thin, draw=gray, fill=gray]

    \newcounter{m}
    \newcounter{n}
    \foreach \m in {3,1,4,-.5,1.5,7}{
    \draw[thick, dotted, gray] (\m,8) -- (\m,-4);
     }
     \foreach \n in {2,3.5,6,-1,-3,-1.5}{
    \draw[thick, dotted, gray] (8,\n) -- (-2,\n);
     }
    
    \node[node_style] (i1) at (3,2) {};
    \node[node_style] (j1) at (3,2) {};
    \node[node_style] (i2) at (1,3.5) {};
    \node[node_style] (j2) at (1,3.5) {};
    \node[node_style] (i3) at (4,6) {};
    \node[node_style] (j3) at (4,6) {};
    \node[node_style] (i4) at (-.5,-1) {};
    \node[node_style] (i5) at (1.5,-3) {};
    \node[node_style] (i6) at (7,-1.5) {};
    \node[node_style] (j6) at (7,-1.5) {};
\end{tikzpicture}}
\caption{Six points and the regions they determine in $\mathbf{D}_{2,<_1,<_2}$.}
\end{figure}
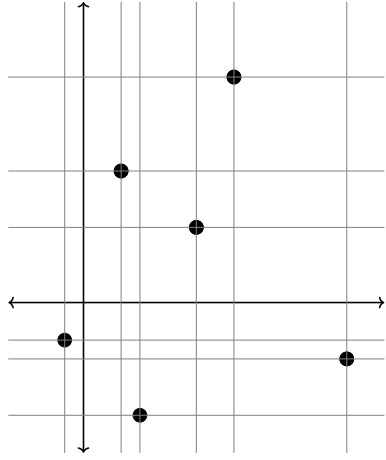   
\end{example}

The following axioms for a structure of the form $(P,<,<_1,\ldots,<_n)$ say that $<$ is a partial order, $<_1,\ldots,<_n$ are linear orders which realize $<$, and every non-trivial region determined by finitely many points in $P$ contains a point. Since such regions in $D_n$ are (possibly unbounded) $n$-dimensional open hypercubes, they are uniquely determined by at most $2^n$ points.

\begin{definition}
    The theory $\mathsf{DPO}_{n,<_1,\ldots,<_n}$ of \emph{dense $n$-dimensional partial orders with realizers} consists of the following sentences:
    \begin{itemize}
        \item $<$ is a partial ordering:
        \begin{align*}
            &\forall x(x\not< x)\\
            &\forall x\forall y\forall z(x<y\wedge y<z \rightarrow x<z).
        \end{align*}
        \item Each of $<_1,\ldots,<_n$ is a linear ordering:
        \begin{align*}
            &\forall x(x\not<_ix)\\
            &\forall x\forall y\forall z(x<_iy\wedge y<_i z \rightarrow x<_i z)\\
            &\forall x\forall y(x<_i y \vee x=y \vee y<_i x)
        \end{align*}
        for $i\leq n$.
        \item $<_1,\ldots,<_n$ realize $<$:
        \begin{align*}
            \forall x\forall y(x<y \leftrightarrow (x<_1y\wedge\cdots\wedge x<_n y)).
        \end{align*}
        \item $<_1,\ldots,<_n$ are dense and unbounded:
        \begin{align*}
            \forall x_1\cdots\forall x_{2^n} 
            \left[\bigwedge_{i=1}^{\#(n)}\left(\phi_i(x_1,\ldots,x_{2^n})\rightarrow
            \bigwedge_{j=1}^{r(i)}\psi^i_j(x_1,\ldots,x_{2^n})\right)\right],
        \end{align*}
        where:
        \begin{itemize}
            \item  $\#(n)$ is the number of possible configurations of $\leq 2^n$ many points with respect to the linear orders $<_1,\ldots,<_n$, 
            \item for each $i\leq\#(n)$, $\phi_i(x_1,\ldots,x_{2^n})$ asserts that $x_1,\ldots,x_{2^n}$ are in the $i$th such configuration (for configurations of fewer than $2^n$ points, we may assert that some of the $x_k$ are equal),
            \item $r(i)$ is the number of non-trivial regions determined by $x_1,\ldots,x_{2^n}$ in the $i$th configuration, and
            \item for each $j\leq r(i)$, $\psi^i_j(x_1,\ldots,x_{2^n})$ asserts the existence of a point the $j$th non-trivial region determined by $x_1,\ldots,x_{2^n}$. 
        \end{itemize}
    \end{itemize}
\end{definition}

It is clear that $\mathbf{D}_{n,<_1,\ldots,<_n}$ satisfies these axioms. Meanwhile, that $(\mathbb{Q}^n,<,<_{\mathrm{lex}_1},\ldots,<_{\mathrm{lex}_n})$ does not can be seen from the proof of Proposition \ref{prop:Qn_not_hom}; the region determined by the points $\mathbf{a}'$, $\mathbf{b}'$, and $\mathbf{c}'$ therein is empty.

\begin{example}
    Consider the same six points lying in $D_2$ described in Example \ref{ex:6_points} above, four of which we label as $\mathbf{x}$, $\mathbf{y}$, $\mathbf{z}$, and $\mathbf{w}$ as below:
    
\begin{figure}[ht]
\scalebox{0.5}{
\begin{tikzpicture}
    \draw[very thick][<->] (0,8) -- (0,-4);
    \draw[very thick][<->] (8,0) -- (-2,0);
    
    \tikzstyle{node_style}=[shape=circle,minimum size=0.01cm,very thin, draw=black, fill=black]
    \tikzstyle{node_outline}=[shape=circle,minimum size=0.5cm,very thin, draw=black]
    \tikzstyle{nodegray}=[shape=circle,minimum size=0.01cm,very thin, draw=gray, fill=gray]
    
    \newcounter{i}
    \newcounter{j}
    \foreach \i in {3,1,4,-.5,1.5,7}{
    \draw[thick, dotted, gray] (\i,8) -- (\i,-4);
     }
     \foreach \j in {2,3.5,6,-1,-3,-1.5}{
    \draw[thick, dotted, gray] (8,\j) -- (-2,\j);
     }
     \draw [very thick, dotted, fill=lightgray] (4,2) rectangle (7,3.5);

    \node[node_style] (i1) at (3,2) {};
    \node[below left =5 pt of {(3,2)}, outer sep=3pt] {\LARGE{$\mathbf{y}$}};
    \node[node_style] (i2) at (1,3.5) {};
    \node[below left =5 pt of {(1,3.5)}, outer sep=3pt] {\LARGE{$\mathbf{x}$}};
    \node[node_style] (i3) at (4,6) {};
    \node[below left =5 pt of {(4,6)}, outer sep=3pt] {\LARGE{$\mathbf{z}$}};
    \node[node_style] (i4) at (-.5,-1) {};
    \node[node_style] (i5) at (1.5,-3) {};
    \node[node_style] (i6) at (7,-1.5) {};
    \node[below left =5 pt of {(7,-1.5)}, outer sep=3pt] {\LARGE{$\mathbf{w}$}};
    \node[nodegray] (r) at (6,3) {};
    \node[below left =5 pt of {(6,3)}, outer sep=3pt] {\LARGE{$u$}};
\end{tikzpicture}}
\end{figure}

    The points $\mathbf{x}$, $\mathbf{y}$, $\mathbf{z}$, and $\mathbf{x}$ lie in the $i$th (say) configuration described by the formula $\phi_i(\mathbf{x},\mathbf{y},\mathbf{z},\mathbf{w})$:
    \[
        \mathbf{x}<_1\mathbf{y}\wedge\mathbf{y}<_1\mathbf{z} \wedge \mathbf{y}<_1\mathbf{z} \wedge \mathbf{z}<_2 \mathbf{y} \wedge \mathbf{y}<_2\mathbf{x} \wedge \mathbf{x}<_2\mathbf{z}.
    \]
    These points determine the $j$th (say) shaded region (in free variable $u$):
    \[
        \mathbf{z}<_1 u \wedge u<_1\mathbf{w} \wedge \mathbf{y}<_2 u \wedge u <_2\mathbf{x}.
    \]
    Then, $\psi^i_j(\mathbf{x},\mathbf{y},\mathbf{z},\mathbf{w})$ asserts the existence of a point $u$ in this region:
    \[
        \exists u(\mathbf{z}<_1 u \wedge u<_1\mathbf{w} \wedge \mathbf{y}<_2 u \wedge u <_2\mathbf{x}).
    \]
\end{example}

The next lemma is an analogue of Lemma \ref{lem:forth} for arbitrary dense $n$-dimensional partial orders with realizers.

\begin{lemma}\label{lem:back}
    Suppose that $\mathbf{D}$ satisfies $\mathsf{DPO}_{n,<_1,\ldots,<_n}$. If $\mathbf{P}\subseteq\mathbf{Q}$ are structures in $\mathcal{PO}_{n,<_1,\ldots,<_n}$ and $f$ is an embedding of $\mathbf{P}$ into $\mathbf{D}$, then there is an embedding $g$ of $\mathbf{Q}$ into $\mathbf{D}$ which extends $f$.
\end{lemma}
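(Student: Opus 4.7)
The plan is to mirror the proof of Lemma \ref{lem:forth}, replacing the use of the topological density of $D_n$ in $\mathbb{Q}^n$ with the corresponding density axioms of $\mathsf{DPO}_{n,<_1,\ldots,<_n}$. First I would reduce to the one-step case: assume $|Q|=|P|+1$, write $P=\{p_1,\ldots,p_m\}$, $Q=P\cup\{q\}$, and set $\mathbf{a}^j=f(p_j)\in\mathbf{D}$. Then the general case follows by iterating.

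For each $i\leq n$, look at how $q$ sits among $p_1,\ldots,p_m$ with respect to $<_i$: either $q$ is $<_i$-below all of $P$, $<_i$-above all of $P$, or has a $<_i$-immediate predecessor $p_{j_i^-}$ and/or a $<_i$-immediate successor $p_{j_i^+}$ in $P$. Let $S\subseteq\{\mathbf{a}^1,\ldots,\mathbf{a}^m\}$ consist of the images of all these immediate neighbours over all $i\leq n$, so $|S|\leq 2n\leq 2^n$ (since $n\geq 2$). The position of $q$ in $\mathbf{Q}$ is then captured, relative to $P$, by one specific region determined by $S$, namely the set of $u$ satisfying the conjunction of the formulas $\mathbf{a}^{j_i^-}<_i u$ (whenever $p_{j_i^-}$ exists) and $u<_i\mathbf{a}^{j_i^+}$ (whenever $p_{j_i^+}$ exists) for each $i\leq n$.

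The key step is to apply the density axiom in $\mathsf{DPO}_{n,<_1,\ldots,<_n}$ to $S$, padded with repetitions to a tuple of length $2^n$. Since $f$ is an embedding, the configuration of the chosen points in $\mathbf{D}$ with respect to $<_1,\ldots,<_n$ is the same as that of their $f$-preimages in $\mathbf{P}$, so exactly one of the formulas $\phi_i$ (in the notation of the axiom) holds of them in $\mathbf{D}$, and the region described above is one of the regions $\psi^i_j$ enumerated in the axiom. The axiom then produces a point $y\in\mathbf{D}$ in that region; define $g$ by extending $f$ with $g(q)=y$.

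The only thing left to check—and the main obstacle, such as it is—is that $g$ preserves each $<_i$ on all of $Q$, not merely relative to the immediate neighbours used in the density argument. This is where transitivity of $<_i$ does the work: for any $p_j\in P$ with $p_j<_i q$, either $p_j=p_{j_i^-}$, in which case $\mathbf{a}^j<_i y$ by choice of $y$, or $p_j<_i p_{j_i^-}$, and then $\mathbf{a}^j<_i\mathbf{a}^{j_i^-}<_i y$ (using that $f$ preserves $<_i$ on $P$). The symmetric case handles $q<_i p_j$. Since $<_1,\ldots,<_n$ realize $<$, this shows $g$ preserves $<$ as well, so $g$ is an embedding extending $f$, as required.
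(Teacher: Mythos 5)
Your proof is correct and follows essentially the same route as the paper's: reduce to the one-step case, identify the region of $\mathbf{D}$ corresponding to the region of $q$ determined by (at most) $2^n$ of the $p_j$'s, and invoke the density axiom of $\mathsf{DPO}_{n,<_1,\ldots,<_n}$ to find the image of $q$. The only differences are cosmetic: you explicitly identify the relevant $\leq 2^n$ points as the $<_i$-immediate neighbours of $q$ and spell out the transitivity check that $g$ preserves each $<_i$ against all of $P$, both of which the paper leaves implicit (it also first transports $\mathbf{P}\subseteq\mathbf{Q}$ into $\mathbf{D}_{n,<_1,\ldots,<_n}$ via Lemma \ref{lem:forth}, a step your version does not need).
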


\begin{proof}
    Suppose that $\mathbf{D}=(D,<,<_1,\ldots,<_n)$. By Lemma \ref{lem:forth}, we may assume that $\mathbf{P}=(P,<,<_1,\ldots,<_n)$ and $\mathbf{Q}=(Q,<,<_1,\ldots,<_n)$ are substructures of $\mathbf{D}_{n,<_1,\ldots,<_n}$. As in the proof of Lemma \ref{lem:forth}, we may further assume that $P=\{\mathbf{p}_1,\ldots,\mathbf{p}_m\}$, $Q=P\cup\{\mathbf{q}\}$, and write $d_i=f(\mathbf{p}_i)$ for $i\leq m$.

    The points $\mathbf{p}_1,\ldots,\mathbf{p}_m$ define finitely many regions in $\mathbf{D}_{n,<_1,\ldots,<_n}$ as above, and $\mathbf{q}$ lies in a unique such region determined by (at most) $2^n$ of the $\mathbf{p}_i$'s, say $\mathbf{p}_{i_1},\ldots,\mathbf{p}_{i_{2^n}}$ (if fewer than $2^n$-many points are needed, simply repeat points). These points have a configuration with respect to the $<_1,\ldots,<_n$ described by some formula $\phi_i(x_1,\ldots,x_{2^n})$, that is, $\phi_i(\mathbf{p}_{i_1},\ldots,\mathbf{p}_{i_{2^n}})$ holds in $\mathbf{D}_{n,<_1,\ldots,<_n}$. Likewise, $\mathbf{q}$ is a witness to some existential assertion $\psi^i_j(\mathbf{p}_{i_1},\ldots,\mathbf{p}_{i_{2^n}})$. Since $f$ is an embedding, $\phi_i(d_{i_1},\ldots,d_{i_{2^n}})$ holds in $\mathbf{D}$. Then, by the fact that $\mathbf{D}$ satisfies $\mathsf{DPO}_{n,<_1,\ldots,<_n}$, there must be a witness for $\psi^i_j(d_{i_1},\ldots,d_{i_{2^n}})$ in $D$, call it $e$. Extending $f$ to $g$ by declaring $g(\mathbf{q})=e$ will give an embedding of $\mathbf{Q}$ into $\mathbf{D}$, as desired.
\end{proof}

\begin{theorem}\label{thm:axiomatization}
    $\mathbf{D}_{n,<_1,\ldots,<_n}$ is the unique countable dense $n$-dimensional partial order with realizers, up to isomorphism.
\end{theorem}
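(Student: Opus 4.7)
The plan is to prove the theorem by a standard back-and-forth argument, where the essential combinatorial content has already been absorbed into Lemmas~\ref{lem:forth} and~\ref{lem:back}. Let $\mathbf{D}=(D,<,<_1,\ldots,<_n)$ be an arbitrary countable model of $\mathsf{DPO}_{n,<_1,\ldots,<_n}$. The goal is to construct an isomorphism $F\colon\mathbf{D}\to\mathbf{D}_{n,<_1,\ldots,<_n}$. Note first that every finite substructure of $\mathbf{D}$ lies in $\mathcal{PO}_{n,<_1,\ldots,<_n}$, since the axioms assert that $<_1,\ldots,<_n$ are linear orders whose intersection is $<$; this is what allows us to apply both lemmas to finite substructures of $\mathbf{D}$.

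Enumerate $D=\{d_0,d_1,d_2,\ldots\}$ and $D_n=\{\mathbf{e}_0,\mathbf{e}_1,\mathbf{e}_2,\ldots\}$. Starting from the empty partial isomorphism $f_0=\varnothing$, I would recursively construct an increasing chain of finite partial isomorphisms $f_0\subseteq f_1\subseteq f_2\subseteq\cdots$ from finite substructures of $\mathbf{D}$ to finite substructures of $\mathbf{D}_{n,<_1,\ldots,<_n}$. At stage $2k+1$ (the ``forth'' step), if $d_k$ is not already in the domain of $f_{2k}$, apply Lemma~\ref{lem:forth} to the embedding $f_{2k}\colon\mathrm{dom}(f_{2k})\hookrightarrow\mathbf{D}_{n,<_1,\ldots,<_n}$ with $\mathbf{Q}$ equal to the finite substructure of $\mathbf{D}$ generated by $\mathrm{dom}(f_{2k})\cup\{d_k\}$; this gives $f_{2k+1}$ whose domain contains $d_k$. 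At stage $2k+2$ (the ``back'' step), if $\mathbf{e}_k$ is not already in the range of $f_{2k+1}$, apply Lemma~\ref{lem:back} to the inverse embedding $f_{2k+1}^{-1}\colon\mathrm{ran}(f_{2k+1})\hookrightarrow\mathbf{D}$ with $\mathbf{Q}$ equal to the finite substructure of $\mathbf{D}_{n,<_1,\ldots,<_n}$ generated by $\mathrm{ran}(f_{2k+1})\cup\{\mathbf{e}_k\}$; invert the resulting embedding to obtain $f_{2k+2}$, whose range contains $\mathbf{e}_k$.

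Setting $F=\bigcup_{k\in\mathbb{N}}f_k$, the forth steps ensure $\mathrm{dom}(F)=D$ and the back steps ensure $\mathrm{ran}(F)=D_n$, so $F$ is a bijection. Each $f_k$ preserves all the relations $<$, $<_1,\ldots,<_n$ in both directions, so $F$ is an isomorphism.

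The only subtle point, and the closest thing to an obstacle, is verifying that Lemma~\ref{lem:back} is applicable in the back step: that lemma is stated for embeddings into an arbitrary model $\mathbf{D}$ of $\mathsf{DPO}_{n,<_1,\ldots,<_n}$, while the back step requires the target to be $\mathbf{D}$, with source a finite substructure of $\mathbf{D}_{n,<_1,\ldots,<_n}$. This is precisely how Lemma~\ref{lem:back} is set up, since $\mathbf{D}_{n,<_1,\ldots,<_n}$ itself satisfies the axioms, so any finite substructure of it lies in $\mathcal{PO}_{n,<_1,\ldots,<_n}$. Thus the back-and-forth proceeds without incident and yields the desired isomorphism, proving $\aleph_0$-categoricity of $\mathsf{DPO}_{n,<_1,\ldots,<_n}$.
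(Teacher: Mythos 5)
Your proposal is correct and is essentially identical to the paper's proof, which likewise constructs the isomorphism by a back-and-forth argument using Lemma~\ref{lem:forth} for the ``forth'' steps and Lemma~\ref{lem:back} for the ``back'' steps; you have merely written out the details that the paper leaves to the reader. The only cosmetic difference is that the paper seeds the recursion with a one-point partial isomorphism rather than the empty map, since the lemmas are stated for non-empty finite structures.
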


\begin{proof}
   Given a countable dense $n$-dimensional partial order with realizers $\mathbf{D}$, an isomorphism from $\mathbf{D}$ to $\mathbf{D}_{n,<_1,\ldots,<_n}$ can now be constructed by a back-and-forth argument, using Lemma \ref{lem:forth} to go ``forth'' and Lemma \ref{lem:back} to go ``back'', as in the proof of Theorem \ref{thm:D_n_Fraisse}.
\end{proof}

We end this section with a consequence regarding the homogeneity of the reduct $\mathbf{D}_n=(D_n,<)$:

\begin{corollary}\label{cor:D_n_2-hom}
   $\mathbf{D}_n$ is $2$-homogeneous.
\end{corollary}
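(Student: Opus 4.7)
The plan is to reduce to the ultrahomogeneity of $\mathbf{D}_{n,<_1,\ldots,<_n}$ (Theorem~\ref{thm:D_n_Fraisse}), augmented by realizer-permuting automorphisms of $\mathbf{D}_n$ furnished by Theorem~\ref{thm:axiomatization}. The key observation is that for each permutation $\sigma \in S_n$, the structure $(D_n, <, <_{\sigma(1)}, \ldots, <_{\sigma(n)})$ again satisfies $\mathsf{DPO}_{n,<_1,\ldots,<_n}$ (the axioms being symmetric in the realizer labels), so Theorem~\ref{thm:axiomatization} provides an isomorphism with $\mathbf{D}_{n,<_1,\ldots,<_n}$ whose underlying bijection of $D_n$ preserves $<$. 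This bijection is thus an automorphism $\psi_\sigma$ of $\mathbf{D}_n$ satisfying $\{i : \psi_\sigma(\mathbf{x}) <_i \psi_\sigma(\mathbf{y})\} = \sigma(\{i : \mathbf{x} <_i \mathbf{y}\})$ for all $\mathbf{x}, \mathbf{y} \in D_n$.

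Let $f: \mathbf{P} \to \mathbf{Q}$ be an isomorphism between substructures of $\mathbf{D}_n$ with $|P|, |Q| \leq 2$. When $|P| \leq 1$, or when $\mathbf{P} = \{\mathbf{a}, \mathbf{b}\}$ with $\mathbf{a} < \mathbf{b}$, the realizer profile is determined by $<$ alone — we have $\mathbf{a} <_i \mathbf{b}$ for every $i$, and likewise for the image — so $f$ is automatically an isomorphism of substructures of $\mathbf{D}_{n,<_1,\ldots,<_n}$ and extends via Theorem~\ref{thm:D_n_Fraisse}. For the main case where $\mathbf{P} = \{\mathbf{a}, \mathbf{b}\}$ and $\mathbf{Q} = \{\mathbf{c}, \mathbf{d}\}$ are both 2-antichains with $f(\mathbf{a}) = \mathbf{c}$ and $f(\mathbf{b}) = \mathbf{d}$, I would set $S = \{i : \mathbf{a} <_i \mathbf{b}\}$ and $T = \{i : \mathbf{c} <_i \mathbf{d}\}$ (non-empty proper subsets of $\{1, \ldots, n\}$), pick $\sigma \in S_n$ with $\sigma(S) = T$, and observe that $\psi_\sigma^{-1} \circ f$ sends $\{\mathbf{a}, \mathbf{b}\}$ to a pair with realizer set $\sigma^{-1}(T) = S$, matching the original. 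Thus $\psi_\sigma^{-1} \circ f$ is an isomorphism of substructures in $\mathbf{D}_{n,<_1,\ldots,<_n}$, extending by Theorem~\ref{thm:D_n_Fraisse} to some automorphism $g$ of the expansion; the composition $\psi_\sigma \circ g$ is then an automorphism of $\mathbf{D}_n$ extending $f$.

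The hard part will be the combinatorial matching step: when $|S| \neq |T|$, no permutation $\sigma \in S_n$ sends $S$ to $T$. In that event, I would first try the alternative bijection $f'(\mathbf{a}) = \mathbf{d}$, $f'(\mathbf{b}) = \mathbf{c}$, which replaces $T$ by its complement $\{1, \ldots, n\} \setminus T$. If neither choice admits a suitable $\sigma$ within $S_n$, the remedy is to invoke Theorem~\ref{thm:axiomatization} more broadly — applying it to realizer sets on $D_n$ that are not mere permutations of the coordinate orderings — to produce additional automorphisms of $\mathbf{D}_n$ capable of altering the size of the profile $S$ and closing the matching.
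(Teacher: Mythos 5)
Your first two paragraphs are sound and, for $n=2$, already constitute a complete proof: there $S$ and $T$ are nonempty proper subsets of $\{1,2\}$, hence singletons, so a $\sigma$ with $\sigma(S)=T$ always exists and the composition $\psi_\sigma\circ g$ finishes the argument. The device of getting the auxiliary automorphism $\psi_\sigma$ from Theorem~\ref{thm:axiomatization} applied to a relabelled realizer tuple is also essentially what the paper does (the paper uses coordinate \emph{reversals} rather than permutations; see below).

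The ``hard part'' you flag, however, is a genuine gap that neither of your proposed remedies can close. Passing to $f'$ is not permitted: $2$-homogeneity requires extending the \emph{given} isomorphism $f$, and extending a different bijection of the same two antichains says nothing about $f$. And there are no ``additional automorphisms'' to be mined from realizer tuples that are not permutations of the coordinate orders, because Lemma~\ref{lem:permute_realizers_D_n} says $\mathcal{R}_n$ consists of exactly the $n!$ permutations of $(<_1,\ldots,<_n)$. Combining that with Lemma~\ref{lem:logic_action_transitive}, every $g\in\mathrm{Aut}(\mathbf{D}_n)$ induces a single $\pi\in S_n$ with $\{i:g(\mathbf{x})<_i g(\mathbf{y})\}=\pi(\{i:\mathbf{x}<_i\mathbf{y}\})$ for all $\mathbf{x},\mathbf{y}\in D_n$, so the cardinality $|S|$ is an invariant of an ordered incomparable pair under the full group $\mathrm{Aut}(\mathbf{D}_n)$. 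For $n\geq 3$ one can choose $2$-antichains with $|S|=1$ and $|T|=2$, and then the given $f$ does not extend at all: the obstruction you located is real, not an artifact of your method. For what it is worth, the paper's own proof attempts to dispose of this case by replacing $<_i$ with $>_i$ on the coordinates where the two pairs disagree and invoking Theorem~\ref{thm:axiomatization}; but a tuple containing reversed coordinate orders no longer realizes $<$ (its intersection is a different partial order), so the reversed structure does not satisfy $\mathsf{DPO}_{n,<_1,\ldots,<_n}$ and that step founders on the same wall you ran into. In short: your write-up correctly isolates where the argument breaks, but the case $|S|\neq|T|$ cannot be repaired as proposed, and only the cases where $|S|=|T|$ (in particular, all of $n=2$) go through.
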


\begin{proof}
    Clearly, by Theorem \ref{thm:D_n_Fraisse}, any assignment of one point to another can be extended to an automorphism of $\mathbf{D}_{n,<_1,\ldots,<_n}$, so $\mathbf{D}_n$ is $1$-homogeneous.

    Suppose that $\mathbf{a}$ and $\mathbf{b}$ are distinct points in $D_n$, and $f$ is an isomorphism of $(\{(\mathbf{a},\mathbf{b}\},<)$ sending $\mathbf{a}$ to $\mathbf{a}'$ and $\mathbf{b}$ to $\mathbf{b}'$, also in $D_n$. If $\mathbf{a}'$ and $\mathbf{b}'$ have the same relationships with respect to $<_1,\ldots,<_n$ as $\mathbf{a}$ and $\mathbf{b}$, then $f$ preserves $<_1,\ldots,<_n$, and so extends to an automorphism of $\mathbf{D}_{n,<_1,\ldots,<_n}$, and thus of $\mathbf{D}_n$, by Theorem \ref{thm:D_n_Fraisse}.
    
    Suppose that, for some $i_1,\ldots,i_k\leq n$, $\mathbf{a}'$ and $\mathbf{b}'$ have the opposite relationship with respect to $<_{i_1},\ldots,<_{i_k}$ as $\mathbf{a}$ and $\mathbf{b}$. Observe that $(D_n,<,<_1^{\iota(1)},\ldots,<_n^{\iota(n)})$, where $<_i^{\iota(i)}\;=\;<_i$ if $i\notin\{i_1,\ldots,i_k\}$, and $<_i^{\iota(i)}\;=\;>_i$ if $i\in\{i_1,\ldots,i_k\}$, also satisfies $\mathsf{DPO}_{n,<_1,\ldots,<_n}$, so by Theorem \ref{thm:axiomatization}, is isomorphic to $\mathbf{D}_{n,<_1,\ldots,<_n}$ via some $g$. Hence, $g\circ f$ is an isomorphsim of $(\{\mathbf{a},\mathbf{b}\},<,<_1,\ldots,<_n)$ to some other substructure of $\mathbf{D}_{n,<_1,\ldots,<_n}$, and thus, by Theorem \ref{thm:D_n_Fraisse}, extends to an automorphism $h$ of $\mathbf{D}_{n,<_1,\ldots,<_n}$. But then, $g^{-1}\circ h$ is an automorphism of $\mathbf{D}_n$ which extends $f$.
\end{proof}

\section{The Ramsey property and extreme amenability of \texorpdfstring{$\mathrm{Aut}(\mathbf{D}_{n,<_1,\ldots,<_n})$}{Aut(Dn<1,...,<n)}}\label{sec:Ramsey}

In this section, we will prove a Ramsey-theoretic property of the class $\mathcal{PO}_{n,<_1,\ldots,<_n}$, and then use it to derive a strong fixed-point property for the group $\mathrm{Aut}(\mathbf{D}_{n,<_1,\ldots,<_n})$ of all automorphisms of $\mathbf{D}_{n,<_1,\ldots,<_n}$.

First, we recall a definition from topological dynamics: A topological group $G$ is \emph{extremely amenable} if every continuous action of $G$ on a compact space $X$ (i.e.,~the induced map $G\times X\to X:(g,x)\mapsto g\cdot x$ is continuous) has a fixed point.\footnote{This definition strengthens one of the many characterizations for a locally compact group $G$ to be \emph{amenable}, namely that  every affine action of $G$ on a compact convex set has a fixed point. However, \emph{no} locally compact group is extremely amenable, a consequence of a result of Veech \cite{MR0467705}, see also Appendix 2 of \cite{MR2140630}.} Note that all topological spaces and groups considered here are assumed to be Hausdorff.

The group $\mathrm{Aut}(\mathbf{M})$ of all automorphisms of a countably-infinite structure $\mathbf{M}$ becomes a Polish (i.e.,~separable and complete metrizable) topological group when endowed with the topology of pointwise convergence. In the case that $\mathbf{M}$ is the Fra\"iss\'e limit of a Fra\"iss\'e \emph{order} class $\mathcal{K}$, meaning one in which all of its structures are endowed with a linear order, the work of Kechris, Pestov, and Todor\v{c}evi\'{c} \cite{MR2140630} characterizes extreme amenability of $\mathrm{Aut}(\mathbf{M})$ in terms of a Ramsey-theoretic property of $\mathcal{K}$.

Suppose that $\mathcal{K}$ is a hereditary class of finite structures. Given $\mathbf{A},\mathbf{B}\in\mathcal{K}$ with $\mathbf{A}\preceq \mathbf{B}$, we write
\[
    \binom{\mathbf{B}}{\mathbf{A}}=\{\mathbf{A}_0:\mathbf{A}_0 \subseteq \mathbf{B} \text{ and } \mathbf{A}_0\cong \mathbf{A}\}.
\]
We call elements of $\binom{\mathbf{B}}{\mathbf{A}}$ ``copies of $\mathbf{A}$ in $\mathbf{B}$''. We say that $\mathcal{K}$ has the \emph{Ramsey property} if for any $\mathbf{A}\preceq \mathbf{B}$ in $\mathcal{K}$ and $k\in\mathbb{N}^+$, there is a $\mathbf{C}\in\mathcal{K}$ with $\mathbf{B}\preceq \mathbf{C}$ and such that for any \emph{$k$-colouring} $c:\binom{\mathbf{C}}{\mathbf{A}}\to\{1,\ldots,k\}$, there is a $\mathbf{B}_0\in\binom{\mathbf{C}}{\mathbf{B}}$ which is \emph{monochromatic}, i.e.,~$|c[\binom{\mathbf{B}_0}{\mathbf{A}}]|=1$. 

\begin{theorem}[Kechris--Pestov--Todor\v{c}evi\'{c} \cite{MR2140630}]\label{thm:KPT}
    Let $\mathcal{K}$ be a Fra\"iss\'e order class and $\mathbf{M}$ its Fra\"iss\'e limit. Then, $\mathrm{Aut}(\mathbf{M})$ is extremely amenable if and only if $\mathcal{K}$ has the Ramsey property.
\end{theorem}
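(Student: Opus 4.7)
The plan is to prove both implications of the equivalence, since neither direction is obvious from the definitions alone. For the direction \emph{Ramsey property} $\Rightarrow$ \emph{extreme amenability}, I would exploit that $G=\mathrm{Aut}(\mathbf{M})$ has a neighborhood basis at the identity consisting of the pointwise stabilizers $G_{\mathbf{A}}$ of finite substructures $\mathbf{A}\subseteq \mathbf{M}$. To show an arbitrary continuous action $G\curvearrowright X$ on a compact Hausdorff space has a fixed point, I would fix some $x_0\in X$ and seek a limit point in $\overline{G\cdot x_0}$ whose orbit is invariant under arbitrarily large stabilizers $G_{\mathbf{A}}$.

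Concretely, given a finite open cover $\{V_1,\ldots,V_k\}$ of $X$ and a finite $\mathbf{A}\subseteq \mathbf{M}$, I would define a coloring $c:\binom{\mathbf{M}}{\mathbf{A}}\to\{1,\ldots,k\}$ as follows. For each $\mathbf{A}_0\in\binom{\mathbf{M}}{\mathbf{A}}$, use ultrahomogeneity to pick some $g_{\mathbf{A}_0}\in G$ sending a fixed copy of $\mathbf{A}$ to $\mathbf{A}_0$, and assign $c(\mathbf{A}_0)$ to be the least $i$ with $g_{\mathbf{A}_0}^{-1}x_0\in V_i$. The assumption that $\mathcal{K}$ is an \emph{order} class ensures that any two such $g_{\mathbf{A}_0}$ differ by an element of $G_{\mathbf{A}_0}$, so a suitable normalization makes $c$ well-defined. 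Applying the Ramsey property to $\mathbf{A}\preceq \mathbf{B}$ gives a monochromatic copy of $\mathbf{B}$ inside some $\mathbf{C}\subseteq \mathbf{M}$, which translates into a single open set $V_i$ containing $g^{-1}x_0$ for every $g$ sending $\mathbf{A}$ into this copy of $\mathbf{B}$. A compactness/net argument as $\mathbf{A}$ exhausts $\mathbf{M}$ and the covers refine produces the desired fixed point.

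For the reverse direction \emph{extreme amenability} $\Rightarrow$ \emph{Ramsey property}, I would argue by contradiction. Suppose some $\mathbf{A}\preceq \mathbf{B}$ and $k\in\mathbb{N}^+$ witness the failure of RP, so that every $\mathbf{C}\succeq \mathbf{B}$ in $\mathcal{K}$ admits a bad $k$-coloring of $\binom{\mathbf{C}}{\mathbf{A}}$. A standard Kőnig-style compactness argument in the compact product space $X=\{1,\ldots,k\}^{\binom{\mathbf{M}}{\mathbf{A}}}$ produces a single coloring $c\in X$ such that no copy of $\mathbf{B}$ inside $\mathbf{M}$ is monochromatic. The group $G$ acts continuously on $X$, so it acts on the compact $G$-invariant subspace $X_0=\overline{G\cdot c}$. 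Extreme amenability yields a fixed point $c^*\in X_0$; but ultrahomogeneity of $\mathbf{M}$ combined with the order-class hypothesis implies that $G$ acts transitively on $\binom{\mathbf{M}}{\mathbf{A}}$, so any $G$-fixed coloring is constant. Since $c^*\in\overline{G\cdot c}$, for any fixed $\mathbf{B}_0\in\binom{\mathbf{M}}{\mathbf{B}}$ there is $g\in G$ with $g\cdot c$ agreeing with $c^*$ on the finite set $\binom{\mathbf{B}_0}{\mathbf{A}}$, forcing $c$ to be monochromatic on $\binom{g^{-1}\mathbf{B}_0}{\mathbf{A}}$, contradicting the choice of $c$.

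The main obstacle is the forward direction (RP $\Rightarrow$ EA): the coloring $c$ is only well-defined when distinct elements $g_{\mathbf{A}_0}$ yield the same color, which is precisely where the order-class assumption is indispensable, as it makes embeddings of one copy of $\mathbf{A}$ onto another unique. The passage from monochromatic finite copies to an actual fixed point requires a careful limit argument, taking an ultrafilter limit of the points $g_{\mathbf{B}_0}\cdot x_0$ over ever larger monochromatic $\mathbf{B}_0$ and using compactness of $X$ together with shrinking covers; verifying that the resulting limit point is genuinely $G$-fixed is the delicate step, and is the heart of the Kechris--Pestov--Todor\v{c}evi\'{c} correspondence.
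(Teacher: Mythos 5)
This is a quoted theorem: the paper states the Kechris--Pestov--Todor\v{c}evi\'{c} correspondence with a citation to \cite{MR2140630} and gives no proof of its own, so there is nothing in the paper to compare your argument against. Judging your sketch on its own merits: the direction (extreme amenability $\Rightarrow$ Ramsey property) is the standard argument and is essentially complete --- the compactness step producing a single bad colouring of $\binom{\mathbf{M}}{\mathbf{A}}$, the logic action on $\{1,\ldots,k\}^{\binom{\mathbf{M}}{\mathbf{A}}}$, transitivity of $\mathrm{Aut}(\mathbf{M})$ on copies of $\mathbf{A}$ forcing any fixed colouring to be constant, and the finite approximation on $\binom{\mathbf{B}_0}{\mathbf{A}}$ all go through as you describe.

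The forward direction (Ramsey property $\Rightarrow$ extreme amenability) has a genuine gap at the well-definedness of your colouring. You claim that any two choices of $g_{\mathbf{A}_0}$ differ by an element of the stabilizer and that this makes $c(\mathbf{A}_0)$, defined via the position of $g_{\mathbf{A}_0}^{-1}x_0$ in the cover, well-defined. It does not: if $g'=gh$ with $h$ in the pointwise stabilizer $G_{\mathbf{A}}$, then $g'^{-1}x_0=h^{-1}g^{-1}x_0$, and $G_{\mathbf{A}}$ acts nontrivially on $X$, so the two candidate points can land in different elements of the cover. The order-class hypothesis only buys you that copies of $\mathbf{A}$ correspond bijectively to left cosets $gG_{\mathbf{A}}$ (rigidity of finite linearly ordered structures); it does not control the orbit $G_{\mathbf{A}}\cdot(g^{-1}x_0)$. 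The missing ingredient is the uniform continuity of a continuous action on a compact space: for every entourage $U$ of $X$ there is a basic open subgroup $G_{\mathbf{A}'}$ (for some larger finite $\mathbf{A}'\supseteq\mathbf{A}$) with $(vy,y)\in U$ for all $v\in G_{\mathbf{A}'}$ and $y\in X$, so that after enlarging $\mathbf{A}$ the ambiguity in $g^{-1}x_0$ is smaller than the Lebesgue number of the cover. Only then does a monochromatic copy of $\mathbf{B}$ yield small oscillation of the orbit map, and the passage from this finite oscillation stability to an actual fixed point (via the greatest ambit or an ultrafilter limit, as you indicate) is the remaining substantial step. As written, your colouring argument would go through verbatim for classes without the order hypothesis, which cannot be right, since the correspondence fails there.
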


As mentioned in the introduction, the classical form of Ramsey's Theorem can be viewed as saying that the class $\mathcal{LO}$ of all finite linear orders has the Ramsey property and thus, by Theorem \ref{thm:KPT}, $\mathrm{Aut}(\mathbb{Q},<)$ is extremely amenable. Likewise, the class $\mathcal{PO}_{\prec}$ of all finite partial orders with a linear extension (i.e., structures $(P,<,\prec)$ where $<$ is a partial order on $P$ and $\prec$ a linear order extending $<$) has the Ramsey property (Theorem 2.4 \cite{MR2128088}, see also \cite{MR0437351}, \cite{MR0818500}, and \cite{MR2948746}), and so the automorphism group of its Fra\"iss\'e limit is also extremely amenable.

However, the class $\mathcal{PO}$ of all finite partial orders does not have the Ramsey property, and neither does the class $\mathcal{PO}_n$ of all finite $n$-dimensional partial orders, as the following example shows:

\begin{example}
    Let $(A,<)$ be the partial order given by $A=\{a,b,c\}$, where $a<b$ and $c$ is incomparable with both $a$ and $b$. Let $(B,<)$ be the product of $(\{1,2,3\},<)$ with itself (i.e., a $\mathbf{3}^2$-grid in the terminology below), so $(A,<)\preceq(B,<)$. Let $(C,<)$ be any finite partial order such that $(B,<)\preceq(C,<)$ and $\prec$ a linear order on $(C,<)$ which extends $<$ and does not satisfy $a\prec c\prec b$. Define a $2$-colouring on the copies of $(A,<)$ in $(C,<)$ based on how the images of $a$ and $c$ are ordered with respect to $\prec$. Then, on any copy of $(B,<)$ in $(C,<)$, both colours must be realized.
    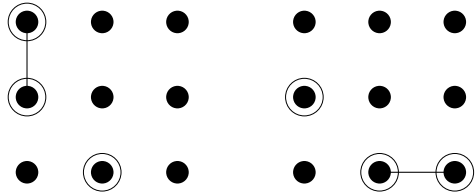
\begin{figure}[ht]
    \centering
    \begin{tikzpicture}
        \draw[white] (0,0) grid (2,2);
        \newcounter{r}
        \newcounter{s}
        \forloop{r}{0}{\value{r}<3}{
        \forloop{s}{0}{\value{s}<3}{
        \fill[black] (\value{r},\value{s}) circle(.15);
        }
        }
        \draw (0,1) -- (0,2);
        \foreach \p in {(0,1),(0,2),(1,0)}
        \node[shape=circle , minimum size=.5cm, very thin, draw=black] () at \p {} ;
    \end{tikzpicture}
    \quad\quad\quad
    \begin{tikzpicture}
        \draw[white] (0,0) grid (2,2);
        \newcounter{u}
        \newcounter{v}
        \forloop{u}{0}{\value{u}<3}{
        \forloop{v}{0}{\value{v}<3}{
        \fill[black] (\value{u},\value{v}) circle(.15);
        }
        }
        \draw (1,0) -- (2,0);
        \foreach \p in {(1,0),(2,0),(0,1)}
        \node[shape=circle , minimum size=.5cm, very thin, draw=black] () at \p {} ;
    \end{tikzpicture}
    \caption{Two different ``colours'' of $(A,<)$ in $(B,<)$.}
    \end{figure}
\end{example}

The main result of this section is the following:

\begin{theorem}\label{thm:PO_n_RP}
   $\mathcal{PO}_{n,<_1,\ldots,<_n}$ has the Ramsey property.
\end{theorem}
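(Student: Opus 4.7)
The plan is to reduce the Ramsey property to the Product Ramsey Theorem of Graham, Rothschild, and Spencer \cite{MR3288500} by taking as the Ramsey object a sufficiently large product of chains. The critical observation, already noted in Section \ref{sec:posets}, is that for any $c \geq 1$, the $n$-fold product $(\{1,\ldots,c\}^n, <, <_{\mathrm{lex}_1}, \ldots, <_{\mathrm{lex}_n})$, equipped with the product order and the $n$ lexicographic orders as realizers, belongs to $\mathcal{PO}_{n,<_1,\ldots,<_n}$, even though it has colinear points.

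Fix $\mathbf{A} \preceq \mathbf{B}$ in $\mathcal{PO}_{n,<_1,\ldots,<_n}$ with $|A| = a$, $|B| = b$, and $k \in \mathbb{N}^+$. First, I apply the Product Ramsey Theorem with all parameters $a_i = a$ and $b_i = b$, obtaining a $c \in \mathbb{N}$ such that any $k$-colouring of $\binom{\{1,\ldots,c\}}{a}^n$ admits a monochromatic product-subgrid $\binom{B_1}{a} \times \cdots \times \binom{B_n}{a}$ with each $B_i \in \binom{\{1,\ldots,c\}}{b}$. I then take $\mathbf{C} := (\{1,\ldots,c\}^n, <, <_{\mathrm{lex}_1}, \ldots, <_{\mathrm{lex}_n})$ as the candidate Ramsey object.

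The bridge between the two colouring problems is a \emph{rank-transfer map} $\iota_{\mathbf{A}} \colon \binom{\{1,\ldots,c\}}{a}^n \to \binom{\mathbf{C}}{\mathbf{A}}$. Embed $\mathbf{A}$ into $\{1,\ldots,a\}^n$ via the rank functions of $<_1^{\mathbf{A}}, \ldots, <_n^{\mathbf{A}}$; then, given $(S_1, \ldots, S_n)$, postcompose with the order-preserving bijection $\{1,\ldots,a\} \to S_i$ in the $i$-th coordinate to obtain a set $\iota_{\mathbf{A}}(S_1,\ldots,S_n) \subseteq \{1,\ldots,c\}^n$ of size $a$. A short verification shows this set has $a$ distinct $i$-th coordinates, namely exactly $S_i$, which forces $<_{\mathrm{lex}_i}$ on it to agree with $<_i^{\mathbf{A}}$ on $\mathbf{A}$; hence it is a copy of $\mathbf{A}$ in $\mathbf{C}$, and $\iota_{\mathbf{A}}$ is injective since $S_i$ can be read off the $i$-th coordinates. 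Define $\iota_{\mathbf{B}}$ analogously. Given $\chi \colon \binom{\mathbf{C}}{\mathbf{A}} \to \{1,\ldots,k\}$, I pull back to $\chi' := \chi \circ \iota_{\mathbf{A}}$; Product Ramsey then supplies $B_1, \ldots, B_n$ with $\chi'$ constant on $\binom{B_1}{a} \times \cdots \times \binom{B_n}{a}$, and I set $\mathbf{B}' := \iota_{\mathbf{B}}(B_1, \ldots, B_n)$.

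The step I expect to require the most care, and hence the main obstacle, is verifying that $\mathbf{B}'$ is $\chi$-monochromatic. The crux is that $\mathbf{B}'$ has distinct $i$-th coordinates for each $i$, namely exactly the elements of $B_i$; therefore any $\mathbf{A}' \in \binom{\mathbf{B}'}{\mathbf{A}}$ inherits $a$ distinct $i$-th coordinates, and so must equal $\iota_{\mathbf{A}}(T_1, \ldots, T_n)$ where $T_i \subseteq B_i$ is the $i$-th coordinate set of $\mathbf{A}'$. Since $(T_1,\ldots,T_n) \in \binom{B_1}{a} \times \cdots \times \binom{B_n}{a}$, the value $\chi(\mathbf{A}') = \chi'(T_1,\ldots,T_n)$ is the constant monochromatic colour, independent of $\mathbf{A}'$. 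This establishes that $\mathbf{C}$ witnesses the Ramsey property for the pair $(\mathbf{A}, \mathbf{B})$ and $k$.
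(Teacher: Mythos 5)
Your proof is correct and follows essentially the same route as the paper: the Ramsey object is the same large grid with its lexicographic realizers, and your rank-transfer map $\iota_{\mathbf{A}}$ is precisely the paper's unique \emph{rigid embedding} (Lemma \ref{lem:rigid_emb_unique}) into an $\mathbf{a}^n$-subgrid, giving the same induced colouring and the same monochromaticity argument. The only differences are notational (parameterizing subgrids by tuples of subsets rather than by $\mathbf{l}^n$-subgrids, and proving existence/uniqueness of the rigid embedding inline rather than as a separate lemma).
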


In any $n$-dimensional partial order with realizers $(P,<,<_1,\ldots,<_n)$, the order $<$ is definable in terms of the realizers as their intersection, and since an intersection of linear orders is always a partial order, Theorem \ref{thm:PO_n_RP} is equivalent to saying that the class of finite sets endowed with $n$ linear orders has the Ramsey property. It is in this form that Theorem \ref{thm:PO_n_RP} was previously proved by Soki\'c \cite{MR3047085} and, independently, Bodirsky \cite{MR3210656} (see also \cite{MR3743099}, \cite{MR2948746}, \cite{MR2948747}, and \cite{MR3567532} for related results). Both Soki\'c and Bodirsky derive this result as a special case of general theorems concerning ways of combining Ramsey classes together, and ultimately rely on forms of the Product Ramsey Theorem (Theorem 5 in \S5.1 of \cite{MR3288500}).

Our proof of Theorem \ref{thm:PO_n_RP}, instead, applies the Product Ramsey Theorem directly using a straightforward induced colouring argument. It is based on an argument in a recent paper of Bir\'o and Wan \cite{MR4468883}, where they prove a different Ramsey-theoretic property\footnote{The version of the ``Ramsey property'' stated in \cite{MR4468883} is for colourings of the comparability graph of a partial order, rather than substructures.} for $n$-dimensional partial orders, and from which we borrow the following terminology: For $m,n\in\mathbb{N}^+$, we call the structure $\mathbf{m}^n=(\{1,\ldots,m\}^n,<)$, with the product order, an \emph{$\mathbf{m}^n$-grid}, and for any $a_1,\ldots,a_n\subseteq \{1,\ldots,m\}$, the substructure $(a_1\times\cdots\times a_n,<)$ is a \emph{subgrid} of $\mathbf{m}^n$. If, moreover, $|a_1|=\cdots=|a_n|=l$, we call $(a_1\times\cdots\times a_n,<)$ an \emph{$\mathbf{l}^n$-subgrid} of $\mathbf{m}^n$. We will write $<_1,\ldots,<_n$ for the $n$ lexicographic orders on $\mathbf{m}^n$ which realize the product order. As in \cite{MR4468883}, we can now state the Product Ramsey Theorem in the following convenient way:

\begin{theorem}[Product Ramsey Theorem \cite{MR3288500}]\label{thm:prod_Ramsey}
    For all $k,l,m,n\in\mathbb{N}^+$, there is an $r\in\mathbb{N}^+$ such that for any $k$-colouring of the $\mathbf{l}^n$-subgrids of $\mathbf{r}^n$, there is an $\mathbf{m}^n$-subgrid of $\mathbf{r}^n$ all of whose $\mathbf{l}^n$-subgrids get the same colour.
\end{theorem}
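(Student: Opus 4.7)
The plan is to prove this by induction on the dimension $n$, with the base case $n=1$ being the classical Finite Ramsey Theorem: for all $k,l,m\in\mathbb{N}^+$ there exists $r\in\mathbb{N}^+$ such that every $k$-colouring of the $l$-element subsets of $\{1,\ldots,r\}$ admits a monochromatic $m$-element subset. Since an $\mathbf{l}^1$-subgrid of $\mathbf{r}^1$ is exactly an $l$-subset of $\{1,\ldots,r\}$, this is precisely the desired statement in dimension $1$.

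For the inductive step, I would fix $k,l,m$ and assume the theorem in dimension $n$. First, choose $r_2$ via the inductive hypothesis so that every $k$-colouring of the $\mathbf{l}^n$-subgrids of $\mathbf{r}_2^n$ admits a monochromatic $\mathbf{m}^n$-subgrid. Let $T=\binom{r_2}{l}^n$ denote the (finite) number of $\mathbf{l}^n$-subgrids of $\mathbf{r}_2^n$. Next, apply the classical Ramsey theorem with $k^T$ colours in place of $k$ to obtain $r_1$ such that every $k^T$-colouring of the $l$-subsets of $\{1,\ldots,r_1\}$ admits a monochromatic $m$-subset. Finally, set $r=\max(r_1,r_2)$.

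Given any $k$-colouring $c$ of the $\mathbf{l}^{n+1}$-subgrids of $\mathbf{r}^{n+1}$, I would restrict attention to the subgrid $\{1,\ldots,r_2\}^n\times\{1,\ldots,r_1\}\subseteq\{1,\ldots,r\}^{n+1}$ and define an auxiliary colouring $\tilde c$ of the $l$-subsets of $\{1,\ldots,r_1\}$, with palette the set of $k$-colourings of the $\mathbf{l}^n$-subgrids of $\mathbf{r}_2^n$, by letting $\tilde c(a_{n+1})$ be the function $G\mapsto c(G\times a_{n+1})$. By the choice of $r_1$, there is an $m$-subset $b_{n+1}$ of $\{1,\ldots,r_1\}$ on which $\tilde c$ is constant; equivalently, there is a single $k$-colouring $f$ of the $\mathbf{l}^n$-subgrids of $\mathbf{r}_2^n$ such that $c(G\times a_{n+1})=f(G)$ for every $\mathbf{l}^n$-subgrid $G$ of $\mathbf{r}_2^n$ and every $l$-subset $a_{n+1}\subseteq b_{n+1}$. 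Applying the inductive hypothesis to $f$ then yields an $\mathbf{m}^n$-subgrid $b_1\times\cdots\times b_n$ of $\mathbf{r}_2^n$ on which $f$ is constant, so $b_1\times\cdots\times b_n\times b_{n+1}$ is the desired monochromatic $\mathbf{m}^{n+1}$-subgrid of $\mathbf{r}^{n+1}$.

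The main obstacle is ensuring that the ``freezing'' of colours in the last coordinate is uniform across all possible subgrids in the first $n$ coordinates: a naive attempt to apply Ramsey in each coordinate separately fails because the monochromatic colour might vary with the other coordinates. This is resolved by packaging the single colouring $c$ into the auxiliary colouring $\tilde c$ whose entire palette consists of $k$-colourings on the first-$n$-coordinate side. The price is an exponential inflation of the number of colours from $k$ to $k^T$, but this is still finite, so the classical Ramsey theorem applies, and the dimension-$n$ inductive hypothesis absorbs the resulting colouring $f$ without further difficulty.
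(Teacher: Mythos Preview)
The paper does not prove this theorem; it is quoted from Graham--Rothschild--Spencer \cite{MR3288500} and used as a black box in the proof of Theorem~\ref{thm:PO_n_RP}. Your argument is a correct proof and is essentially the standard inductive proof of the Product Ramsey Theorem: induct on the number of coordinates, encode the colour in the last coordinate as a function-valued colouring on the first $n$ coordinates, apply one-dimensional Ramsey with $k^T$ colours to freeze that function, and then invoke the inductive hypothesis on the resulting $n$-dimensional colouring. The bookkeeping with $r_1$, $r_2$, and $r=\max(r_1,r_2)$ is fine, and the final verification that every $\mathbf{l}^{n+1}$-subgrid of $b_1\times\cdots\times b_{n+1}$ receives the single colour $f(G)=i_0$ is exactly right.
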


In other words, the class of all $n$-dimensional grids has the Ramsey property, with respect to embeddings as subgrids.

Given a finite $n$-dimensional partial order with realizers $\mathbf{P}$ and $m=|P|$, a \emph{rigid embedding} of $\mathbf{P}$ into $\mathbf{m}^n$ is an embedding $f$ of $\mathbf{P}$ into the structure $\mathbf{m}^n_{<_1,\ldots,<_n}=(\{1,\ldots,m\}^n,<,<_1,\ldots,<_n)$ such that no two distinct points in the image of $f$ share a common coordinate, i.e.,~the image of $f$ contains no colinear points. These were called ``casual embeddings'' in \cite{MR4468883}, with the orders $<_1,\ldots,<_n$ omitted.

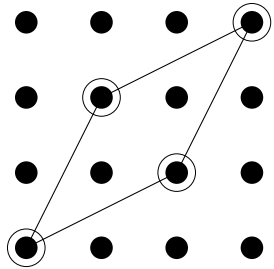
\begin{figure}[ht]
    \centering
    \begin{tikzpicture}
        \draw[white] (0,0) grid (3,3);
        \newcounter{x}
        \newcounter{y}
        \forloop{x}{0}{\value{x}<4}{
        \forloop{y}{0}{\value{y}<4}{
        \fill[black] (\value{x},\value{y}) circle(.15);
        }
        }
        \draw (0,0) -- (1,2) -- (3,3) -- (2,1) -- (0,0);
        \foreach \p in {(0,0),(1,2),(2,1),(3,3)}
        \node[shape=circle , minimum size=.5cm, very thin, draw=black] () at \p {} ;
    \end{tikzpicture}
    \caption{A rigid embedding into a $\mathbf{4}^2$-grid}
\end{figure}

Our terminology reflects the following fact (cf.~Lemma 6.4 of \cite{MR4468883}):

\begin{lemma}\label{lem:rigid_emb_unique}
    Every finite $n$-dimensional partial order with realizers of size $m$ admits a unique rigid embedding into $\mathbf{m}^n$.
\end{lemma}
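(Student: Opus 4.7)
The plan is to produce the embedding canonically from the realizers and then argue that rigidity forces this choice to be the only one.

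For each $i\leq n$, the linear order $(P,<_i)$ has exactly $m$ elements, so there is a unique order-isomorphism $\pi_i\colon(P,<_i)\to(\{1,\ldots,m\},<)$, namely the one which sends the $<_i$-least element to $1$, the next to $2$, and so on. Define $f\colon P\to\{1,\ldots,m\}^n$ by $f(p)=(\pi_1(p),\ldots,\pi_n(p))$. For existence, I would verify that $f$ is a rigid embedding as follows. First, if $p\neq q$ in $P$, then $p<_i q$ or $q<_i p$ for every $i$ (since each $<_i$ is linear), and hence $\pi_i(p)\neq\pi_i(q)$ for every $i$; in particular the image $f(P)$ has no two distinct colinear points. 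Consequently, by the observation made just after the definition of $\mathbf{m}^n_{<_1,\ldots,<_n}$, the lexicographic order $<_i$ on $\mathbf{m}^n$ restricted to $f(P)$ coincides with the $i$th coordinate order, so $p<_i q$ iff $\pi_i(p)<\pi_i(q)$ iff $f(p)<_i f(q)$. Since $<$ is the intersection of the $<_i$ on both sides, $f$ also preserves $<$, and injectivity is automatic from the colinearity condition. Hence $f$ is a rigid embedding of $\mathbf{P}$ into $\mathbf{m}^n$.

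For uniqueness, suppose $g\colon P\to\{1,\ldots,m\}^n$ is any rigid embedding, and let $g_i$ denote its $i$th coordinate projection. Because $g(P)$ has no two colinear points, the restriction of the lexicographic order $<_i$ on $\mathbf{m}^n$ to $g(P)$ is again just the $i$th coordinate order. Since $g$ preserves $<_i$, the map $g_i\colon(P,<_i)\to(\{1,\ldots,m\},<)$ is an injective order-preserving function between finite linear orders of the same size, hence an order-isomorphism. By uniqueness of such an isomorphism, $g_i=\pi_i$ for every $i$, so $g=f$.

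I do not anticipate a real obstacle here; the only subtle point is that both existence and uniqueness hinge on the same fact, namely that rigidity (no colinear image points) causes the lexicographic orders $<_i$ on $\mathbf{m}^n$ to collapse to coordinate orders on the image, so preservation of $<_i$ is equivalent to each coordinate map being an order-isomorphism. Once that is noted, the argument is essentially forced.
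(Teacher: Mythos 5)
Your proof is correct and follows essentially the same route as the paper's: the map $\pi_i$ you construct is exactly the rank function $\mathrm{rk}_i$ used there, and both the existence and uniqueness arguments hinge on the same observation that the absence of colinear points in the image collapses the lexicographic orders on $\mathbf{m}^n$ to coordinate orders. You have simply filled in the details the paper leaves as ``easy to check.''
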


\begin{proof}
    Suppose that $\mathbf{P}=(P,<,<_1,\ldots,<_n)$ is as described and define $f:P\to\{1,\ldots,m\}^n$ by $f(p)=(\mathrm{rk}_1(p),\ldots,\mathrm{rk}_n(p))$ for $p\in P$, where $\mathrm{rk}_i(p)$ is the rank of $p$ with respect to the linear order $<_i$. It is easy to check that $f$ is a rigid embedding.

    For uniqueness, suppose that $g:P\to\{1,\ldots,m\}^n$ is a rigid embedding. Since the image of $g$ contains no colinear points, the lexicographic orders $<_1,\ldots,<_n$ on $\{1,\ldots,m\}^n$ are simply the coordinate orders on the image of $g$. As $g$ must send each of the realizers $<_1,\ldots,<_n$ on $P$ to the coordinate orders on its image, it follows that $g$ must preserve the rank of elements of $p$ with respect to each $<_i$. From this, it follows that $g=f$.
\end{proof}

We can now prove our main theorem:

\begin{proof}[Proof of Theorem \ref{thm:PO_n_RP}]
    Fix $k\in\mathbb{N}$. Suppose that $\mathbf{A}=(A,<,<_1,\ldots,<_n)$ and $\mathbf{B}=(B,<,<_1,\ldots,<_n)$ are in $\mathcal{PO}_{n,<_1,\ldots,<_n}$, and $\mathbf{A}\preceq\mathbf{B}$. Let $l=|A|$ and $m=|B|$, and take $r$ to be as provided by Theorem \ref{thm:prod_Ramsey}. We claim that the structure $\mathbf{C}=\mathbf{r}^n_{<_1,\ldots,<_n}$ witnesses the Ramsey property for $\mathbf{A}$ and $\mathbf{B}$.

    Suppose that $c:\binom{\mathbf{C}}{\mathbf{A}}\to \{1,\ldots,k\}$ is a $k$-colouring. Define a $k$-colouring $\hat{c}$ of the $\mathbf{l}^n$-subgrids of $\mathbf{r}^n$ (without realizers) as follows: If $\mathbf{L}$ is an $\mathbf{l}^n$-subgrid of $\mathbf{r}^n$, let $\hat{c}(\mathbf{L})=c(\mathbf{A_L})$, where $\mathbf{A_L}$ is the unique, by Lemma \ref{lem:rigid_emb_unique}, rigidly embedded copy of $\mathbf{A}$ in $\mathbf{L}$. By our choice of $r$, there is an $\mathbf{m}^n$-subgrid $\mathbf{M}$ of $\mathbf{r}^n$, all of whose $\mathbf{l}^n$-subgrids $\mathbf{L}$ are monochromatic with respect to $\hat{c}$, say $\hat{c}(\mathbf{L})=i$.

    Let $\mathbf{B}_0$ be the rigidly embedded copy of $\mathbf{B}$ in $\mathbf{M}$. Then, if $\mathbf{A}_0$ is a copy of $\mathbf{A}$ in $\mathbf{B}_0$, $\mathbf{A}_0$ will be rigidly embedded into some $\mathbf{l}^n$-subgrid $\mathbf{L}_0$ of $\mathbf{M}$, and so $c(\mathbf{A}_0)=\hat{c}(\mathbf{L}_0)=i$. Thus, $c$ is constant on $\binom{\mathbf{B}_0}{\mathbf{A}}$.
\end{proof} 

Viewing $\mathcal{PO}_{n,<_1,\ldots,<_n}$ as a Fra\"iss\'e order class, Theorems \ref{thm:KPT} and \ref{thm:PO_n_RP} now imply the following, as promised:

\begin{corollary}\label{cor:EA}
    $\mathrm{Aut}(\mathbf{D}_{n,<_1,\ldots,<_n})$ is extremely amenable.\qed
\end{corollary}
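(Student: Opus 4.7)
The plan is to derive this as an immediate consequence of the Kechris--Pestov--Todor\v{c}evi\'{c} correspondence (Theorem \ref{thm:KPT}) applied to the class $\mathcal{PO}_{n,<_1,\ldots,<_n}$. Since all the heavy lifting has already been done, the proof will be essentially one line in length; the strategy is simply to verify that the hypotheses of Theorem \ref{thm:KPT} are met.

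First, I would observe that $\mathcal{PO}_{n,<_1,\ldots,<_n}$ qualifies as a Fra\"iss\'e \emph{order} class. Each structure $(P,<,<_1,\ldots,<_n)$ in the class carries the linear order $<_1$ (indeed, it carries $n$ such linear orders, any one of which suffices), so the class sits within the framework of linearly-ordered structures to which Theorem \ref{thm:KPT} applies. By Theorem \ref{thm:D_n_Fraisse}, it is a Fra\"iss\'e class, with Fra\"iss\'e limit $\mathbf{D}_{n,<_1,\ldots,<_n}$.

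Second, Theorem \ref{thm:PO_n_RP} supplies the Ramsey property for $\mathcal{PO}_{n,<_1,\ldots,<_n}$. Applying Theorem \ref{thm:KPT} with $\mathcal{K}=\mathcal{PO}_{n,<_1,\ldots,<_n}$ and $\mathbf{M}=\mathbf{D}_{n,<_1,\ldots,<_n}$ then yields extreme amenability of $\mathrm{Aut}(\mathbf{D}_{n,<_1,\ldots,<_n})$.

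There is no genuine obstacle at this stage: the substantive content lies entirely in Theorems \ref{thm:D_n_Fraisse} and \ref{thm:PO_n_RP}, and the present corollary is a formal invocation of the KPT machinery. The only minor point worth flagging in the write-up is a remark confirming that the presence of \emph{multiple} linear orders in the signature does not obstruct the application of Theorem \ref{thm:KPT}, since the theorem only requires the existence of a distinguished linear order in each structure in order to rule out the Veech-type obstruction to extreme amenability alluded to in the footnote preceding Theorem \ref{thm:KPT}.
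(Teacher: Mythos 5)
Your proposal is correct and follows exactly the paper's route: the paper likewise views $\mathcal{PO}_{n,<_1,\ldots,<_n}$ as a Fra\"iss\'e order class (any of the realizers serving as the distinguished linear order) and deduces the corollary immediately from Theorems \ref{thm:KPT}, \ref{thm:D_n_Fraisse}, and \ref{thm:PO_n_RP}. Your remark that the extra linear orders in the signature cause no obstruction is a reasonable point of care, and matches the paper's implicit treatment.
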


\section{The universal minimal flow of \texorpdfstring{$\mathrm{Aut}(\mathbf{D}_n)$}{Aut(Dn,<)}}\label{sec:uni_min_flow}

An action of a Polish group $G$ on a compact (Hausdorff) space is \emph{minimal} if it admits no proper closed invariant subsets, or equivalently, every orbit is dense. In particular, if $G$ is extremely amenable, every minimal flow of $G$ is just the trivial action on a point. It is a general fact (see \cite{MR2140630}) that for any $G$, there is a ``largest'' minimal flow, that is, a minimal action of $G$ on a compact space $X$ such that for any other minimal action of $G$ on a compact space $Y$, there is a continuous surjection $\phi:X\to Y$ which is also a \emph{$G$-homomorphism}, meaning:
\[
    \phi(g\cdot x)=g\cdot \phi(x)
\]
for all $g\in G$ and $x\in X$. Moreover, this space $X$, together with the action of $G$, is unique in the sense that for any other compact space $X'$ and minimal action of $G$ on $X'$ with the above property, there is a homeomorphism $X\to X'$ which is also a $G$-homomorphism. The action of $G$ on $X$ is called the \emph{universal minimal flow} of $G$.

In this section, we will determine the universal minimal flow of $\mathrm{Aut}(\mathbf{D}_n)$. First, we note the following:

\begin{proposition}
    $\mathrm{Aut}(\mathbf{D}_n)$ is not extremely amenable.
\end{proposition}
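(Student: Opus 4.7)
The plan is to exhibit a continuous action of $\mathrm{Aut}(\mathbf{D}_n)$ on a non-trivial compact Hausdorff space with no fixed point. A natural candidate is the space $X$ of all $n$-tuples $(L_1,\ldots,L_n)$ of linear orders on $D_n$ satisfying $L_1\cap\cdots\cap L_n\;=\;<$. Viewing each linear order on $D_n$ as an element of $2^{D_n\times D_n}$ with the product topology, $X$ is a closed (hence compact) subspace of $(2^{D_n\times D_n})^n$, and it is non-empty since $(<_1,\ldots,<_n)\in X$. The group $\mathrm{Aut}(\mathbf{D}_n)$, in its pointwise convergence topology, acts coordinatewise on $X$ by pushforward, $g\cdot(L_1,\ldots,L_n)=(g_*L_1,\ldots,g_*L_n)$ where $a\,(g_*L_i)\,b\iff g^{-1}(a)\,L_i\,g^{-1}(b)$, and this action is readily verified to be continuous.

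The next step is to show that this action has no fixed point. A fixed tuple would require each coordinate $L_i$ to be invariant under every automorphism of $\mathbf{D}_n$, so it suffices to verify that no linear order $L$ realizing $<$ on $D_n$ can be preserved by all of $\mathrm{Aut}(\mathbf{D}_n)$. Given any such $L$, since $<$ is not linear (as $n\geq 2$), there exist incomparable $\mathbf{a},\mathbf{b}\in D_n$; relabelling if necessary, we may assume $\mathbf{a}<_L\mathbf{b}$. Now $\{\mathbf{a},\mathbf{b}\}$ forms a two-element antichain in $\mathbf{D}_n$, and by the $2$-homogeneity of $\mathbf{D}_n$ (Corollary \ref{cor:D_n_2-hom}), the swap $\mathbf{a}\leftrightarrow\mathbf{b}$ extends to an automorphism $g$ of $\mathbf{D}_n$. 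If $g$ preserved $L$, then $\mathbf{a}<_L\mathbf{b}$ would give $\mathbf{b}=g(\mathbf{a})<_L g(\mathbf{b})=\mathbf{a}$, a contradiction.

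Combining these steps, $\mathrm{Aut}(\mathbf{D}_n)$ acts continuously on the non-empty compact space $X$ with no fixed point, and hence cannot be extremely amenable. No substantial obstacle is anticipated: compactness of $X$ and continuity of the action are standard verifications, and the decisive input---the existence of automorphisms swapping arbitrary pairs of incomparable points---is already available as Corollary \ref{cor:D_n_2-hom}. The same argument in fact shows that the subspace $\{(<_{\pi(1)},\ldots,<_{\pi(n)}):\pi\in S_n\}\subseteq X$, which will figure in the identification of the universal minimal flow in Theorem \ref{thm:UMF}, already carries a non-trivial $\mathrm{Aut}(\mathbf{D}_n)$-action without fixed points.
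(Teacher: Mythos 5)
Your proof is correct and follows essentially the same route as the paper's: a compact space of linear orders on $D_n$ (the paper uses single linear extensions of $<$, you use $n$-tuples of realizers, but your fixed-point argument reduces to the single-order case anyway), the logic action, and the decisive use of Corollary \ref{cor:D_n_2-hom} to swap an incomparable pair and rule out any invariant linear order. The only difference is the cosmetic choice of compact space, which does not change the argument.
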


\begin{proof}
    Let $X$ be the set of all linear orders on $D_n$ which extend $<$. We can view $X$ as a closed, and thus compact, subset of the product $\{0,1\}^{D_n^2}$. Then, $\mathrm{Aut}(\mathbf{D}_n)$ acts continuously on $X$ via the \emph{logic action}, $g\,\cdot\!\prec\;=\;\prec^g$, where:
    \[
        \mathbf{a}\prec^g\mathbf{b} \iff g^{-1}(\mathbf{a})\prec g^{-1}(\mathbf{b}),
    \]
    for all $\mathbf{a},\mathbf{b}\in D_n$, $\prec\;\in X$, and $g\in\mathrm{Aut}(\mathbf{D}_n)$. 
    
    Take $\prec\;\in X$ and consider $\mathbf{a},\mathbf{b}\in D_n$ which are incomparable with respect to $<$, but such that $\mathbf{a}\prec\mathbf{b}$. Since the map which exchanges $\mathbf{a}$ and $\mathbf{b}$ is an automorphism of $(\{\mathbf{a},\mathbf{b}\},<)$, by Corollary \ref{cor:D_n_2-hom} there is an $h\in\mathrm{Aut}(\mathbf{D}_n)$ such that $h$ exchanges $\mathbf{a}$ and $\mathbf{b}$. But then, $\mathbf{b}\prec^{h^{-1}}\mathbf{a}$, so $\prec^{h^{-1}}\;\neq\;\prec$. Thus, the action of $\mathrm{Aut}(\mathbf{D}_n)$ on $X$ has no fixed points.
\end{proof}

The main tool for determining the universal minimal flow of automorphism groups of reducts of linearly ordered structures is Theorem 7.5 in \cite{MR2140630}. However, we cannot directly apply this result in the case of $\mathbf{D}_n=(D_n,<)$, viewed as a reduct of $\mathbf{D}_{n,<_1,\ldots,<_n}=(D_n,<,<_1,\ldots,<_n)$, because the latter is an expansion by more than just a single linear order. Even if we could adapt the aforementioned theorem to this setting, any generalization of the ``reasonable'' condition in its hypotheses would likely fail in our case, as $\mathbf{D}_n$ is not a Fra\"iss\'e limit (cf.~Proposition 5.2 in \cite{MR2140630}).

Instead, we will show directly that the universal minimal flow of $\mathrm{Aut}(\mathbf{D}_n)$ is its extended logic action on the set of all realizers for $\mathbf{D}_n$, considered as $n$-tuples of linear orders on $D_n$. The key observation is that this set consists merely of permutations of the coordinate orders on $D_n$. A weak version of this fact for products of finite linear orders was proved on p.~797 of \cite{MR0641492}. As usual, we denote by $S_n$ the group of permutations of $\{1,\ldots,n\}$.

\begin{lemma}[\cite{MR0641492}]\label{lem:permute_realizers_finite}
    Suppose that $(F_1,<_1),\ldots, (F_n,<_n)$ are finite linear orderings with at least two elements and $(F_1\times\cdots\times F_n,<)$ their direct product. If $\prec_1,\ldots,\prec_n$ are linear orderings on $F_1\times\cdots\times F_n$ which realize $<$, then there is a permutation $\sigma\in S_n$ such that
    \[
        a_i<_ib_i \implies \mathbf{a}\prec_{\sigma(i)}\mathbf{b},
    \]
    for all $i\leq n$ and $\mathbf{a},\mathbf{b}\in F_1\times\cdots\times F_n$.\qed
\end{lemma}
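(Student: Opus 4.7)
The plan is to show that for each $j$ there is a unique $i$ such that $\prec_j$ is \emph{$i$-dominant}, meaning $a_i<_ib_i \implies \mathbf{a}\prec_j\mathbf{b}$ for all $\mathbf{a},\mathbf{b}\in F_1\times\cdots\times F_n$, and then define $\sigma$ by $\sigma(i)=j$ for the unique such pair.

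I would first prove the $n=2$ case directly. On any $2\times 2$ subgrid $\{a,a'\}\times\{b,b'\}$ with $a<_1a'$ and $b<_2b'$, the unique $<$-incomparable pair is $\{(a,b'),(a',b)\}$; each realizer orders it, and since $\prec_1\cap\prec_2\;=\;<$, the two realizers must order it oppositely. Call $\prec_j$ \emph{$1$-dominant on the subgrid} if $(a,b')\prec_j(a',b)$, and \emph{$2$-dominant} otherwise. So on each such subgrid exactly one of $\prec_1,\prec_2$ is $1$-dominant and the other $2$-dominant.

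The crux is showing that a given $\prec_j$ makes the same choice on every $2\times 2$ subgrid. Supposing for contradiction that $\prec_1$ is $1$-dominant on one subgrid and $2$-dominant on another, I would construct a ``mixed'' $2\times 2$ subgrid using elements drawn from both, and exploit the product order together with transitivity of $\prec_1$ and $\prec_2$ to pin down the ordering of the mixed subgrid's incomparable pair in both realizers; the goal is to show both realizers then order this pair in the same direction, contradicting $\prec_1\cap\prec_2\;=\;<$. Once constant dominance is established, global $i$-dominance follows from the local version: for any $\mathbf{a},\mathbf{b}$ with $a_i<_ib_i$, either $\mathbf{a}<\mathbf{b}$ (so $\mathbf{a}\prec_j\mathbf{b}$ since $\prec_j$ extends $<$), or $\mathbf{a},\mathbf{b}$ are incomparable and embed into the $2\times 2$ subgrid $\{a_1,b_1\}\times\{b_2,a_2\}$ on which local $i$-dominance applies.

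For $n>2$ I would reduce to the $n=2$ case by $2$-dimensional slicing: for distinct $i,k$ and any fixed values of the remaining coordinates, the slice $S$ is isomorphic to $F_i\times F_k$, and since intersections restrict, the $\prec_l|_S$ are linear extensions whose intersection is the product order on $S$. A direct adaptation of the $n=2$ consistency argument, applied to pairs of restrictions that order a critical pair oppositely, shows each $\prec_j|_S$ is either $i$- or $k$-dominant on $S$; varying $(i,k)$ and the slice forces each $\prec_j$ to be globally $i$-dominant for a fixed $i=:\sigma^{-1}(j)$. The resulting map $\sigma^{-1}$ is injective, hence a permutation: if two distinct $\prec_j,\prec_{j'}$ were both $i$-dominant, every $<$-incomparable pair $(\mathbf{a},\mathbf{b})$ with $a_i<_ib_i$ would be ordered identically by both, so $\prec_j\cap\prec_{j'}$ would strictly contain $<$, contradicting $\bigcap_l\prec_l\;=\;<$. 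The main obstacle is the consistency step in the $n=2$ case --- choosing the right auxiliary ``mixed'' subgrid and verifying that transitivity in both $\prec_1$ and $\prec_2$ forces a common ordering of an incomparable pair.
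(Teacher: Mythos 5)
First, a point of reference: the paper does not prove this lemma at all --- it is stated with a citation to Manaster and Remmel \cite{MR0641492} and no argument --- so there is no in-paper proof to compare yours against. Judged on its own, your outline has a sensible architecture (opposite ordering of each incomparable pair by the two realizers, local dominance on $2\times 2$ subgrids, reduction of global to local dominance, injectivity of the resulting assignment), but the step you yourself call ``the crux'' --- that a fixed $\prec_j$ makes the same choice on every $2\times 2$ subgrid --- is only announced as a goal, and it is where essentially all the content of the lemma lives. The difficulty is real: transitivity on the given points alone cannot do it. For instance, on $\{1,2,3\}\times\{1,2\}$ the linear extension $(1,1)\prec(1,2)\prec(2,1)\prec(3,1)\prec(2,2)\prec(3,2)$ is $1$-dominant on the subgrid $\{1,2\}\times\{1,2\}$ but $2$-dominant on $\{2,3\}\times\{1,2\}$; what kills such behaviour is that no second realizer can reverse it on all incomparable pairs simultaneously, and exhibiting this obstruction requires constructing an auxiliary point whose choice depends on the relative position of the two subgrids. (For a descending antichain $\mathbf{x},\mathbf{y},\mathbf{z}$ with $\mathbf{x}\prec_1\mathbf{y}$ and $\mathbf{z}\prec_1\mathbf{x}$, the point $(z_1,y_2)$ works: it lies $<$-above both $\mathbf{y}$ and $\mathbf{z}$ and is incomparable with $\mathbf{x}$, yet is forced above $\mathbf{x}$ in both $\prec_1$ and $\prec_2$, contradicting that the realizers intersect to $<$.) You give neither the construction for two incomparable pairs in general position nor a chaining argument reducing to adjacent ones, so the proof is not complete.

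Two further problems. In the reduction for $n>2$, a $2$-dimensional slice carries all $n$ restricted realizers, not two; the fact that their common intersection is the product order on the slice no longer forces each individual restriction to be $i$- or $k$-dominant there, because the realizer witnessing the reversal of a given incomparable pair may vary from pair to pair. So ``a direct adaptation of the $n=2$ consistency argument'' is not available, and the gluing of dominance types across different slices and different coordinate pairs $(i,k)$ is also asserted without argument. Finally, your injectivity step is a non sequitur as written: $\prec_j\cap\prec_{j'}$ strictly containing $<$ does not contradict $\bigcap_l\prec_l\;=\;<$ (indeed, when $n>2$ any two of the realizers must intersect to something strictly larger than $<$). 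It is repairable by pigeonhole: if the assignment $j\mapsto i$ were not a bijection, some coordinate $i_0$ would be dominant for no realizer; choosing $a_l<_lb_l$ in each $F_l$ and forming the incomparable pair $\mathbf{a},\mathbf{b}$ with $a_{i_0}<_{i_0}b_{i_0}$ and $b_l<_la_l$ for $l\neq i_0$, every realizer would place $\mathbf{b}$ below $\mathbf{a}$, forcing $\mathbf{b}<\mathbf{a}$, a contradiction.
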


This property extends to products of arbitrary linear orders, and in particular, to $(\mathbb{Q}^n,<)$.

\begin{lemma}\label{lem:permute_realizers}
    Suppose that $(L_1,<_1),\ldots, (L_n,<_n)$ are linear orderings with at least two elements and $(L_1\times\cdots\times L_n,<)$ their direct product. If $\prec_1,\ldots,\prec_n$ are linear orderings on $L_1\times\cdots\times L_n$ which realize $<$, then there is a permutation $\sigma\in S_n$ such that
    \[
        a_i<_ib_i \implies \mathbf{a}\prec_{\sigma(i)}\mathbf{b},
    \]
    for all $i\leq n$ and $\mathbf{a},\mathbf{b}\in L_1\times\cdots\times L_n$.
\end{lemma}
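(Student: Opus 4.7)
The plan is to reduce to Lemma~\ref{lem:permute_realizers_finite} by localizing to finite subproducts. Since $|L_i|\geq 2$ for each $i$, I would fix elements $x_i,y_i\in L_i$ with $x_i<_iy_i$ once and for all. The restrictions of $\prec_1,\ldots,\prec_n$ to any subset of $L_1\times\cdots\times L_n$ remain linear orders and continue to realize the restricted product order, so Lemma~\ref{lem:permute_realizers_finite} applied to the $2^n$-cube
\[
    C=\{x_1,y_1\}\times\cdots\times\{x_n,y_n\}
\]
yields a permutation $\sigma\in S_n$, which will serve as my candidate for the full product.

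The crux of the argument is that $\sigma$ is uniquely characterized by the behaviour of $\prec_1,\ldots,\prec_n$ on a particular family of antipodal pairs in $C$. For each $i\leq n$, set
\[
    \mathbf{a}^{(i)}=(y_1,\ldots,y_{i-1},x_i,y_{i+1},\ldots,y_n),\qquad \mathbf{b}^{(i)}=(x_1,\ldots,x_{i-1},y_i,x_{i+1},\ldots,x_n).
\]
These are incomparable in the product order. Applying the conclusion of Lemma~\ref{lem:permute_realizers_finite} to $\mathbf{a}^{(i)}$ and $\mathbf{b}^{(i)}$ coordinate by coordinate shows that $\mathbf{a}^{(i)}\prec_k\mathbf{b}^{(i)}$ holds precisely when $k=\sigma(i)$: the $i$th coordinate forces $\mathbf{a}^{(i)}\prec_{\sigma(i)}\mathbf{b}^{(i)}$, while each $j\neq i$ forces $\mathbf{b}^{(i)}\prec_{\sigma(j)}\mathbf{a}^{(i)}$. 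Hence $\sigma(i)$ is identified as the unique index $k$ with $\mathbf{a}^{(i)}\prec_k\mathbf{b}^{(i)}$, a quantity depending only on the restrictions of the $\prec_k$ to $C$.

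With uniqueness in hand, the passage to the full product is immediate. Given $\mathbf{a},\mathbf{b}\in L_1\times\cdots\times L_n$ and $i\leq n$ with $a_i<_ib_i$, set $F_j=\{x_j,y_j,a_j,b_j\}\subseteq L_j$ for each $j$ and apply Lemma~\ref{lem:permute_realizers_finite} to $F_1\times\cdots\times F_n$ to obtain a permutation $\tau\in S_n$. Since $C\subseteq F_1\times\cdots\times F_n$ and $\tau$ must satisfy the same characterization on $C$, we conclude $\tau=\sigma$; as $\mathbf{a},\mathbf{b}$ both lie in $F_1\times\cdots\times F_n$, this gives $\mathbf{a}\prec_{\sigma(i)}\mathbf{b}$, as required.

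The only non-routine step is the uniqueness characterization of $\sigma$; once it is pinned down by a single antipodal pair in each coordinate direction, the finite-to-infinite bootstrap is a direct consequence of Lemma~\ref{lem:permute_realizers_finite} and requires no further combinatorics.
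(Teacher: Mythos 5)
Your proof is correct, and while it reduces to Lemma \ref{lem:permute_realizers_finite} via finite subproducts just as the paper does, the logical shape is genuinely different. The paper argues by contradiction: it enumerates all $n!$ permutations, assumes each fails via some witness pair $(\mathbf{a}^j,\mathbf{b}^j)$, collects the finitely many coordinates of all witnesses into a single finite subproduct $F_1\times\cdots\times F_n$, and observes that the restricted orders then admit no permutation at all, contradicting Lemma \ref{lem:permute_realizers_finite}. You instead give a direct argument: you fix a reference cube $C$, extract a candidate $\sigma$ from it, and prove the additional fact that the permutation supplied by Lemma \ref{lem:permute_realizers_finite} is \emph{uniquely determined} by the behaviour of $\prec_1,\ldots,\prec_n$ on the antipodal pairs $(\mathbf{a}^{(i)},\mathbf{b}^{(i)})$ --- the reversed inequalities in the coordinates $j\neq i$ force $\mathbf{b}^{(i)}\prec_{\sigma(j)}\mathbf{a}^{(i)}$, so by antisymmetry $\sigma(i)$ is the only index $k$ with $\mathbf{a}^{(i)}\prec_k\mathbf{b}^{(i)}$. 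This coherence lets you match the local permutation $\tau$ of each ad hoc finite product $F_1\times\cdots\times F_n\supseteq C$ with the global $\sigma$. Your route costs a little extra bookkeeping but buys a constructive identification of $\sigma$ and, as a byproduct, the (correct, and not stated in the paper) observation that the permutation in Lemma \ref{lem:permute_realizers_finite} is unique. Both arguments are sound and of comparable length.
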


\begin{proof}
    Enumerate $S_n=\{\sigma_1,\ldots,\sigma_{n!}\}$ and suppose the result fails. Then, for each $j\leq n!$, there are $\mathbf{a}^j=(a_1^j,\ldots,a_n^j)$ and $\mathbf{b}^j=(b_1^j,\ldots,b_n^j)$ in $L_1\times\cdots\times L_n$, and an $i_j\leq n$, such that 
    \[
        a_{i_j}^j<_{i_j} b_{i_j}^j \quad\text{but}\quad \mathbf{a}^j\not\prec_{\sigma(i_j)}\mathbf{b}^j.
    \]
    For each $i\leq n$, let $F_i\subseteq L_i$ be the projections of $\{\mathbf{a}^j,\mathbf{b}^j:j\leq n!\}$ onto the $i$th coordinate, possibly with one additional point from $L_i$ to ensure $|F_i|\geq 2$. Then, letting $<_i$ be the order on $F_i$ inherited from $L_i$, $(F_1,<_1),\ldots,(F_n,<_n)$ contradict Lemma \ref{lem:permute_realizers_finite}.
\end{proof}

To apply the preceding lemma to $\mathbf{D}_n$, we must be able to extend realizers for $\mathbf{D}_n$ to realizers for $(\mathbb{Q}^n,<)$.

\begin{lemma}\label{lem:extend_realizers}
     If $\prec_1,\ldots,\prec_n$ are linear orderings on $D_n$ which realize the product order $<$, then there are linear extensions $\prec_1^*,\ldots,\prec_n^*$ of $\prec_1,\ldots,\prec_n$, respectively, to $\mathbb{Q}^n$ which realize the product order there.
\end{lemma}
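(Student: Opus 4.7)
The plan is to reduce the extension problem to the following key characterization: any realizer $\prec_1,\ldots,\prec_n$ of $<$ on $D_n$ must coincide, up to some permutation $\sigma\in S_n$, with the coordinate orders $<_1,\ldots,<_n$ on $D_n$, i.e.~$\prec_k=<_{\sigma(k)}$ on $D_n$. Once this characterization is in hand, the extension is immediate: setting $\prec_k^*:=<_{\mathrm{lex}_{\sigma(k)}}$ on $\mathbb{Q}^n$ yields a linear order whose restriction to $D_n$ equals $<_{\sigma(k)}|_{D_n}=\prec_k$ (by the already-noted fact that $<_{\mathrm{lex}_j}|_{D_n}=<_j$ because no two distinct points of $D_n$ are colinear), and the intersection $\bigcap_k\prec_k^*$ equals $\bigcap_j<_{\mathrm{lex}_j}=<$ on $\mathbb{Q}^n$ since $\sigma$ is a bijection.

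To establish the characterization, I would apply Lemma~\ref{lem:permute_realizers_finite} to finite subsets of $D_n$. Given a finite $F\subseteq D_n$ of size $m$, the coordinate projections $F_i:=\pi_i(F)$ each have $m$ distinct elements (no-colinearity), and $F$ embeds rigidly into the grid $F_1\times\cdots\times F_n$ by Lemma~\ref{lem:rigid_emb_unique}. After extending the realizers $\prec_k|_F$ to realizers of the product order on the full grid, Lemma~\ref{lem:permute_realizers_finite} yields a permutation $\sigma_F\in S_n$ with $a_i<b_i\implies\mathbf{a}\prec_{\sigma_F(i)}\mathbf{b}$ on $F$. Once $F$ is large enough that the coordinate orders $<_1|_F,\ldots,<_n|_F$ are pairwise distinct as linear orders — which holds as soon as $F$ contains, for each pair $i\neq j$, two points on which $<_i$ and $<_j$ disagree, and such pairs are abundant in $D_n$ by density — the permutation $\sigma_F$ is uniquely determined and stable under enlarging $F$. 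Passing to an exhausting chain $F_1\subseteq F_2\subseteq\cdots$ with $\bigcup_k F_k=D_n$, one gets a global $\sigma$ with $\prec_k|_{F_\ell}$ extending $<_{\sigma(k)}|_{F_\ell}$ for all $\ell$; since each $\prec_k|_{F_\ell}$ and $<_{\sigma(k)}|_{F_\ell}$ are both linear orders on $F_\ell$ and one contains the other, they coincide, and the desired equality $\prec_k=<_{\sigma(k)}$ on $D_n$ follows.

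The main obstacle I anticipate is the extension step within the proof of the characterization, where realizers on the rigidly embedded subset $F$ of the grid $F_1\times\cdots\times F_n$ must be extended to realizers on the full grid in order to invoke Lemma~\ref{lem:permute_realizers_finite}. The grid contains many colinear points absent from $F$, and one must simultaneously extend all $n$ linear orders while preserving both linearity and collective realization of the product order. I would expect to handle this inductively, adding the missing grid points one at a time and verifying at each step that the transitive closure of the partial order $\prec_k\cup<$ on the enlarged set remains antisymmetric (so that a Szpilrajn-type extension is available), with density of $D_n$ used to ensure that compatible placements exist in all $n$ orders simultaneously.
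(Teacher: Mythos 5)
Your overall strategy inverts the paper's logical order: you propose to first prove that every realizer of $<$ on $D_n$ is a coordinate permutation of $<_1,\ldots,<_n$ (this is precisely Lemma~\ref{lem:permute_realizers_D_n}, which the paper \emph{derives from} the present lemma together with Lemma~\ref{lem:permute_realizers}), and then read off the extension by setting $\prec_k^*=\;<_{\mathrm{lex}_{\sigma(k)}}$. That last deduction is correct and easy. The problem is the step you yourself flag as the ``main obstacle'': extending the restricted realizers $\prec_k|_F$ on a colinear-free finite set $F$ to realizers of the product order on the grid $F_1\times\cdots\times F_n$. This step is \emph{false} for arbitrary realizers of $<|_F$. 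Take $n=2$ and $F=\{\mathbf{a},\mathbf{b},\mathbf{c}\}\subseteq D_2$ an antichain with $a_1<b_1<c_1$ and $c_2<b_2<a_2$, and consider the linear orders $\mathbf{b}\prec_1\mathbf{a}\prec_1\mathbf{c}$ and $\mathbf{c}\prec_2\mathbf{a}\prec_2\mathbf{b}$. Their intersection is empty, so they realize $<|_F$; but by Lemma~\ref{lem:permute_realizers_finite} any realizer of the product order on the $\mathbf{3}^2$-grid must restrict, on the colinear-free set $F$, to the two coordinate orders (up to swap), and neither $\prec_1|_F$ nor $\prec_2|_F$ is a coordinate order on $F$. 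Hence no extension to the grid exists. Of course such a pair cannot arise as the restriction of a \emph{global} realizer on $D_n$ --- but that is exactly the characterization you are trying to prove, so ruling it out at the finite level either begs the question or forces you to import the global density structure, which is where all the actual work lies. Your sketched remedy (add grid points one at a time, take transitive closures, invoke density) does not obviously help: the finite set $F$ is not dense in its grid, so the mechanism that makes such an argument work is absent, and you give no argument that the $n$ transitive closures can be simultaneously linearized so that their intersection is exactly the product order on the grid.

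For comparison, the paper avoids finite grids entirely and works directly in $\mathbb{Q}^n$: for each $i$ it forms the transitive closure of $\prec_i\cup<$ on $\mathbb{Q}^n$, shows by induction on cycle length that this is acyclic (using that $\prec_i$ extends $<$ on $D_n$), applies Szpilrajn to get $\prec_i^*$, and then uses the topological density of $D_n$ in $\mathbb{Q}^n$ to show $\bigcap_i\prec_i^*\;=\;<$: if $\mathbf{a},\mathbf{b}\in\mathbb{Q}^n$ were incomparable with $\mathbf{a}<^*\mathbf{b}$, one finds incomparable $\mathbf{a}',\mathbf{b}'\in D_n$ with $\mathbf{a}'<\mathbf{a}$ and $\mathbf{b}<\mathbf{b}'$, whence $\mathbf{a}'<^*\mathbf{b}'$ and so $\mathbf{a}'<\mathbf{b}'$, a contradiction. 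If you want to pursue your route, you would need an independent proof of the permutation characterization on $D_n$ that genuinely exploits density --- at which point you are essentially reproving Lemmas~\ref{lem:extend_realizers} and~\ref{lem:permute_realizers} in disguise. As it stands, the proposal has a genuine gap at its central step.
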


\begin{proof}
    For each $i\leq n$, let $\prec_i'$ be the union of $\prec_i$ on $D_n$ and the product order $<$ on $\mathbb{Q}^n$, and $\prec_i''$ its transitive closure. We claim that $\prec_i''$ is irreflexive, and thus a partial order on $\mathbb{Q}^n$. To see this, we will show by induction that there are no $\prec_i'$-cycles of length $m\geq 1$:
    \[
        \mathbf{a}^1\prec_i'\mathbf{a}^2\prec_i'\cdots\prec_i'\mathbf{a}^m\prec_i'\mathbf{a}^1
    \]
    in $\mathbb{Q}^n$. When $m=1$, this is immediate from the fact that $\prec_i$ and $<$ are irreflexive. When $m=2$, there are four possible forms of $\prec_i'$-cycles:
    \begin{align*}
        &\mathbf{a}^1<\mathbf{a}^2<\mathbf{a}^1\\
        &\mathbf{a}^1<\mathbf{a}^2\prec_i\mathbf{a}^1\\
        &\mathbf{a}^1\prec_i\mathbf{a}^2<\mathbf{a}^1\\
        &\mathbf{a}^1\prec_i\mathbf{a}^2\prec_i\mathbf{a}^1.
    \end{align*}
    The first case implies $\mathbf{a}^1<\mathbf{a}^1$, contrary to $<$ being irreflexive. In the remaining three cases, both $\mathbf{a}^1$ and $\mathbf{a}^2$ must lie in $D_n$, and $\prec_i$ extends $<$ there, so we get $\mathbf{a}^1\prec_i\mathbf{a}^1$, contrary to $\prec_i$ being irreflexive.

    Assume the claim holds for $k\geq 1$ and that we are given a $\prec_i'$-cycle of length $k+1$:
    \[
        \mathbf{a}^1\prec_i'\mathbf{a}^2\prec_i'\cdots\prec_i'\mathbf{a}^k\prec_i'\mathbf{a}^{k+1}\prec_i'\mathbf{a}^1.
    \]
    If anywhere in this cycle, two successive instances of $\prec_i'$ are given by the same relation, either both $<$ or both $\prec_i$, then transitivity of that relation allows us to reduce to a cycle of length $k$, to which we can apply the induction hypothesis and obtain a contradiction. Thus, we may assume the relations $<$ and $\prec_i$ alternate in this cycle. If $k+1$ is odd, then the first and last relation are the same, but then the cycle we get by starting at $\mathbf{a}^{k+1}$:
    \[
        \mathbf{a}^{k+1}\prec_i'\mathbf{a}^1\prec_i'\mathbf{a}^2\cdots\prec_i'\mathbf{a}^k\prec_i'\mathbf{a}^{k+1}
    \]
    begins with two successive instances of the same relation, as in the prior case. Thus, the only remaining possibilities are that $k+1$ is even and either:
    \[
        \mathbf{a}^1<\mathbf{a}^2\prec_i\cdots\prec_i\mathbf{a}^k<\mathbf{a}^{k+1}\prec_i\mathbf{a}^1
    \]
    or
    \[
        \mathbf{a}^1\prec_i\mathbf{a}^2<\cdots<\mathbf{a}^k\prec_i\mathbf{a}^{k+1}<\mathbf{a}^1.
    \]
    In the former case, it must be that $\mathbf{a}^k$, $\mathbf{a}^{k+1}$, and $\mathbf{a}^1$ are all in $D_n$, and since $\prec_i$ extends $<$ on $D_n$, we have $\mathbf{a}^k\prec_i\mathbf{a}^{k+1}\prec_i\mathbf{a}^1$. Then, we can reduce the cycle by replacing the last two inequalities with $\mathbf{a}^k\prec_i\mathbf{a}^1$ and apply the induction hypothesis. In the other case, we can start the cycle at $\mathbf{a}^2$ instead and argue similarly.

    Thus, for each $i\leq n$, $\prec_i''$ is a partial order on $\mathbb{Q}^n$ extending both $\prec_i$ on $D_n$ and $<$ on $\mathbb{Q}^n$. Apply Lemma \ref{lem:szpilrajn} to obtain a linear order $\prec_i^*$ on $\mathbb{Q}^n$ which extends $\prec_i''$, and let $<^*\;=\;\prec_1^*\cap\cdots\cap\prec_n^*$. Clearly, $<\;\subseteq\;<^*$, and $<^*$ agrees with $<$ on $D_n$. We claim that $<\;=\;<^*$ everywhere.

     Suppose that $<^*\;\not\subseteq\;<$. Then, there are $\mathbf{a},\mathbf{b}\in\mathbb{Q}^n$ such that $\mathbf{a}<^*\mathbf{b}$, but $\mathbf{a}\not<\mathbf{b}$. Since $<\;\subseteq\;<^*$, we cannot have $\mathbf{b}<\mathbf{a}$, so $\mathbf{a}$ and $\mathbf{b}$ are incomparable with respect to the product order. That is, there are $i\neq j\leq n$ such that $a_i<b_i$, but $b_j<a_j$.

     Choose $x,y\in\mathbb{Q}$ such that $a_i<x<b_i$ and $b_j<y<a_j$. Next, choose $\mathbf{a}'=(a_1',\ldots,a_n')$ and $\mathbf{b}'=(b_1',\ldots,b_n')$ in $D_n$ such that:
     \begin{align*}
         \mathbf{a}'&<\mathbf{a} & \mathbf{b}&<\mathbf{b}'\\
         y&<a_j' & b_j'&<y,
     \end{align*}
     which is possible by the density of $D_n$ in $\mathbb{Q}^n$. Note that $\mathbf{a}'$ and $\mathbf{b}'$ are incomparable with respect to the product order. Then,
     \[
         \mathbf{a}'<\mathbf{a}<^*\mathbf{b}<\mathbf{b}',
     \]
     so by transitivity of $<^*$ and the fact that it extends $<$, we have that $\mathbf{a}'<^*\mathbf{b}'$. But, $\mathbf{a}'$ and $\mathbf{b}'$ are both in $D_n$ and $<^*$ agrees with $<$ on $D_n$, so $\mathbf{a}'<\mathbf{b}'$, contradicting that they were incomparable. Hence, $<\;=\;<^*$.
\end{proof}

Applying Lemmas \ref{lem:extend_realizers} and \ref{lem:permute_realizers}, we have the following, where we gain the reverse implication $\Leftarrow$ from the fact that $D_n$ contains no colinear points:

\begin{lemma}\label{lem:permute_realizers_D_n}
    If $\prec_1,\ldots,\prec_n$ are linear orderings on $D_n$ which realize the product order $<$, then there is a permutation $\sigma\in S_n$ such that
    \[
        a_i<b_i \iff \mathbf{a}\prec_{\sigma(i)}\mathbf{b},
    \]
    for all $i\leq n$ and $\mathbf{a},\mathbf{b}\in D_n$.\qed
\end{lemma}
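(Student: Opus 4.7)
The plan is to chain Lemma \ref{lem:extend_realizers} and Lemma \ref{lem:permute_realizers}, then exploit the no-colinearity property of $D_n$ to promote the forward implication to a biconditional.

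First I would apply Lemma \ref{lem:extend_realizers} to the given realizers $\prec_1,\ldots,\prec_n$ of $<$ on $D_n$, producing linear orders $\prec_1^*,\ldots,\prec_n^*$ on $\mathbb{Q}^n$ which extend the $\prec_i$ and which realize the product order on $\mathbb{Q}^n$. Since $(\mathbb{Q}^n,<)$ is a direct product of $n$ copies of $(\mathbb{Q},<)$ (each having at least two elements), Lemma \ref{lem:permute_realizers} applies to $\prec_1^*,\ldots,\prec_n^*$ and produces a permutation $\sigma\in S_n$ such that
\[
    a_i<b_i \implies \mathbf{a}\prec_{\sigma(i)}^*\mathbf{b}
\]
for all $\mathbf{a},\mathbf{b}\in\mathbb{Q}^n$ and $i\leq n$. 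Restricting to $\mathbf{a},\mathbf{b}\in D_n$ and using that $\prec_{\sigma(i)}^*$ extends $\prec_{\sigma(i)}$ gives the forward direction of the desired biconditional.

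For the converse, suppose $\mathbf{a},\mathbf{b}\in D_n$ satisfy $\mathbf{a}\prec_{\sigma(i)}\mathbf{b}$. Then $\mathbf{a}\neq\mathbf{b}$, and since no two distinct points of $D_n$ share a coordinate, $a_i\neq b_i$. If we had $b_i<a_i$, the forward direction applied to $(\mathbf{b},\mathbf{a})$ would give $\mathbf{b}\prec_{\sigma(i)}\mathbf{a}$, contradicting that $\prec_{\sigma(i)}$ is a linear (hence asymmetric) order. Therefore $a_i<b_i$, completing the biconditional.

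I do not anticipate a main obstacle here, as the heavy lifting was done in the preceding lemmas: Lemma \ref{lem:extend_realizers} handled the delicate task of extending the $\prec_i$ compatibly with the ambient product order on $\mathbb{Q}^n$, and Lemma \ref{lem:permute_realizers} supplied the permutation on the finite-rectangle reduction. The only subtle point is the use of no-colinearity to get the reverse implication, which the statement already flags.
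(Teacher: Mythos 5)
Your proposal is correct and follows exactly the route the paper intends: the lemma is stated with a \qed precisely because it is the immediate chaining of Lemma \ref{lem:extend_realizers} (to lift the realizers to $\mathbb{Q}^n$) with Lemma \ref{lem:permute_realizers} (to obtain $\sigma$ and the forward implication), with the reverse implication recovered from the absence of colinear points in $D_n$ together with asymmetry of the linear orders. Your handling of the converse is exactly the argument the paper leaves implicit.
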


Let $\mathcal{R}_n$ be the set of all realizers on $\mathbf{D}_n$, viewed as $n$-tuples $(\prec_1,\ldots,\prec_n)$ of linear orders on $D_n$ which realize the product order $<$. By Lemma \ref{lem:permute_realizers_D_n}, $\mathcal{R}_n$ is exactly the set of permutations of the $n$ coordinate orders on $D_n$. In particular, $|\mathcal{R}_n|=n!$.

\begin{lemma}\label{lem:logic_action_transitive}
    $\mathrm{Aut}(\mathbf{D}_n)$ acts continuously and transitively on $\mathcal{R}_n$ via the logic action: $g\cdot(\prec_1,\ldots,\prec_n)=(\prec_1^g,\ldots,\prec_n^g)$, where
    \[
        \mathbf{a}\prec_i^g\mathbf{b} \quad\iff\quad g^{-1}(\mathbf{a})\prec_i g^{-1}(\mathbf{b}),
    \]
    for all $i\leq n$, $\mathbf{a},\mathbf{b}\in D_n$, and $g\in\mathrm{Aut}(\mathbf{D}_n)$.
\end{lemma}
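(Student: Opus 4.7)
The plan is to verify three things in turn: (i) that the logic action is well-defined (each $g$-transform of a realizer is again a realizer, and the action axioms hold), (ii) that it is transitive, and (iii) that it is continuous. Part (i) is routine: since $g$ is an automorphism of $(D_n,<)$, each $\prec_i^g$ is a linear order, and
\[
\mathbf{a} < \mathbf{b} \iff g^{-1}(\mathbf{a}) < g^{-1}(\mathbf{b}) \iff g^{-1}(\mathbf{a}) \prec_i g^{-1}(\mathbf{b}) \text{ for all } i \iff \mathbf{a} \prec_i^g \mathbf{b} \text{ for all } i,
\]
so $(\prec_1^g,\ldots,\prec_n^g)$ realizes $<$. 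The identities $e \cdot r = r$ and $(gh)\cdot r = g\cdot(h\cdot r)$ follow from $(gh)^{-1} = h^{-1}g^{-1}$.

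For transitivity, Lemma \ref{lem:permute_realizers_D_n} says every realizer has the form $(<_{\sigma(1)},\ldots,<_{\sigma(n)})$ for a unique $\sigma \in S_n$. Given two such realizers indexed by $\sigma$ and $\tau$, set $\rho = \tau\sigma^{-1}$; it suffices to find $g \in \mathrm{Aut}(\mathbf{D}_n)$ with $g \cdot <_i = <_{\rho(i)}$ for every $i$, since then the $i$th component of $g \cdot (<_{\sigma(1)},\ldots,<_{\sigma(n)})$ is $<_{\rho(\sigma(i))} = <_{\tau(i)}$. To produce such a $g$, observe that the relabeled structure $(D_n,<,<_{\rho(1)},\ldots,<_{\rho(n)})$ still satisfies $\mathsf{DPO}_{n,<_1,\ldots,<_n}$: the set of realizing linear orders is unchanged, merely reindexed, and the density schema is symmetric in them. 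By Theorem \ref{thm:axiomatization}, this structure is isomorphic to $\mathbf{D}_{n,<_1,\ldots,<_n}$ via a bijection $g: D_n \to D_n$, which by construction preserves $<$ (so lies in $\mathrm{Aut}(\mathbf{D}_n)$) and satisfies $\mathbf{a} <_i \mathbf{b} \iff g(\mathbf{a}) <_{\rho(i)} g(\mathbf{b})$; a direct computation then yields $g \cdot <_i = <_{\rho(i)}$.

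For continuity, the key observation is that $|\mathcal{R}_n| = n!$ by Lemma \ref{lem:permute_realizers_D_n}, so $\mathcal{R}_n$ is discrete in any Hausdorff topology it inherits. Continuity of an action on a discrete set is equivalent to each stabilizer being open. For any $\sigma \in S_n$, the stabilizer of $(<_{\sigma(1)},\ldots,<_{\sigma(n)})$ coincides with $\mathrm{Aut}(\mathbf{D}_{n,<_1,\ldots,<_n})$ (since $\sigma$ is a permutation of indices); this subgroup is closed in $\mathrm{Aut}(\mathbf{D}_n)$ because preserving each $<_i$ is a closed condition in the pointwise-convergence topology, and has index at most $|\mathcal{R}_n| = n!$. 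A closed subgroup of finite index in a topological group is always open, so the stabilizer is open and the action is continuous.

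The main obstacle is transitivity, where the essential move is to reinterpret a permutation of the coordinate orders as presenting a new but axiomatically identical dense $n$-dimensional partial order with realizers, so that Theorem \ref{thm:axiomatization} furnishes the required automorphism of $\mathbf{D}_n$.
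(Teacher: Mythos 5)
Your proof is correct and follows essentially the same route as the paper: the heart of both arguments is that, by Lemma \ref{lem:permute_realizers_D_n}, a realizer is just a permutation of the coordinate orders, so the relabeled structure still satisfies $\mathsf{DPO}_{n,<_1,\ldots,<_n}$ and Theorem \ref{thm:axiomatization} supplies the automorphism of $\mathbf{D}_n$ witnessing transitivity. Your continuity argument (finite discrete orbit, closed finite-index stabilizer is open) merely fills in a step the paper dismisses as standard.
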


\begin{proof}
    That $\mathrm{Aut}(\mathbf{D}_n)$ acts continuously via the logic action on $n$-tuples of linear orders on $D_n$ is standard. We must verify, however, that it preserves realizers. Suppose $(\prec_1,\ldots,\prec_n)\in\mathcal{R}_n$ and $g\in\mathrm{Aut}(\mathbf{D}_n)$. Since $g$ preserves $<$, we have that $<\;\subseteq\;\prec_1^g\cap\cdots\cap\prec_n^g$. Suppose that $\mathbf{a}\prec_i^g\mathbf{b}$ for all $i\leq n$. Then, $g^{-1}(\mathbf{a})\prec_i g^{-1}(\mathbf{b})$ for all $i\leq n$, so $g^{-1}(\mathbf{a})< g^{-1}(\mathbf{b})$, but then, $\mathbf{a}<\mathbf{b}$, since $g^{-1}\in\mathrm{Aut}(\mathbf{D}_n)$. Thus, $<\;=\;\prec_1^g\cap\cdots\cap\prec_n^g$, and so $(\prec_1^g,\ldots,\prec_n^g)\in\mathcal{R}_n$.

    To see that this action is transitive, take $(\prec_1,\ldots,\prec_n)$ in $\mathcal{R}_n$. Since the truth of the axioms $\mathsf{DPO}_{n,<_1,\ldots,<_n}$ given in Section \ref{sec:axioms} is clearly invariant under permutations of the coordinate orders, it follows by Theorem \ref{thm:axiomatization} that $(D_n,<,\prec_1,\ldots,\prec_n)$ is isomorphic to $\mathbf{D}_{n,<_1,\ldots,<_n}$. This isomorphism is realized by some $g\in\mathrm{Aut}(\mathbf{D}_n)$ such that $g\cdot(<_1,\ldots,<_n)=(\prec_1,\ldots,\prec_n)$. Thus, the orbit of $(<_1,\ldots,<_n)$ is all of $\mathcal{R}_n$.
\end{proof}

\begin{theorem}\label{thm:UMF}
    The logic action of $\mathrm{Aut}(\mathbf{D}_n)$ on $\mathcal{R}_n$ is the universal minimal flow of $\mathrm{Aut}(\mathbf{D}_n)$.
\end{theorem}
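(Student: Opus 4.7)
The plan is to exploit the fact that $\mathcal{R}_n$ is a homogeneous $G$-space for $G = \mathrm{Aut}(\mathbf{D}_n)$, with point-stabilizer equal to the extremely amenable subgroup $H = \mathrm{Aut}(\mathbf{D}_{n,<_1,\ldots,<_n})$; this is precisely the setting in which the coset space $G/H$ serves as the universal minimal flow.

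First I would verify that $\mathcal{R}_n$ is itself a minimal $G$-flow. Lemma \ref{lem:permute_realizers_D_n} gives $|\mathcal{R}_n| = n!$, so $\mathcal{R}_n$ is finite and hence compact, and Lemma \ref{lem:logic_action_transitive} shows the logic action is continuous and transitive; transitivity on a nonempty space immediately yields minimality. It is then a direct calculation that the stabilizer of $(<_1,\ldots,<_n) \in \mathcal{R}_n$ is exactly $H$, since $g$ fixes $(<_1,\ldots,<_n)$ under the logic action iff $g^{-1}$ (equivalently $g$) preserves each $<_i$, so $\mathcal{R}_n \cong G/H$ as $G$-spaces.

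For the universal property, let $Y$ be any minimal $G$-flow. By Corollary \ref{cor:EA}, $H$ is extremely amenable, so when regarded as a subgroup of $G$ it fixes some point $y_0 \in Y$. Define $\phi : \mathcal{R}_n \to Y$ by
\[
    \phi\bigl(g \cdot (<_1,\ldots,<_n)\bigr) = g \cdot y_0,
\]
which is well-defined (if $g \cdot (<_1,\ldots,<_n) = g' \cdot (<_1,\ldots,<_n)$ then $g^{-1}g' \in H$ fixes $y_0$), clearly $G$-equivariant, and automatically continuous because $\mathcal{R}_n$ is discrete. Then $\phi(\mathcal{R}_n)$ is a nonempty compact $G$-invariant subset of $Y$, so minimality of $Y$ forces surjectivity. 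This produces the required $G$-equivariant continuous surjection from $\mathcal{R}_n$ onto an arbitrary minimal flow, witnessing universality.

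I do not expect a serious obstacle here, since the preceding lemmas do essentially all the work: they identify $\mathcal{R}_n$ as a finite $G$-orbit whose point-stabilizer is $\mathrm{Aut}(\mathbf{D}_{n,<_1,\ldots,<_n})$, and Corollary \ref{cor:EA} supplies the extreme amenability of the latter. The one minor care point is checking that $\mathcal{R}_n$ inherits the discrete topology from the ambient product topology on $n$-tuples of linear orders, which is immediate from the observation that any two distinct permutations of $(<_1,\ldots,<_n)$ can be distinguished by evaluating at a single pair of points of $D_n$ whose coordinates are pairwise distinct.
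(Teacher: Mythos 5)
Your proposal is correct and follows essentially the same route as the paper: establish minimality via Lemma \ref{lem:logic_action_transitive}, use extreme amenability of $H=\mathrm{Aut}(\mathbf{D}_{n,<_1,\ldots,<_n})$ (Corollary \ref{cor:EA}) to find an $H$-fixed point $y_0$ in an arbitrary minimal flow, and define the $G$-homomorphism by $g\cdot(<_1,\ldots,<_n)\mapsto g\cdot y_0$, with well-definedness coming from the stabilizer computation and surjectivity from minimality. Your explicit framing of $\mathcal{R}_n$ as the coset space $G/H$ is a harmless repackaging of the same argument.
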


\begin{proof}
    For brevity, let us write $G=\mathrm{Aut}(\mathbf{D}_n)$ and $H=\mathrm{Aut}(\mathbf{D}_{n,<_1,\ldots,<_n})$, and note that $H$ is a closed subgroup of $G$. By Lemma \ref{lem:logic_action_transitive}, the action of $G$ on $\mathcal{R}_n$ is a minimal flow. It remains to prove that it is universal. 
    
    Suppose that $X$ is a compact space on which $G$ acts minimally. By Corollary \ref{cor:EA}, $H$ is extremely amenable, so there is some $x_0\in X$ such that $h\cdot x_0=x_0$ for all $h\in H$. We will construct a (trivially continuous) $G$-homomorphism $\phi$ from $\mathcal{R}_n$ onto $X$ such that $\phi(<_1,\ldots,<_n)=x_0$.

    Consider the set
    \[
        \Phi=\{(g\cdot(<_1,\ldots,<_n),g\cdot x_0)\in\mathcal{R}_n\times X:g\in G\}.
    \]
    We claim that $\Phi$ is the graph of the desired function $\phi:\mathcal{R}_n\to X$. Clearly, by the transitivity of the action of $G$ on $\mathcal{R}_n$, the domain of $\Phi$ is all of $\mathcal{R}_n$. To see that $\Phi$ is a function, suppose that $g\cdot(<_1,\ldots,<_n)=h\cdot (<_1,\ldots,<_n)$, for some $g,h\in G$. Then, $h^{-1}g\in H$, so
    \[
        h\cdot x_0=h\cdot (h^{-1}g\cdot x_0)=g\cdot x_0.
    \]
    Thus, $\phi$ is a well-defined $G$-homomorphism, $\phi(<_1,\ldots,<_n)=x_0$, and since $G$ acts minimally on $X$, $\phi$ must map $\mathcal{R}_n$ onto $X$.
\end{proof}

The sets $D_n$ can be constructed to not only be dense and without colinear points, but also invariant under coordinate permutations. To do this, we can modify the proof of Proposition \ref{prop:D_n_exists} so that when choosing points $\mathbf{d}_m$ we avoid the additional finitely many hyperplanes of points which have more than one coordinate in common with themselves or any of the previously chosen points, and then add all coordinate permutations of $\mathbf{d}_m$ to the set being constructed. By Theorem \ref{thm:axiomatization}, the resulting $D_n'$ gives a structure $(D_n',<,<_1,\ldots,<_n)$ isomorphic to $\mathbf{D}_{n,<_1,\ldots,<_n}$, and in particular, $(D_n',<)$ is isomorphic to $\mathbf{D}_n$. Then, $S_n$ acts faithfully on $(D_n',<)$ by permuting coordinates, so we may identify $S_n$ with a subgroup of $G=\mathrm{Aut}(D_n',<)$. Through this identification, $S_n$ normalizes the subgroup $H=\mathrm{Aut}(D_n',<,<_1,\ldots,<_n)$ and $H\cap S_n=\{e\}$. Given $g\in G$, by Lemma \ref{lem:permute_realizers_D_n}, there is some $\sigma_g\in S_n$ such that 
\[
    (D_n',<,<_1^g,\ldots,<_n^g)=(D_n',<,<_{\sigma_g(1)},\ldots,<_{\sigma_g(n)}),
\]
thus $\sigma_g^{-1}g=h_g$ for some $h_g\in H$, so $g=\sigma_gh_g$, and this decomposition is unique. It follows that $G=H\rtimes S_n$. Lemma 5 in \cite{MR2948747} then implies that the action $g\cdot\sigma=\sigma_g\sigma$ of $G$ on $S_n$ is the universal minimal flow of $G$. Clearly, $S_n$ can be identified with $\mathcal{R}_n$ and this action will coincide with the logic action described in Theorem \ref{thm:UMF}, yielding an alternate proof of this result. We will follow a similar approach in determining the universal minimal flow of $\mathrm{Aut}(\mathbb{Q}^n,<)$ in Theorem \ref{thm:UMF_Qn} below.

\section{The rational grid, weak realizers, and full products}\label{sec:Qn}

In this final section, we will analyze the structure of the $n$-dimensional rational grid $\mathbf{Q}_n=(\mathbb{Q}^n,\leq)$ and a certain relevant expansion in a manner similar to what we have done for $\mathbf{D}_n$ and $\mathbf{D}_{n,<_1,\ldots,<_n}$. This analysis will be simpler in many ways, as it will take advantage of the explicit product structure of $\mathbf{Q}_n$. Note that we have moved from the strict product order $<$ on $\mathbb{Q}_n$ to the corresponding non-strict order $\leq$, and will do so for all partial orders in what follows.

To expand the structure $\mathbf{Q}_n$, and the corresponding class of finite substructures, we will utilize the notion of a \emph{weak realizer}, in direct analogy to what we have done for $\mathbf{D}_n$ and $\mathbf{D}_{n,<_1,\ldots,<_n}$. We say that a binary relation $\preceq$ on $L$, and the structure $(L,\preceq)$ as a whole, is a \emph{weak linear order} if it is reflexive, transitive, and total. Given this, the symmetrization of $\preceq$ is equivalence relation $\sim$ on $L$, where $a\sim b$ if and only if $a\preceq b$ and $b\preceq a$ for all $a,b\in L$, and $\preceq$ induces a linear ordering on the quotient $L/\!\sim$.

Given a partial order $(P,\leq)$ and weak linear orders $\leq_1,\ldots,\leq_n$ on $P$, we say that $\leq_1,\ldots,\leq_n$ \emph{realize} $\leq$ if $\leq \,=\,\leq_1\cap\cdots\cap\leq_n$, as before. We call the $n$-tuple $(\leq_1,\ldots,\leq_n)$ a \emph{weak realizer} for $\leq$. Notice that weak linear orders $\leq_1,\ldots,\leq_n$ on a set $P$ realize a partial order if and only if their intersection is antisymmetric, i.e., whenever $a\neq b$ in $P$, there is some $i\leq n$ such that neither $a\leq_i b$ nor $b\leq_i a$.

The most relevant examples of weak linear orders for us are the coordinate orders on a product of linear orders, which in turn form a weak realizer for the product order. We write $\mathbf{Q}_{n,\leq_1,\ldots,\leq_n}=(\mathbb{Q}^n,\leq,\leq_1,\ldots,\leq_n)$ for the expansion of $\mathbf{Q}_n$ by the coordinate orders $\leq_1,\ldots,\leq_n$.

Since every $n$-dimensional partial order embeds into an $n$-fold product of linear orders, every such partial order is realized by $n$ weak linear orders, namely those induced by the coordinate orders on the product. Conversely, if $(P,\leq)$ is realized by weak linear orders $\leq_1,\ldots,\leq_n$, and we let $\widetilde{P}_i=P/\!=_i$, where $=_i$ is the symmetrization of $\leq_i$, have the induced linear order, then it is easy to see that the natural map of $P$ into the product $\widetilde{P}_1\times\cdots\times\widetilde{P}_n$, formed by first using the diagonal embedding into $P^n$ and then taking the quotient map in each coordinate, is an embedding; the only thing to check is that this map is injective, but this follows from the fact that $\leq\,=\,\leq_1\cap\cdots\cap\leq_n$ is antisymmetric. In particular, the minimum size of a weak realizer for a partial order coincides with its dimension.

We call a structure $(Q,\leq,\leq_1,\ldots,\leq_n)$, where $(\leq_1,\ldots,\leq_n)$ is a weak realizer for the partial order $\leq$, an \emph{$n$-dimensional partial order with weak realizers}, and denote by $\mathcal{PO}_{n,\leq_1,\ldots,\leq_n}$ the corresponding \emph{class of all finite $n$-dimensional partial orders with weak realizers}.

The structures described above may be viewed as special cases of the full product construction, in the sense of \cite{MR3210656}, \cite{MR3497266}, and \cite{MR4723481}. In general, given structures $\mathbf{A}$ and $\mathbf{B}$, in possibly different relational signatures, their \emph{full product} is the structure $\mathbf{A}\boxtimes\mathbf{B}$ with domain $A\times B$, and whose signature consists of the union of the signatures of $\mathbf{A}$ and $\mathbf{B}$, together with two new binary relations $=_1$ and $=_2$, intepreted as follows: For all $(a,b),(c,d)\in A\times B$,
\[
    (a,b)R(c,d) \iff aRc,
\]
when $R$ is in the signature of $\mathbf{A}$,
\[
    (a,b)S(c,d) \iff bSd,
\]
when $S$ is in the signature of $\mathbf{B}$, and
\begin{align*}
    (a,b)&=_1(c,d) \iff a=c\\
    (a,b)&=_2(c,d) \iff b=d.
\end{align*}
Likewise, given classes of finite relational structures $\mathcal{K}_1$ and $\mathcal{K}_2$, their \emph{full product} is the class $\mathcal{K}_1\boxtimes\mathcal{K}_2$ of all finite structures which embed into $\mathbf{A}\boxtimes\mathbf{B}$, for some $\mathbf{A}\in\mathcal{K}_1$ and $\mathbf{B}_2\in\mathcal{K}_2$.

In the case of linear orders $(L,\leq)$ and $(K,\leq)$, their resulting full product is the structure $(L\times K,\leq_1,\leq_2,=_1,=_2)$, endowed with the coordinate orders $\leq_1$ and $\leq_2$, and coordinate identities $=_1$ and $=_2$. Iterating this operation, we may view the full product of linear orders $(L_1,\leq)$, ..., $(L_n,\leq)$ as 
\[
    (L_1\times\cdots\times L_n,\leq_1,\ldots,\leq_n,=_1,\ldots,=_n).
\]
Since the product order $\leq$ on $L_1\times\cdots\times L_n$ is definable from $\leq_1,\ldots,\leq_n$, we may add it to the signature, and since the coordinate identities $=_1,\ldots,=_n$ are definable from $\leq_1,\ldots,\leq_n$, we may omit them. So, we may view this structure as an $n$-dimensional partial order with weak realizers. Conversely, any $n$-dimensional partial order with weak realizers may be embedded into an $n$-fold full product of linear orders, as above. Thus, we may identify $\mathbf{Q}_{n,\leq_1,\ldots,\leq_n}$ with the full product of $(\mathbb{Q},\leq)$ with itself $n$ times, and the class $\mathcal{PO}_{n,\leq_1,\ldots,\leq_n}$ with the full product of the class $\mathcal{LO}$ of finite linear orders with itself $n$ times.

We can now apply the following structural results concerning full products to $\mathbf{Q}_{n,\leq_1,\ldots,\leq_n}$ and $\mathcal{PO}_{n,\leq_1,\ldots,\leq_n}$.

\begin{lemma}
    Suppose $\mathbf{A}$ and $\mathbf{B}$ are relational structures.
    \begin{enumerate}
        \item (Lemma 3.4(2) in \cite{MR4723481}) $\mathrm{Age}(\mathbf{A}\boxtimes\mathbf{B})=\mathrm{Age}(\mathbf{A})\boxtimes\mathrm{Age}(\mathbf{B})$.
        \item (Proposition 3.5(2) in \cite{MR4723481}) If $\mathrm{Age}(\mathbf{A})$ and $\mathrm{Age}(\mathbf{B})$ are Fra\"iss\'e classes with limits $\mathbf{A}$ and $\mathbf{B}$, respectively, then $\mathrm{Age}(\mathbf{A})\boxtimes\mathrm{Age}(\mathbf{B})$ is a Fra\"iss\'e class with limit $\mathbf{A}\boxtimes\mathbf{B}$. In particular, $\mathbf{A}\boxtimes\mathbf{B}$ is ultrahomogeneous.
        \item (Proposition 3.1 in \cite{MR3210656}) $\mathrm{Aut}(\mathbf{A}\boxtimes\mathbf{B})\cong\mathrm{Aut}(\mathbf{A})\times\mathrm{Aut}(\mathbf{B})$.
    \end{enumerate}
\end{lemma}

Consequently, we have:

\begin{theorem}\label{thm:Qn_Fraisse}
    $\mathbf{Q}_{n,\leq_1,\ldots,\leq_n}$ is ultrahomogeneous and its age is $\mathcal{PO}_{n,\leq_1,\ldots,\leq_n}$. Consequently, $\mathcal{PO}_{n,\leq_1,\ldots,\leq_n}$ is a Fra\"iss\'e class with limit $\mathbf{Q}_{n,\leq_1,\ldots,\leq_n}$.\qed
\end{theorem}

Note that if $(Q,\leq,\leq_1,\ldots,\leq_n)$ is a finite $n$-dimensional partial order with weak realizers, then (genuine) linear orders on $Q$ can be defined in terms of $\leq_1,\ldots,\leq_n$ similar to the lexicographic orders on a product of linear orders. In particular, we may view $\mathcal{PO}_{n,\leq_1,\ldots,\leq_n}$ as a Fra\"iss\'e order class.

\begin{theorem}
    $\mathrm{Aut}(\mathbf{Q}_{n,\leq_1,\ldots,\leq_n})\cong\mathrm{Aut}(\mathbb{Q},\leq)^n$.  Consequently, $\mathrm{Aut}(\mathbf{Q}_{n,\leq_1,\ldots,\leq_n})$ is extremely amenable and $\mathcal{PO}_{n,\leq_1,\ldots,\leq_n}$ has the Ramsey property.
\end{theorem}

\begin{proof}
     By Lemma 6.7 in \cite{MR2140630}, a product of extremely amenable groups is extremely amenable, so $\mathrm{Aut}(\mathbf{Q}_{n,\leq_1,\ldots,\leq_n})$ is extremely amenable, and thus by the reverse direction of the Kechris--Pestov--Todor\v{c}evi\'{c} correspondence (Theorem \ref{thm:KPT}), its age $\mathcal{PO}_{n,\leq_1,\ldots,\leq_n}$ has the Ramsey property.
\end{proof}

Note that, as was the case for $\mathcal{PO}_{n,<_1,\ldots,<_n}$, one can also show directly that $\mathcal{PO}_{n,\leq_1,\ldots,\leq_n}$ satisfies the Ramsey property using the Product Ramsey Theorem.

In \cite{MR0641492}, Manaster and Remmel gave a finite axiomatization of $\mathbf{Q}_n$, and we will give a finite axiomatization of $\mathbf{Q}_{n,\leq_1,\ldots,\leq_n}$ here. Much like the axioms $\mathsf{DLO}_{n,<_1,\ldots,<_n}$ above for $\mathbf{D}_{n,<_1,\ldots,<_n}$, the idea is to say that $\leq_1,\ldots,\leq_n$ are weak linear orders which realize the product order, and that every non-trivial region determined by finitely many points contains an additional point. The main difference between $\mathbf{D}_{n,<_1,\ldots,<_n}$ and $\mathbf{Q}_{n,\leq_1,\ldots,\leq_n}$ is that, in addition to open hypercubes, points in $\mathbf{Q}_{n,\leq_1,\ldots,\leq_n}$ may also determine open hyperplane segments and singletons, which are the open sides or corners (really, subcomplexes) of those open hypercubes. 

Given an $n$-dimensional partial order with weak realizers $(Q,\leq,\leq_1,\ldots,\leq_n)$, for each $i\leq n$, let $=_i$ be the symmetrization of $\leq_i$ and $<_i$ the corresponding strict order, both of which are clearly definable. A point $q\in Q$ \emph{determines} $3n$ \emph{regions} in $Q$, via the formulas (with free variable $u$):
\[
    u<_i q, \quad u=_iq, \quad\text{and}\quad q<_iu
\]
for $i\leq n$. More generally, finitely many points in $Q$ \emph{determine} finitely many \emph{regions} as intersections of the regions defined by each point. We call such regions \emph{non-trivial} if the corresponding region in $\mathbb{Q}^n$ is non-empty and, in the case of singletons, distinct from the finitely many points.

\begin{example}
Consider the five points $\mathbf{x}$, $\mathbf{y}$, $\mathbf{z}$, $\mathbf{w}$, and $\mathbf{v}$ lying in $\mathbb{Q}^2$ below. These five points determine $30$ open rectangles, $25$ open horizontal line segments, $24$ open vertical line segments, and $15$ singletons (distinct from $\mathbf{x}$, $\mathbf{y}$, $\mathbf{z}$, $\mathbf{w}$, and $\mathbf{v}$).

For example, the shaded open rectangle is described by the formula (in free variable $u$)
\[
    \mathbf{z}<_1u \wedge u<_1 \mathbf{w} \wedge \mathbf{w} <_2 u \wedge u<_2 \mathbf{x}.
\]
The open line segment, with left endpoint at $\mathbf{y}$, is described by
\[
    \mathbf{y} <_1 u \wedge u<_1 \mathbf{z} \wedge u=_2 \mathbf{y}.
\]
And the singleton, indicated by the grey node, is described by
\[
    u=_1 \mathbf{y} \wedge u=_2 \mathbf{x}.
\]
\begin{figure}[ht]
\scalebox{0.5}{
\begin{tikzpicture}
    \draw[very thick][<->] (0,8) -- (0,-2);
    \draw[very thick][<->] (8,0) -- (-2,0);
    
    \tikzstyle{node_style}=[shape=circle,minimum size=0.01cm,very thin, draw=black, fill=black]
    \tikzstyle{node_outline}=[shape=circle,minimum size=0.5cm,very thin, draw=black]
    \tikzstyle{nodegray}=[shape=circle,minimum size=0.01cm,very thin, draw=gray, fill=gray]
    \tikzstyle{nodecross}=[shape=cross,minimum size=0.01cm,very thin, draw=black, fill=dark gray]
    
    \newcounter{p}
    \newcounter{q}
    \foreach \p in {3,1,4,6}{
    \draw[thick, lightgray] (\p,8) -- (\p,-2);
    }
    \foreach \q in {1,2,3,5,6}{
    \draw[thick, lightgray] (8,\q) -- (-2,\q);
    }
    \draw [very thick,dotted, fill=lightgray] (4,3) rectangle (6,5);
    \draw [line width=2 pt,gray] (i1) -- (4,2); 

    \node[node_style] (i1) at (3,2) {};
    \node[below left=3 pt of {(3,2)}, outer sep=3pt] {\LARGE$\mathbf{y}$}; 
    \node[node_style] (i2) at (1,5) {};
    \node[below left=3 pt of {(1,5)}, outer sep=3pt] {\LARGE$\mathbf{x}$}; 
    \node[node_style] (i3) at (4,6) {};
    \node[below left=3 pt of {(4,6)}, outer sep=3pt] {\LARGE$\mathbf{z}$}; 
    \node[node_style] (r) at (6,3) {};
    \node[below left=3 pt of {(6,3)}, outer sep=3pt] {\LARGE$\mathbf{w}$}; 
    \node[node_style] (s) at (6,1) {};
    \node[below left=3 pt of {(6,1)}, outer sep=3pt] {\LARGE$\mathbf{v}$};
    \node[nodegray] (u) at (3,5) {};
\end{tikzpicture}}
\caption{Five points and the regions they determine in $\mathbf{Q}_{2,\leq_1,\leq_2}$.}
\end{figure}
\end{example}

The following axioms for a structure of the form $(Q,\leq,\leq_1,\ldots,\leq_n)$ say that $\leq$ is a partial order, $\leq_1,\ldots,\leq_n$ are weak linear orders which realize $\leq$, and every non-trivial region determined by finitely many points in $Q$ contains a point. Since such regions in $\mathbb{Q}^n$ are (possibly unbounded) $n$-dimensional open hypercubes and their open sides or corners, they are again uniquely determined by at most $2^n$ points.

\begin{definition}
    The theory $\mathsf{DLO}_{n,\leq_1,\ldots,\leq_n}$ of \emph{dense $n$-dimensional lattice orders with weak realizers} consists of the following sentences:
    \begin{itemize}
        \item $\leq$ is a (non-strict) partial ordering:
        \begin{align*}
            &\forall x(x\leq x)\\
            &\forall x\forall y(x\leq y\wedge y\leq x \rightarrow x=y)\\
            &\forall x\forall y\forall z(x\leq y\wedge y\leq z \rightarrow x\leq z).
        \end{align*}
        \item Each of $\leq_1,\ldots,\leq_n$ is a weak linear ordering:
        \begin{align*}
            &\forall x(x\leq_i x)\\
            &\forall x\forall y\forall z(x\leq_iy\wedge y\leq_i z \rightarrow x\leq_i z)\\
            &\forall x\forall y(x\leq_i y \vee y\leq_i x)
        \end{align*}
        for $i\leq n$.
        \item $\leq_1,\ldots,\leq_n$ realize $\leq$:
        \begin{align*}
            \forall x\forall y(x\leq y \leftrightarrow (x\leq_1y\wedge\cdots\wedge x\leq_n y)).
        \end{align*}
        \item $\leq_1,\ldots,\leq_n$ are dense and unbounded:
        \begin{align*}
            \forall x_1\cdots\forall x_{2^n} 
            \left[\bigwedge_{i=1}^{\mathrm{N}(n)}\left(\theta_i(x_1,\ldots,x_{2^n})\rightarrow
            \bigwedge_{j=1}^{s(i)}\rho^i_j(x_1,\ldots,x_{2^n})\right)\right],
        \end{align*}
        where:
        \begin{itemize}
            \item  $\mathrm{N}(n)$ is the number of possible configurations of $\leq 2^n$ many points with respect to $<_1,\ldots,<_n$ and $=_1,\ldots,=_n$, 
            \item for each $i\leq\mathrm{N}(n)$, $\theta_i(x_1,\ldots,x_{2^n})$ asserts that $x_1,\ldots,x_{2^n}$ are in the $i$th such configuration (for configurations of fewer than $2^n$ points, we may assert that some of the $x_k$ are equal),
            \item $s(i)$ is the number of non-trivial regions determined by $x_1,\ldots,x_{2^n}$ in the $i$th configuration, and
            \item for each $j\leq s(i)$, $\rho^i_j(x_1,\ldots,x_{2^n})$ asserts the existence of a point the $j$th non-trivial region determined by $x_1,\ldots,x_{2^n}$. 
        \end{itemize}
    \end{itemize}
\end{definition}

We note that any such structure $(Q,\leq,\leq_1,\ldots,\leq_n)$ is necessarily a lattice: Given $x,y\in Q$, since each $\leq_i$ is total, we can define the minimum $\min_i\{x,y\}$ and maximum $\max_i\{x,y\}$ with respect to $\leq_i$, where if $x=_iy$, we may choose either $x$ or $y$. Then, if we let $u$ witness the formula
\[
    \exists u(u=_1\min{\!}_1\{x,y\} \wedge \cdots \wedge u=_n\min{\!}_n\{x,y\}),
\]
and $v$ witness the formula
\[
    \exists v(v=_1\max{\!}_1\{x,y\} \wedge \cdots \wedge u=_n\max{\!}_n\{x,y\}),
\]
it follows that $u$ is the meet, and $v$ the join, of $x$ and $y$ with respect to $\leq$. 

Clearly, $\mathbf{Q}_{n,\leq_1,\ldots,\leq_n}$ satisfies $\mathsf{DLO}_{n,\leq_1,\ldots,\leq_n}$. To see that $\mathbf{Q}_{n,\leq_1,\ldots,\leq_n}$ is the only countable model of these axioms up to isomorphism, one may proceed by a back-and-forth argument exactly as in the proof of Theorem \ref{thm:axiomatization} whose details we leave to the reader.

\begin{theorem}
    $\mathbf{Q}_{n,\leq_1,\ldots,\leq_n}$ is the unique countable dense $n$-dimensional lattice order with weak realizers, up to isomorphism.\qed
\end{theorem}

Finally, we consider automorphisms of the reduct $\mathbf{Q}_n=(\mathbb{Q}^n,\leq)$. Recall that, by (the proof of) Proposition \ref{prop:D_n_not_hom} and Corollary \ref{cor:D_n_2-hom}, $\mathbf{D}_n$ is $2$-homogeneous but not $3$-homogeneous. 

\begin{proposition}
    $\mathbf{Q}_n$ is $1$-homogeneous but not $2$-homogeneous.
\end{proposition}

\begin{proof}
    That $\mathbf{Q}_n$ is $1$-homogeneous, namely that any two points can be mapped to each other by an automorphism, follows immediately from Theorem \ref{thm:Qn_Fraisse}. To see that it is not $2$-homogeneous, consider points $\mathbf{a}$, $\mathbf{b}$, $\mathbf{x}$, $\mathbf{y}$ in $\mathbb{Q}^2$ such that 
    \[
        a_1 <x_1<y_1< b_1 \text{ and } a_i <y_i<x_i< b_i,
    \]
    for $2\leq i\leq n$, so then $\mathbf{a} \leq \mathbf{x} \leq \mathbf{b}$, $\mathbf{a} \leq \mathbf{y} \leq \mathbf{b}$, and $\mathbf{x},\mathbf{y}$ are incomparable.

    \begin{figure}[ht]
    \scalebox{.75}{
    \centering
    \begin{tikzpicture}
        \tikzstyle{node_style}=[shape=circle,minimum size=0.01cm,very thin, draw=black, fill=black]
        \tikzstyle{red}=[shape=circle,minimum size=0.01cm,very thin, draw=gray, fill=gray]
        \draw[->] (0,0) -- (0,5);
        \draw[->] (0,0) -- (5,0);
        \draw[->] (8,0) -- (13,0);
        \draw[->] (8,0) -- (8,5);

        \node[node_style] (a) at (1,1) {};
        \node[left=3 pt of {(1,1)}, outer sep=3pt,fill=white] {$\mathbf{a}$};        
        \node[node_style] (b) at (4,4) {};
        \node[left=3 pt of {(4,4)}, outer sep=3pt,fill=white] {$\mathbf{b}$};
        \node[red] (x) at (2,3) {};
        \node[left=3 pt of {(2,3)}, outer sep=3pt,fill=white] {$\mathbf{x}$}; 
        \node[red] (y) at (3,2) {};
        \node[left=3 pt of {(3,2)}, outer sep=3pt,fill=white] {$\mathbf{y}$}; 

        \node[node_style] (d) at (11,1) {};
        \node[right=3 pt of {(11,1)}, outer sep=3pt,fill=white] {$\mathbf{a}'$};
        \node[node_style] (e) at (11,4) {};
        \node[right=3 pt of {(11,4)}, outer sep=3pt,fill=white] {$\mathbf{b}'$};
        \node[red] (f) at (11,3) {};
        \node[right=3 pt of {(11,3)}, outer sep=3pt,fill=white] {$\mathbf{x}'$};
        \node[red] (g) at (11,2) {};
        \node[right=3 pt of {(11,2)}, outer sep=3pt,fill=white] {$\mathbf{y}'$};

        \draw[thin,->] (a) -- (d);
        \draw[thin,->] (b) -- (e);
        \draw[thin,dotted,->] (x) -- (f);
        \draw[thin,dotted,->] (y) -- (g);
    \end{tikzpicture}}
    \caption{$\mathbf{Q}_2$ is not $2$-homogeneous.}
    \end{figure}
    
    Take points $\mathbf{a}'$, $\mathbf{b}'$ in $\mathbb{Q}^2$ such that 
    \[
        a'_1 = b'_1 \text{ and } a'_i < b'_i
    \]
    for $2\leq i\leq n$. The mapping $f$ given by $f(\mathbf{a})=\mathbf{a}'$ and $f(\mathbf{b})=\mathbf{b}'$ is then an isomorphism from $(\{\mathbf{a},\mathbf{b}\},\leq)$ to $(\{\mathbf{a}',\mathbf{b}'\},\leq)$. But, there is no way to extend $f$ to an automorphism $g$ on all of $\mathbb{Q}^n$. To see this, note that if $g$ was such an automorphism, say with $g(\mathbf{x})=\mathbf{x}'$ and $g(\mathbf{y})=\mathbf{y}'$, then we must have that $\mathbf{a}'\leq \mathbf{x}'$ and $\mathbf{y}'\leq\mathbf{b}$, but since $a_1'=b_1'$, this will imply that $a_1'\leq x_1'$ and $y_1'\leq b_1'$, and thus, $\mathbf{x}'$ and $\mathbf{y}'$ must be comparable. This contradicts that $\mathbf{x}$ and $\mathbf{y}$ are incomparable, and $g$ is an embedding.
\end{proof}

In \cite{MR0641492}, Manaster and Remmel defined $\mathbf{b}_1,\ldots,\mathbf{b}_n$ in $\mathbb{Q}^n$ to be a \emph{basis} if they are pairwise incomparable and have pairwise common meet; this property is clearly expressible in terms of the product order, and thus is preserved by any automorphism of $\mathbf{Q}_n$. We will say that a basis $\mathbf{b}_1,\ldots,\mathbf{b}_n$ such that
\begin{align*}
    \mathbf{b}_{1}&=(X_1,x_2,\ldots,x_n)\\
    \mathbf{b}_{2}&=(x_1,X_2,\ldots,x_n)\\
    \vdots&\\
    \mathbf{b}_{n}&=(x_1,x_2,\ldots,X_n),
\end{align*}
where $x_i<X_i$ for all $i\leq n$, is in \emph{standard position}. For example, the standard basis $\mathbf{e}_1=(1,0,\ldots,0), \mathbf{e}_2=(0,1,\ldots,0), \ldots, \mathbf{e}_n=(0,0,\ldots,1)$ is in standard position. The lemma on p.~795 of \cite{MR0641492} shows that any basis is a permutation of one in standard position; we call this permutation the \emph{type} of the basis and view $S_n$ as the set of \emph{basis types}. We also consider $S_n$ as a subgroup of $\mathrm{Aut}(\mathbf{Q}_n)$ via its action permuting coordinates.

Given $g\in\mathrm{Aut}(\mathbf{Q}_n)$, let $\sigma_g$ be the type of the image of the standard basis under $g$, that is, the unique element of $S_n$ such that $\sigma_g^{-1}(g(\mathbf{e}_1)),\ldots,\sigma_g^{-1}(g(\mathbf{e}_n))$ is in standard position.

It is shown on pp.~795--796 of \cite{MR0641492} that for each $i\leq n$, there is a formula in the language of partial orders which, given \emph{any} basis of $\mathbf{Q}_n$ in standard position, defines the $i$th coordinate order $\leq_i$. Consequently, any element of $\mathrm{Aut}(\mathbf{Q}_n)$ which maps the standard basis to another basis in standard position must preserve coordinates, i.e., is an element of $\mathrm{Aut}(\mathbf{Q}_{n,\leq_1,\ldots,\leq_n})$. In particular, for any $g\in\mathrm{Aut}(\mathbf{Q}_n)$, $\sigma_g^{-1}g\in\mathrm{Aut}(\mathbf{Q}_{n,\leq_1,\ldots,\leq_n})$. Note that elements of $S_n$ clearly commute with those of $\mathrm{Aut}(\mathbf{Q}_{n,\leq_1,\ldots,\leq_n})$.

\begin{lemma}
    The mapping $\mathrm{Aut}(\mathbf{Q}_n)\to S_n$ given by $g\mapsto \sigma_g$ is a continuous homomorphism with kernel $\mathrm{Aut}(\mathbf{Q}_{n,\leq_1,\ldots,\leq_n})$.
\end{lemma}

\begin{proof}
    Let $g,h\in\mathrm{Aut}(\mathbf{Q}_n)$. Then, for each $i\leq n$,
    \begin{align*}
        (\sigma_g\sigma_h)^{-1}(gh(\mathbf{e}_i))&=\sigma_h^{-1}\sigma_g^{-1}gh(\mathbf{e}_i)\\
        &=\sigma_g^{-1}g(\sigma_h^{-1}h(\mathbf{e}_i))
    \end{align*}
    since $\sigma_h^{-1}$ commutes with $\sigma_g^{-1}g$. As both $\sigma_h^{-1}h$ and $\sigma_g^{-1}g$ preserve coordinates, $\sigma_g^{-1}g(\sigma_h^{-1}h(\mathbf{e}_1)),\ldots,\sigma_g^{-1}g(\sigma_h^{-1}h(\mathbf{e}_n))$ must be in standard position. Thus, $\sigma_g\sigma_h=\sigma_{gh}$. Clearly, the elements in the kernel of this mapping are exactly those which map the standard basis to a basis in standard position, thus preserve coordinates. To see that this mapping is continuous, note that whether $\sigma_g=\sigma\in S_n$ is determined by the image of finitely many points in $\mathbb{Q}^n$ under $g$, and thus there is a clopen set of such $g\in\mathrm{Aut}(\mathbf{Q}_n)$.
\end{proof}

We can now describe the universal minimal flow of $\mathrm{Aut}(\mathbf{Q}_n)$.

\begin{theorem}\label{thm:UMF_Qn}
    $\mathrm{Aut}(\mathbf{Q}_n)=\mathrm{Aut}(\mathbf{Q}_{n,\leq_1,\ldots,\leq_n})\rtimes S_n$. Consequently, the universal minimal flow of $\mathrm{Aut}(\mathbf{Q}_n)$ is its natural action on basis types in $S_n$.
\end{theorem}

\begin{proof}
    We have already pointed out that $\mathrm{Aut}(\mathbf{Q}_{n,\leq_1,\ldots,\leq_n})$ is the kernel of a homomorphism, thus is normal in $\mathrm{Aut}(\mathbf{Q}_n)$, and clearly has trivial intersection with $S_n$. For any $g\in\mathrm{Aut}(\mathbf{Q}_n)$, $g\sigma_g^{-1}=(\sigma_gg^{-1})^{-1}=h\in\mathrm{Aut}(\mathbf{Q}_{n,\leq_1,\ldots,\leq_n})$, so $g=h\sigma_g$. Hence, $\mathrm{Aut}(\mathbf{Q}_n)=\mathrm{Aut}(\mathbf{Q}_{n,\leq_1,\ldots,\leq_n})\rtimes S_n$. The remainder follows by Lemma 5 in \cite{MR2948747}.
\end{proof}

Finally, note that, in analogy to our results for $\mathrm{Aut}(\mathbf{D}_n)$, we could have characterized the universal minimal flow of $\mathrm{Aut}(\mathbf{Q}_n)$ as its action on weak realizers, which are nothing more than permutations of the coordinate orders, using the fact that they can be defined from a basis. This amounts to the same thing as the action on basis types, which seems like a more natural characterization given the results in \cite{MR0641492}.


\bibliographystyle{abbrv}
\bibliography{posets.bib}

\end{document}